\documentclass[11pt]{article}
\usepackage{amssymb,amsmath,amsthm,amsfonts,verbatim}
\usepackage[all,cmtip]{xy}
\usepackage{hyperref}
\usepackage[top=1.2in,bottom=1.2in,left=1.21in,right=1.21in]{geometry}
\usepackage{todonotes}
\usepackage{qtree}

\usepackage{extarrows}
\usepackage[all,cmtip]{xy}

\usepackage{tikz-cd}
\tikzset{
	commutative diagrams/.cd, 
	arrow style=tikz, 
	diagrams={>=stealth}
}

\newtheorem{thm}{Theorem}[section]
\newtheorem{prop}[thm]{Proposition}
\newtheorem{cor}[thm]{Corollary}
\newtheorem{lem}[thm]{Lemma}
\newtheorem{fact}[thm]{Fact}

\newtheorem{question}[thm]{Question}
\newtheorem{claim}{Claim}

\newtheorem{thmx}{Theorem}

\newtheorem{definition}[thm]{Definition}

\theoremstyle{definition}

\newtheorem{example}[thm]{Example}
\newtheorem{problem}[thm]{Problem}

\numberwithin{equation}{section}

\theoremstyle{remark}
\newtheorem{remark}[thm]{Remark}
\newtheorem{note}[thm]{Note}

\newcommand{\R}{\mathbb{R}}  % The real numbers.
\newcommand{\C}{\mathbb{C}}  % The complex numbers.
  % Affine space.
\newcommand{\Proj}{\mathbb{P}}  % Projective space.
\newcommand{\N}{\mathbb{N}}  % The natural numbers.
\newcommand{\Z}{\mathbb{Z}}  % The integers.
\newcommand{\Q}{\mathbb{Q}}  % The rational numbers.
\newcommand{\F}{\mathbb{F}}  % Arbitrary field.

%%% Diagrams

%%% 1 -- 5 --> 2    1 -- 4 --> 2
%%% |          |    |        /
%%% 6          7    5    6
%%% |          |    | /
%%% 3 -- 8 --> 4    2

% Commutative Square macro. Order of input (1,1) entry, (1,2) entry, (2,1) entry, (2,2) entry, top arrow, left arrow, right arrow, bottom arrow

\newcommand{\squareDiagram}[8]{\begin{equation*}
			\begin{tikzcd}[sep=2.5em, ampersand replacement=\&]
				{#1} \arrow[r, "{#5}"] \arrow[d, swap, "{#6}"] \& {#2} \arrow[d, "{#7}"] \\ 
				{#3} \arrow[r, swap, "{#8}"] \& {#4}
			\end{tikzcd}
		\end{equation*}}

% Commutative Triangle (a) macro. Order of input (1,1) entry, (1,2) entry, (2,1) entry, top arrow, left arrow, diagonal arrow

% Cone-top diagram (a) macro. Order of input (1,1) entry, (1,2) entry, (2,1) entry, top arrow, left arrow, diagonal arrow

% Flipped Commutative Square macro. Order of input (1,1) entry, (1,2) entry, (2,1) entry, (2,2) entry, top arrow, left arrow, right arrow, bottom arrow

% Flipped Commutative Tirangle (a) macro. Order of input (1,1) entry, (1,2) entry, (2,1) entry, top arrow, left arrow, diagonal arrow

%%%

%%
%% The comment after the defintion is not required, but if you are
%% working with someone they will likely thank you for explaining your
%% definition.  
%%
%% Now add you own definitions:
%%

\newcommand{\mor}[1]{\ensuremath{\overset{#1}{\longrightarrow}}} 

%%% The next line is used to get a subscript directly underneath a small \oplus
\newcommand{\topoplus}[1][{}]{\mathbin{\underset{\substack{#1}}{\oplus}}}
\newcommand{\topotimes}[1][{}]{\mathbin{\underset{\substack{#1}}{\otimes}}}
\newcommand{\PO}[2]{\operatorname{PO}\binom{#1}{#2}}

%%%
%%% Mathematical operators (things like sin and cos which are used as
%%% functions and have slightly different spacing when typeset than
%%% variables are defined as follows:
%%%

 % The distance.
\DeclareMathOperator{\Ho}{H} % The (co)Homology.
 % The space of polynomials.
 % The codimension function.
 % Endomorphism ring.
\DeclareMathOperator{\Hom}{Hom} % Hom-sets.
\DeclareMathOperator{\op}{op} % Opposite category.
 % Frobenius automorphism.
\DeclareMathOperator{\Tr}{Tr} % Trace operator.
\DeclareMathOperator{\Ind}{Ind} % Induced representation.
\DeclareMathOperator{\Res}{Res} % Induced representation.
 % Trivial rep.
 % Stabilizer subgroup.
 % Identity morphism.
\DeclareMathOperator{\Aut}{Aut} % Identity morphism.

 % Identity morphism.
\DeclareMathOperator{\Rat}{Rat} % Identity morphism.
\DeclareMathOperator{\PConf}{PConf} % Identity morphism.
\DeclareMathOperator{\PRat}{PRat} % rest.
\DeclareMathOperator{\Char}{Char} % rest.
\DeclareMathOperator{\ev}{ev} % rest.
 % rest.

%%%
%%% Category names and definitions
%%%
\newcommand{\catname}[1]{{\normalfont\operatorname{#1}}}
\newcommand{\FI}{\catname{FI}}
\newcommand{\VI}{\catname{VI}}

\newcommand{\Set}{\catname{Set}}
\newcommand{\CCat}{{\operatorname{\textbf{C}}}}

\newcommand{\Vect}{\catname{Vect}}
\newcommand{\Mod}[1]{\catname{Mod}_{#1}}

\renewcommand{\emptyset}{\varnothing}

\begin{document}

%%
%% The title of the paper goes here.  Edit to your title.
%%

\title{Categories of FI type: a unified approach to generalizing representation stability and character polynomials}

%%
%% Now edit the following to give your name and address:
%% 

\author{Nir Gadish}
%\address{Department of Mathematics, University of Chicago, Chicago, Il 29208}
%\email{nirg@math.uchicago.edu}
%\urladdr{www.math.sc.edu/$\sim$howard} % Delete if not wanted.

%%
%% If there is another author uncomment and edit the following.
%%

%\author{Second Author}
%\address{Department of Mathematics, University of South Carolina,
%Columbia, SC 29208}
%\email{second@math.sc.edu}
%\urladdr{www.math.sc.edu/$\sim$second}

%%
%% If there are three of more authors they are added in the obvious
%% way. 
%%

%%
%%  LaTeX will not make the title for the paper unless told to do so.
%%  This is done by uncommenting the following.
%%

\maketitle

\begin{abstract}
Representation stability is a theory describing a way in which a sequence of representations of different groups is related, and essentially contains a finite amount of information. Starting with Church-Ellenberg-Farb's theory of $\FI$-modules describing sequences of representations of the symmetric groups, we now have good theories for describing representations of other collections of groups such as finite general linear groups, classical Weyl groups, and Wreath products $S_n\wr G$ for a fixed finite group $G$. This paper attempts to uncover the mechanism that makes the various examples work, and offers an axiomatic approach that generates the essentials of such a theory: character polynomials and free modules that exhibit stabilization.

We give sufficient conditions on a category $\CCat$ to admit such structure via the notion of categories of $\FI$ type. This class of categories includes the examples listed above, and extends further to new types of categories such as the categorical power $\FI^m$, whose modules encode sequences of representations of $m$-fold products of symmetric groups. The theory is applied in \cite{Ga} to give homological and arithmetic stability theorems for various moduli spaces, e.g. the moduli space of degree $n$ rational maps $\Proj^1\rightarrow \Proj^m$.
\end{abstract}

%%
%% LaTeX can automatically make a table of contents.  This is done by
%% uncommenting the following:
%%

%\tableofcontents

%%
%%  To enter text is easy.  Just type it.  A blank line starts a new
%%  paragraph. 
%%

\section{Introduction}
The purpose of this paper is to describe a categorical structure that is responsible for the existence of representation stability phenomena. Our approach is centered around free modules\footnote{These are commonly called \#-modules in the context of the category $\FI$.} and character polynomials (defined below). We show that our proposed categorical structure gives rise to free modules which satisfy the fundamental properties that produce representation stability, and in particular the Noetherian property. We take an axiomatic approach that applies in a broad context, generalizing many of the known examples.

\subsection{Motivation}
Let $\FI$ be the category of finite sets and injections. An \emph{$\FI$-module} is a functor from $\FI$ to the category of modules over some fixed ring $R$. An $\FI$-module $M_\bullet$ is a single object that packages together a sequence of representations of the symmetric groups $S_n$ for every $n\in \N$ (see e.g. \cite{CEF}). Objects of this form arise naturally in topology and representation theory, for example:
\begin{itemize}
\item Cohomology of configuration spaces $\{\PConf^n(X)\}_{n\in \N}$ for a manifold $X$.
\item Diagonal coinvariant algebras $\{ \Q[x_1,\ldots,x_n,y_1,\ldots,y_n]/\mathcal{I}_n \}_{n\in N}$ (see \cite{CEF}).
\end{itemize}

A fundamental result of Church-Ellenberg-Farb \cite{CEF} is that an $\FI$-module over $\Q$ is finitely-generated, i.e. there exists a finite set of elements not contained in any proper submodule, if and only if the sequence of $S_n$-representations stabilizes in a precise sense (see \cite{CEF} for details). This phenomenon was named \emph{representation stability}. In particular, if one defines class functions
$$
X_k(\sigma) = \# \text{ of $k$-cycles in } \sigma
$$
simultaneously on all $S_n$, then \cite{CEF} show that for every finitely-generated $\FI$-module $M_\bullet$ then there exists a single polynomial $P\in \Q[X_1,X_2,\ldots]$ -- a \emph{character polynomial} -- that describes the characters of the $S_n$-representations $M_n$ independent of $n$ for all $n\gg1$.

The uniform description of the characters in terms of a single character polynomial accounts for the most direct applications of the theory, for example:
\begin{itemize}
\item For every manifold $X$ and $i\geq 0$, the dimensions of $\{\Ho^i(\PConf^n(X);\Q)\}_{n\in \N}$ are given by a single polynomial in $n$ for all $n\gg1$.\footnote{See \cite{Church-manifolds}.}
\item Every polynomial statistic, regarding the irreducible decomposition of degree-$n$ polynomials over $\F_q$, tends to an asymptotic limit as $n\rightarrow\infty$.\footnote{See \cite{CEF-pointcounts}.}
\end{itemize}
However, the above logic could be reversed: as first suggested by Gan-Li in \cite{GL-coinduction}, Nagpal showed in \cite[Theorem A]{Na} that if $M_\bullet$ is a finitely generated $\FI$-module, then in some range $n\gg 1$ it admits a finite resolution by free $\FI$-modules (see below) and these have characters given by character polynomials. It follows that for every $n \gg 1$ the character of the $M_{n}$ is itself given by a character polynomial. One can then get stabilization of the decomposition of $M_n$ into irreducible representations as a corollary of this fact! We assert that the key property of character polynomials -- responsible for all representation stability phenomena and applications -- is the following.
\begin{fact}[{\cite[Theorem 3.9]{CEF-pointcounts}}]\label{fact:stabilization}
The $S_n$-inner product of two character polynomials $P$ and $Q$ becomes independent of $n$ for all $n\geq \deg(P)+\deg(Q)$.
\end{fact}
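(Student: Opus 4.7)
The plan is to reduce the inner product computation to evaluating expectations of a convenient graded basis, and then to show that each basis expectation stabilizes at the expected point.

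First I would introduce the basis $\bigl\{\prod_{i}\binom{X_i}{a_i}\bigr\}$ of $\Q[X_1,X_2,\ldots]$ and grade it by declaring $\deg\binom{X_i}{a_i} = i\cdot a_i$. A quick check via Vandermonde-type identities shows that the product $\binom{X_i}{a}\binom{X_i}{b}$ expands in the basis with terms of degree at most $i(a+b)$, so this grading is compatible with multiplication and agrees with the usual degree on the generators $X_i$. In particular, if $\deg P = d_P$ and $\deg Q = d_Q$, then $P\cdot \overline{Q}$ is a $\Q$-linear combination of basis elements of degree at most $d_P + d_Q$.

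The key lemma, which I would prove next, is the stabilization of basis expectations:
\begin{equation*}
\mathbb{E}_{S_n}\!\left[\prod_{i}\binom{X_i}{a_i}\right] \;=\; \prod_{i}\frac{1}{a_i!\cdot i^{a_i}} \qquad \text{for all } n\geq \sum_i i\cdot a_i.
\end{equation*}
The proof is a standard double-counting. Let $m = \sum_i i\cdot a_i$. The integer $\prod_i\binom{X_i(\sigma)}{a_i}$ counts tuples $(T_i)_i$ of disjoint cycles of $\sigma$ with $|T_i|=a_i$ and each cycle in $T_i$ of length $i$. Swapping the order of summation, one counts pairs (tuple of disjoint cycles on $\{1,\ldots,n\}$, extension to a permutation $\sigma\in S_n$). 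When $n\geq m$ the chosen cycles can always be extended, and the number of extensions is $(n-m)!$; combined with the standard count of disjoint-cycle configurations, the sum telescopes to $n!\cdot\prod_i\frac{1}{a_i!\,i^{a_i}}$, and dividing by $n!$ yields the formula.

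Finally, bilinearity of $\langle -,- \rangle_{S_n} = \mathbb{E}_{S_n}[-\,\overline{(\cdot)}]$ combined with the expansion of $P\cdot\overline{Q}$ in the basis shows that, once $n\geq d_P + d_Q$, every basis element appearing in $P\overline{Q}$ has already reached its stable expectation, and so $\langle P,Q\rangle_{S_n}$ equals a fixed $\Q$-linear combination of the numbers $\prod_i\frac{1}{a_i!\,i^{a_i}}$, independent of $n$.

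The main obstacle, which I expect is mild, is verifying that multiplication really does respect the degree filtration on the basis $\binom{X_i}{a}\binom{X_i}{b}$; once this combinatorial identity is confirmed, the additivity $\deg(P\overline{Q})\leq d_P+d_Q$ is automatic and the stabilization threshold $d_P+d_Q$ drops out of the double-counting argument without any further analysis.
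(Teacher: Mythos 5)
Your argument is correct, and it is genuinely different from the route the paper takes. What you give is essentially the direct calculation of \cite[Theorem 3.9]{CEF-pointcounts}, which the paper cites rather than reproves for $\FI$: you expand $P\overline{Q}$ in the filtered basis $\prod_i\binom{X_i}{a_i}$ (your worry about compatibility of the grading with products is harmless, since this basis is unitriangular with respect to the monomial filtration with $\deg X_i=i$, so any polynomial of weighted degree $\le D$ expands with basis terms of degree $\le D$), and you establish by double counting that $\mathbb{E}_{S_n}\bigl[\prod_i\binom{X_i}{a_i}\bigr]=\prod_i \frac{1}{a_i!\, i^{a_i}}$ once $n\ge\sum_i i\,a_i$; both steps check out, and bilinearity then gives stabilization at $n\ge\deg P+\deg Q$. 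The paper instead obtains Fact \ref{fact:stabilization} as the $\CCat=\FI$ instance of its general machinery: character polynomials are characters of virtual free $\CCat$-modules (Theorem \ref{thm:categorification}), tensor products of free modules are free with additive degree (Theorem \ref{thm:intro-free-modules}(1), proved via the set-theoretic analysis of weak push-outs in Lemma \ref{lem:products_set_version}), duals are free of the same degree, and coinvariants of a free module of degree $\le c$ stabilize for objects $\ge c$ (Theorem \ref{thm:coinvariant_stabilization}); since $\langle P,Q\rangle_{G_d}=\mathbb{E}_{G_d}[P\overline{Q}]$ and $P\overline{Q}$ is the character of a free module of degree $\le c_1+c_2$, stabilization follows (Corollary \ref{cor:expectation_stabilization} and the corollary after it). Your route buys an elementary, self-contained proof with explicit stable values $\prod_i\frac{1}{a_i!\,i^{a_i}}$ for the basis expectations, which the categorical argument does not produce; the paper's route never uses the cycle structure of $S_n$, so it applies verbatim to every category of $\FI$ type ($\FI^m$, $\VI_q$, $\FI_G$, etc.) and yields additional structural information (integrality and monotonicity of the stable inner products for characters of free modules) that is invisible from the direct computation.
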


The benefit of Gan-Li's and Nagpal's approach is that free $\FI$-modules and character polynomials readily generalize to a wide class of categories similar to $\FI$, and do not require any understanding of the representation theory of the individual automorphism groups. Thus representation stability extends whenever these structures exist.

\subsection{Generalization to other categories}
Work on generalizing representation stability to other contexts has proceeded in several partially overlapping directions. A major direction on which we will be focused is that of modules over other categories $\CCat$ of injections, whose automorphism groups are of interest. Let $\CCat$ be a category.
\begin{definition}[\textbf{$\CCat$-modules}]\label{def:intro_modules}
A \emph{$\CCat$-module} over a ring $R$ is a covariant functor 
$$
M_\bullet: \CCat \mor{}\Mod{R}.
$$
For every object $c$, the evaluation $M_c$ is naturally a representation of the group $\Aut_\CCat(c)$ in $R$-modules, and these representation are related by the morphisms of $\CCat$.
\end{definition}
One then studies this category of representations,  describes the simultaneous class functions that generalize character polynomials, and proves the analog of Fact \ref{fact:stabilization}. For example:
\begin{enumerate}
\item Putman-Sam \cite{PS} considered the category $\CCat=\VI_q$ of finite dimensional vector spaces over $\F_q$ and injective linear maps, whose representations encode sequences of $\operatorname{Gl}_n(\F_q)$-representations.
\item Wilson \cite{Wi} studied $\CCat=\FI_{\mathcal{W}}$ whose automorphism groups are the classical Weyl groups $\mathcal{W}_n$ of type $B/C$ or $D$.
\item Sam-Snowden \cite{SS-FIG} and Gan-Li \cite{GL-coinduction} considered categories $\CCat=\FI_G$ for some group $G$, encoding representations of Wreath products $S_n\wr G$. Casto \cite{Ca} extended their treatment, and defined character polynomials in this context.
\item Barter \cite{Ba} considered the category $\CCat=\catname{T}$ of rooted trees with root-preserving embeddings.
\end{enumerate}
This approach has been further applied to topology, arithmetic and classical representation theory (see the respective citations).

Other generalizations considered categories of dimension zero, studied by Wiltshire-Gordon and Ellenberg (see \cite{WG} with applications in \cite{WGE}); homogeneous categories, studied by Randal-Williams and Wahl (see \cite{RWW}); and modules over twisted commutative algebras, studied by Sam-Snowden (see \cite{SS-tca}). We will not discuss these ideas here.

\bigskip In this paper we attempt to generalize and unify the treatments in Examples 1-4 and ask:
\begin{question}
What structure do these categories possess that supports the existence of a representation stability theory?
\end{question}
Here we offer an answer by fitting Examples 1-3 and others into the context of a broader theory: representation of categories \emph{of $\FI$ type}, i.e. categories that have structural properties similar to those of $\FI$ (see Definition \ref{def:FI-type} below). This approach is intended to subsume the individual treatments and eliminate the need to introduce a new theory in each specific case. At the same time, it allows one to consider new types of categories, such as the next example.

\begin{example}[\textbf{The categorical power $\FI^m$}]
As a first nontrivial example, and the original motivation behind this generalization, we consider the categorical powers $\FI^m$. These have objects that are (essentially) $m$-tuples $(n_1,\ldots,n_m)\in \N^m$ with automorphism groups the products $S_{n_1}\times\ldots\times S_{n_m}$. Such categories are the natural indexing category for various collections of linear subspace arrangements, to which our theory is applied in a companion paper \cite{Ga}. To see this at work consider the following example.

Fix $m\geq 1$ and let $\Rat^n(\Proj^1, \Proj^{m-1})$ be the space of based, degree $n$ rational maps ${\Proj^1\mor{}\Proj^{m-1}}$ that send $\infty$ to $[1:\ldots:1]$. This space admits an $(S_n\times\ldots\times S_n)$-covering $\PRat^n(\Proj^1, \Proj^{m-1})$ by picking an ordering on the zeros of the restrictions to the standard homogeneous coordinates functions on $\Proj^{m-1}$. The coverings fit naturally into a (contravariant) $\FI^m$-diagram of spaces, and their cohomology is an $\FI^m$-module. 

The groups $\Ho^i(\Rat^n(\Proj^1, \Proj^{m-1});\Q)$ can then be computed from the invariant part of the $(S_n\times\ldots\times S_n)$-representation $\Ho^i(\PRat^n(\Proj^1, \Proj^{m-1});\Q)$ by transfer. Representation stability for $\FI^m$-modules then gives the following.
\begin{thm}[\textbf{Homological stability for $\Rat^n$ \cite[Theorem 6.24]{Ga}}]\label{thm:intro-homological-stability}
For every $i\geq 0$ the $i$-th Betti number of $\Rat^n(\Proj^1, \Proj^{m-1})$ does not depend on $n$ for all $n\geq i$.
\end{thm}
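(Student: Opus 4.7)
The plan is to package the cohomologies of the covering spaces $\PRat^n(\Proj^1,\Proj^{m-1})$ into an $\FI^m$-module and then apply the representation stability theory developed in this paper, combined with a transfer argument. Since $\PRat^n \to \Rat^n$ is a finite Galois covering with group $(S_n)^m$, the rational transfer isomorphism gives
\[
\Ho^i(\Rat^n(\Proj^1,\Proj^{m-1});\Q) \;\cong\; \Ho^i(\PRat^n(\Proj^1,\Proj^{m-1});\Q)^{(S_n)^m}.
\]
It therefore suffices to prove that the multiplicity of the trivial representation inside the $(S_n)^m$-representation $\Ho^i(\PRat^n;\Q)$ becomes independent of $n$ once $n \geq i$, which is a statement perfectly suited to the character polynomial machinery.

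The key step is to show that for each fixed $i \geq 0$, the $\FI^m$-module $M^i_\bullet := \Ho^i(\PRat^\bullet(\Proj^1,\Proj^{m-1});\Q)$ is finitely generated with generation degree at most $i$ (measured with respect to the natural total grading on $\FI^m$). Granting this, the main results of the paper produce a single character polynomial $P_i$ of total degree at most $i$ such that $\chi_{M^i_n} = P_i$ as class functions on $(S_n)^m$ throughout the stable range. The $\FI^m$-analogue of Fact~\ref{fact:stabilization} then ensures that the inner product $\langle P_i,\mathbf{1}\rangle_{(S_n)^m}$, which by transfer equals $\dim \Ho^i(\Rat^n(\Proj^1,\Proj^{m-1});\Q)$, becomes independent of $n$ as soon as $n \geq \deg P_i$, yielding the asserted range $n \geq i$.

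To establish the generation bound I would exploit the explicit geometric description of $\PRat^n$ as the complement, inside $\prod_{j=1}^m \PConf^n(\A^1)$, of the ``resultant'' subvariety consisting of $m$-tuples of labelled configurations that share a point in at least one coordinate-position across all $j$. Its cohomology can be computed via a Leray spectral sequence for this open inclusion (equivalently, by an Orlik--Solomon-type arrangement model), with contributions indexed by combinatorial data recording which subsets of roots coincide across which of the $m$ coordinates. Each such stratum carries a natural $\FI^m$-module structure whose generation degree I can bound by the complexity of the stratum, and degree-$i$ cohomology only receives contributions from strata of complexity at most $i$. The main obstacle I anticipate is the careful bookkeeping: one must verify that the relevant $\FI^m$-modules assemble coherently through the differentials of the spectral sequence, and that the generation-degree bound is preserved under taking subquotients. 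The latter is precisely the Noetherian property for $\FI^m$-modules established in this paper, so the bookkeeping — rather than any substantial new structural input — is where the work lies.
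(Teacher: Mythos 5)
There is a genuine gap at the pivotal step. You establish (or plan to establish) only that $M^i_\bullet=\Ho^i(\PRat^\bullet(\Proj^1,\Proj^{m-1});\Q)$ is \emph{finitely generated} in degree $\leq i$, and then invoke ``the main results of the paper'' to obtain a character polynomial $P_i$ of degree $\leq i$ describing $\chi_{M^i_n}$ and an effective stable range $n\geq i$. But in this paper the character-polynomial description (Theorem \ref{thm:intro-categorification}) and the effective stabilization of inner products and coinvariants (Corollary \ref{cor:inro-hom-stabilization}, Theorem \ref{thm:coinvariant_stabilization}) are proved only for \emph{free} $\CCat$-modules; for merely finitely generated modules the paper proves stabilization only eventually and explicitly ``without the effective bounds on stable range,'' via the Noetherian property. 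A Nagpal-type theorem (finite generation implies a free resolution in a range, with degree control) is recalled for $\FI$ but is not proved here for $\FI^m$ or for general categories of $\FI$ type, so your passage from finite generation to ``$\chi_{M^i_n}=P_i$ with $\deg P_i\leq i$'' and to the range $n\geq i$ is unsupported. The intended argument (carried out in the companion paper \cite{Ga}, which is why the theorem is stated here only by citation) is stronger at exactly this point: one shows that $\Ho^i(\PRat^\bullet;\Q)$ is a \emph{free} $\FI^m$-module of degree $\leq i$, because $\PRat^n$ is the complement of an $\FI^m$-diagram of linear subspace arrangements and the Goresky--MacPherson/Orlik--Solomon presentation of such cohomology is natural and exhibits it as a finite sum of induction modules (this is Corollary \ref{cor:intro-application-to-rational-maps}). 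Freeness then feeds into Theorem \ref{thm:coinvariant_stabilization} with its explicit range, and the transfer identification $\Ho^i(\Rat^n;\Q)\cong \Ho^i(\PRat^n;\Q)^{S_n\times\cdots\times S_n}$ (whose dimension equals that of the coinvariants in characteristic $0$) gives constancy for $n\geq i$.

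Two further points would need repair even on your route. First, the geometric model is off: $\PRat^n$ is not an open subset of $\prod_{j}\PConf^n(\A^1)$, since the zeros of a single coordinate polynomial may collide; the correct model is the complement in $(\A^1)^{nm}$ of the arrangement of subspaces $\{x^{(1)}_{k_1}=\cdots=x^{(m)}_{k_m}\}$, and it is precisely this linear-arrangement structure that makes the freeness theorem of \cite{Ga} applicable. Second, your appeal to the Noetherian property to preserve generation-degree bounds through spectral sequence subquotients is not what Noetherianity provides: it guarantees finite generation of submodules but gives no control on the degree of generation, so even the weaker finitely-generated-in-degree-$\leq i$ claim would not follow from the bookkeeping as described, and with it any hope of the effective bound $n\geq i$ would be lost.
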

\end{example}
In \S\ref{sec:rep_theory_of_FI} we discuss representation stability for $\FI^m$, which allows one to make such claims as Theorem \ref{thm:intro-homological-stability}. We remark that similar treatment could be applied to any product of categories whose representation stability is understood, but we do not pursue other examples here.

\subsection{Categories of $\FI$ type and free modules}
As outlined above, we are looking for categorical structure that gives rise to character polynomials satisfying Fact \ref{fact:stabilization}. We propose the following.
\begin{definition}[\textbf{Categories of $\FI$ type}] \label{def:FI-type}
We say that a category $\CCat$ is \emph{of $\FI$ type} if it satisfies the following axioms.
\begin{enumerate}
\item $\CCat$ is locally finite, i.e. all hom-sets are finite\footnote{Finiteness is not strictly necessary for many of the definitions and subsequent results. The author will be very interested to see how far one can push this theory with infinite automorphism groups}.
\item Every morphisms is a monomorphism, and every endomorphisms is an isomorphism.
\item For every pair of objects $c$ and $d$, the group of automorphisms $\Aut_\CCat(d)$ acts transitively on the set $\Hom_\CCat(c,d)$.
\item For every object $d$ there exist only finitely many isomorphism classes of objects $c$ for which $\Hom_\CCat(c,d)\neq \emptyset$ (we denote this by $c\leq d$).
\item $\CCat$ has pullbacks and pushouts\footnote{Pushouts are not quite what we want here, as these typically do not exists when one insists that all morphisms be injective. We replace this notion by \emph{weak pushouts}, defined below.}.
\end{enumerate}
\begin{remark}
Categories that satisfy the second half of condition 2 -- where every endomorphism is an isomorphism -- are called $\catname{EI}$ categories. The representation stability of such categories satisfying additional combinatorial conditions was studied by Gan-Li in \cite{GL-EI}.
\end{remark}

We will denote the automorphism group of an object $c$ by $G_c$.
\end{definition}

In \S\ref{subsec:char_poly} we define the collection of \emph{character polynomials} for a general category $\CCat$ of $\FI$ type - these are certain $\C$-valued class functions simultaneously defined on all automorphism groups $G_c$. Briefly, character polynomials are linear combinations of functions of the form $\binom{X}{\lambda}$ where $\lambda\subset G_c$ is some fixed conjugacy class. $\binom{X}{\lambda}$ evaluates on $g_d\in G_d$ to give the number of ways $g_d$ can be restricted to an element $g_c\in\lambda$, i.e. via morphisms $c\mor{f}d$ for which $g_d\circ f = f\circ g_c$ with $g_c\in \lambda$.
 
However, it is not at all clear that these functions satisfy the analog of Fact \ref{fact:stabilization}, or even that they can be reasonably thought of as polynomials, i.e. closed under taking products. To demonstrate these fundamental properties we propose a categorification of character polynomials, similar to the way in which group representations categorify class functions. Our categorification takes the form of \emph{free $\CCat$-modules}, introduced in Section \S\ref{sec:free-modules}.
\begin{definition}[\textbf{Free $\CCat$-modules}]\label{def:intro_free}
A $\CCat$-module is said to be \emph{free} if it is a direct sum of modules of the form $\Ind_c(V)$, where $\Ind_c$ is the left-adjoint functor to the restriction $M_\bullet \mapsto M_c$.

\begin{note}
Since we are only discussing finitely-generated $\CCat$-modules, free modules will always be taken to be \emph{finite} direct sums. Over the field of complex numbers these $\CCat$-modules are precisely the finitely-generated, projective ones.
\end{note}
\end{definition} 
This choice of categorification is justified by the 
following observation.
\begin{thmx}[\textbf{Categorification of character polynomials}]\label{thm:intro-categorification}
If $M_\bullet$ is a free $\CCat$-module over $\C$, then there exists a character polynomial $P$ whose restriction to $G_c$ coincides with the character of $M_c$ for every object $c$.

Conversely, the character polynomials that arise in this way span the space of all character polynomials on $\CCat$, defined in \S\ref{subsec:char_poly} below.
\end{thmx}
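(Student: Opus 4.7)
The plan is to derive an explicit formula for the character of each induced module $\Ind_c(V)$, express it in the basis $\{\binom{X}{\lambda}\}$ of character polynomials, and then invert that identity using ordinary character theory of the finite group $G_c$.

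First, the left Kan extension description of the left adjoint $\Ind_c$ to the evaluation functor $M_\bullet \mapsto M_c$ identifies
\[
\Ind_c(V)_d \;\cong\; \C[\Hom_\CCat(c,d)] \otimes_{\C[G_c]} V,
\]
with $G_c$ acting on $\Hom_\CCat(c,d)$ on the right by precomposition and $G_d$ acting on the left by postcomposition. Axiom 2 implies that $f\circ g_c = f$ forces $g_c = \mathrm{id}$, so the right $G_c$-action is free. Choosing a set of orbit representatives therefore identifies $\Ind_c(V)_d$ with a direct sum of copies of $V$ indexed by $\Hom_\CCat(c,d)/G_c$.

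Next I would compute the trace of $g_d\in G_d$ on this decomposition. Because $f$ is a monomorphism, $g_d$ fixes the orbit of $f$ precisely when there exists a (necessarily unique) $g_c\in G_c$ with $g_d\circ f = f\circ g_c$, in which case $g_d$ acts on that copy of $V$ through $g_c$. Summing over all $f\in\Hom_\CCat(c,d)$, so as to absorb the choice of orbit representatives, and then grouping by the conjugacy class $\lambda\subset G_c$ of the restriction, I obtain
\[
\chi_{\Ind_c(V)_d}(g_d) \;=\; \frac{1}{|G_c|}\sum_\lambda \chi_V(\lambda)\,\binom{X}{\lambda}(g_d).
\]
Since this identity is uniform in $d$ and $g_d$, the character of $\Ind_c(V)$ is the character polynomial $\frac{1}{|G_c|}\sum_\lambda \chi_V(\lambda)\binom{X}{\lambda}$, and taking direct sums yields the first half of the theorem.

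For the converse, I need to realize each basis polynomial $\binom{X}{\lambda}$, with $\lambda\subset G_c$ a conjugacy class, as a $\C$-linear combination of characters of free modules. The class-function indicator $\mathbf{1}_\lambda$ on $G_c$ admits an expansion $\mathbf{1}_\lambda = \sum_W a_W\,\chi_W$ into irreducible characters of $G_c$, since irreducible characters span the space of class functions on a finite group. Substituting this into the formula above, the combination $|G_c|\sum_W a_W\,\chi_{\Ind_c(W)}$ collapses to $\binom{X}{\lambda}$, because the coefficient of $\binom{X}{\mu}$ becomes $\sum_W a_W\chi_W(\mu) = \mathbf{1}_\lambda(\mu) = \delta_{\mu,\lambda}$. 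The main obstacle is foundational: pinning down the pointwise Kan extension formula and verifying that the notion of ``restriction of $g_d$ along $f$'' used in my character computation is literally the one appearing in the definition of $\binom{X}{\lambda}$ in \S\ref{subsec:char_poly}. Once those identifications are secure, the theorem reduces to a short bookkeeping exercise combined with Schur orthogonality on $G_c$.
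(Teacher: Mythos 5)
Your argument follows the paper's own route almost exactly: the paper's Lemma \ref{lem:character_of_ind} computes the character of $\Ind_c(V)$ from the decomposition $\Ind_c(V)_d\cong\bigoplus_{[f]\in\binom{d}{c}}V$ (using that every morphism is mono, so each orbit is a free $G_c$-orbit, and that a fixed class $[f]$ contributes $\chi_V(\psi)$ where $\sigma\circ f=f\circ\psi$), and Theorem \ref{thm:categorification} obtains the converse exactly as you do, by writing the indicator class function of $\mu\subseteq G_c$ as a virtual combination of irreducible characters and inducing.

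The one slip is the factor $\tfrac{1}{|G_c|}$ in your character formula. In Definition \ref{def:binom_set} the binomial set is the orbit set $\binom{d}{c}=\Hom_\CCat(c,d)/G_c$, so $\binom{X}{\lambda}(g_d)$ counts \emph{classes} $[f]$ admitting a restriction in $\lambda$, not individual morphisms $f$. Your averaging over all $f\in\Hom_\CCat(c,d)$ is fine as a way to avoid choosing orbit representatives (each orbit fixed by $g_d$ contributes $|G_c|$ morphisms, all with conjugate restrictions), but when you regroup by conjugacy class the count of such morphisms is $|G_c|\cdot\binom{X}{\lambda}(g_d)$, so the two factors of $|G_c|$ cancel and the correct identity is
\begin{equation*}
\chi_{\Ind_c(V)}(g_d)\;=\;\sum_{\lambda}\chi_V(\lambda)\,\binom{X}{\lambda}(g_d),
\end{equation*}
with no $\tfrac{1}{|G_c|}$, as in Lemma \ref{lem:character_of_ind}. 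Since you carry your extra constant consistently into the converse (your $|G_c|\sum_W a_W\chi_{\Ind_c(W)}$ should simply be $\sum_W a_W\chi_{\Ind_c(W)}$), neither assertion of the theorem is affected: the error is a uniform scalar, and character polynomials are closed under scaling. So the proof is essentially the paper's, modulo this bookkeeping correction, which is exactly the identification ("restriction counted over orbits, not morphisms") you yourself flagged as needing verification.
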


The structure of $\FI$ type then ensures that the class of free $\CCat$-modules, and subsequently character polynomials, has the properties that ultimately produce representation stability.
\begin{thmx}[\textbf{The class of free $\CCat$-modules}]\label{thm:intro-free-modules}
If $\CCat$ is a category of $\FI$ type, then the class of (finitely-generated) free $\CCat$-modules over $\C$ has the following properties:
\begin{enumerate}
\item The tensor product of two free $\CCat$-modules is again free.
\item There is a \emph{degree} filtration on free $\CCat$-modules, taking values in the objects of $\CCat$. Direct sums and tensor products act on this degree in the usual way with respect to an order relation $\leq$ on $\CCat$ and object addition $+$ defined below.
\item Every free $\CCat$-module $M_\bullet$ has a dual $\CCat$-module $M^*_\bullet: c\mapsto \Hom_{\C}(M_c,\C)$, which is again free of the same degree.
\item If $M_\bullet$ is a free $\CCat$-module of degree $\leq c$, then for every object $d\geq c$ the coinvariants $(M_d)/G_d$ are canonically isomorphic.
\end{enumerate}
\end{thmx}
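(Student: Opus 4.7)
The plan begins by unpacking the concrete description
\[
\Ind_c(V)(d) \cong \C[\Hom_\CCat(c,d)] \otimes_{\C[G_c]} V,
\]
which follows from the left-Kan-extension definition of $\Ind_c$; Axioms (1) and (4) guarantee this is finite-dimensional, and Axioms (2)--(3) make $\Hom_\CCat(c,d)$ into a $(G_d, G_c)$-biset on which $G_d$ acts transitively and $G_c$ acts freely (the latter because all morphisms are monomorphisms). With this formula in hand, parts (4) and (2) are relatively direct, while parts (1) and (3) draw on the remaining axioms.

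For part (4), fix any morphism $f_0 : c \to d$. A short orbit-stabilizer calculation identifies the $(G_d \times G_c)$-stabilizer $S$ of $f_0$ as a subgroup projecting onto $G_c$ (surjectivity from Axiom 3) with kernel $\Stab_{G_d}(f_0) \times \{e\}$. Since this kernel acts trivially on $V$, taking $S$-coinvariants gives $V_S \cong V_{G_c}$, and passing this through the tensor product yields the canonical isomorphism $(\Ind_c(V)(d))_{G_d} \cong V_{G_c}$. Naturality in $d$ is automatic because all structure maps act by postcomposition on the $\Hom$-set. For part (2), declare $\deg(\Ind_c(V)) = c$ and extend via the partial order $\leq$ on $\CCat$; addition of objects will be defined in terms of weak pushouts (Axiom 5), with the compatibility under tensor products deferred to part (1).

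Part (1) is the main obstacle. Pointwise at $d$ the tensor is
\[
\Ind_c(V)(d) \otimes \Ind_{c'}(V')(d) \cong \C[\Hom(c,d) \times \Hom(c',d)] \otimes_{G_c \times G_{c'}} (V \otimes V').
\]
The pullback/weak-pushout axiom is the essential structural input: every pair $(f, f') \in \Hom(c,d) \times \Hom(c',d)$ factors through the weak pushout $e = c \sqcup_p c'$, where $p$ is the pullback of $f$ and $f'$ inside $d$. Stratifying pairs by the isomorphism type of the resulting $e$ decomposes $\Hom(c,d) \times \Hom(c',d)$ into $G_d$-orbits, each identified with $\Hom(e, d)$ (modulo automorphisms of $e$ over $c, c'$). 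Using the natural maps $G_c, G_{c'} \to G_e$ to promote $V \otimes V'$ to a $G_e$-representation $W$, each stratum contributes a summand $\Ind_e(W)$, exhibiting the whole tensor product as free and making its degree additive under the weak-pushout operation.

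For part (3) the candidate is $\Ind_c(V)^* \cong \Ind_c(V^*)$. Because $\Hom(c,d)$ is finite (Axiom 1) and we work over $\C$, induction and coinduction along the relevant finite-index inclusions agree canonically, so
\[
\Hom_\C\bigl(\C[\Hom(c,d)] \otimes_{G_c} V, \; \C\bigr) \cong \C[\Hom(c,d)] \otimes_{G_c} V^*
\]
as $G_d$-representations, and functoriality in $d$ matches under this identification because the dual of postcomposition by $d \to d'$ corresponds to postcomposition itself under the canonical pairing. The degree is preserved since the generating object $c$ is unchanged. The expected principal difficulty throughout is part (1): the weak-pushout orbit decomposition must be set up carefully enough that both the stratification and the construction of the $G_e$-representation $W$ on each stratum are canonical and independent of chosen representatives.
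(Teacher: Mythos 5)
Your treatments of parts (2)--(4) are essentially the paper's: the degree is declared on induction modules with sums of objects defined through weak pushouts; duality is $\Ind_c(V)^*=\Ind_c(V^*)$, which works because the right $G_c$-action on $\Hom_\CCat(c,d)$ is free (the paper instead writes down the explicit nondegenerate pairing, but the content is the same); and your orbit--stabilizer computation for part (4) is just a rephrasing of the paper's observation that $G_d\backslash\Hom_\CCat(c,d)$ is a single point whenever $c\leq d$.

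The genuine gap is in part (1), exactly at the step you flagged. There are no ``natural maps $G_c, G_{c'}\to G_e$'': given the pushout structure maps $r\colon c\to e$, $r'\colon c'\to e$, an automorphism $g\in G_c$ in general admits no automorphism of $e$ commuting with both $r$ and $r'$ (in $\FI$, take $e=c\cup c'$ and $g$ a permutation of $c$ not preserving $c\cap c'$); extensions compatible with $r$ alone exist by Axiom 3 but are non-unique and do not assemble into a homomorphism. More importantly, even where an embedding $G_c\times G_{c'}\hookrightarrow G_e$ can be chosen, restricting $V\boxtimes V'$ along it gives the wrong coefficient: the correct one is $\C[\PO{e}{c,c'}]\otimes_{G_c\times G_{c'}}(V\boxtimes V')$, where $\PO{e}{c,c'}$ is the set of pairs $(c\to e,\,c'\to e)$ presenting $e$ as a weak pushout, viewed as a $(G_e,\,G_c\times G_{c'})$-biset; this is an induced-type representation, usually of much larger dimension than $\dim V\cdot\dim V'$. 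A dimension count already refutes your version in $\FI$ with $c=c'=\{\ast\}$ and $V=V'=\C$: the tensor product has dimension $n^2$ at level $n$, and the correct decomposition is $\Ind_1(\C)\oplus\Ind_2(\C[S_2])$ of dimension $n+n(n-1)$, whereas ``promoting'' $\C\otimes\C$ yields $\Ind_1(\C)\oplus\Ind_2(\C)$ of dimension $n+n(n-1)/2$. Relatedly, the stratum of pairs $(f,f')$ whose pushout has isomorphism type $e$ is not a single $G_d$-orbit identified with $\Hom(e,d)$ modulo automorphisms of $e$ over $(c,c')$: it is $\Hom(e,d)\times_{G_e}\PO{e}{c,c'}$, i.e.\ one copy of $\Hom(e,d)$ for each $G_e$-orbit of overlap patterns (four copies, for instance, when $c=c'=\{1,2\}$ overlap in one point). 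The paper's Lemma \ref{lem:products_set_version} is precisely the careful set-level statement your outline needs: a natural, $(G_c\times G_{c'})$-equivariant isomorphism between $\Hom(c,\bullet)\times\Hom(c',\bullet)$ and $\coprod_{[e]}\Hom(e,\bullet)\times_{G_e}\PO{e}{c,c'}$, whose well-definedness rests on the uniqueness clause in the weak-pushout universal property. Feeding that isomorphism into $\C[-]\otimes_{G_c\times G_{c'}}(V\boxtimes V')$ gives the corrected coefficient above; with this replacement the rest of your argument goes through, and degree additivity still follows since $\PO{e}{c,c'}\neq\emptyset$ forces $e$ to be a weak pushout of $c$ and $c'$.
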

This statement -- especially closure under tensor products -- is nontrivial and depends critically on the structure of $\FI$ type. For example, the specialization to $\FI$-modules was proved in \cite{CEF} using the projectivity of $\FI\#$-modules, and is related to the fact that products of binomial coefficients $\binom{n}{k}\binom{n}{l}$ can be expressed as linear combinations of $\binom{n}{r}$ with $r\leq k+l$.

\begin{remark}[\textbf{Working over different fields}]
Most of the results in Theorem \ref{thm:intro-free-modules} are set-theoretic in nature and follow from combinatorial properties of $\CCat$-sets. They thus hold in greater generality with the base field $\C$ replaced with an arbitrary commutative ring $R$. However, when trying to decategorify and conclude character-theoretic results, the assumption of characteristic $0$ becomes necessary. To simplify our exposition, we will phrase the results only for $\CCat$-modules over $\C$.
\end{remark}

Theorem \ref{thm:intro-free-modules} in particular gives the categorified analog of Fact \ref{fact:stabilization}. This fact captures the stabilization of the sequence of representations, as we shall see in the Application 1 below.
\begin{cor}[\textbf{$\boldsymbol{\Hom}$ stabilization}]\label{cor:inro-hom-stabilization}
If $M_\bullet$ and $N_\bullet$ are free $\CCat$-modules of respective degrees $\leq c_1$ and $\leq c_2$, then the spaces
\begin{equation}\label{eq:intro_tensor_hom}
\Hom_{G_d}(M_d,N_d) \cong (M^* \otimes N)_d/G_d
\end{equation}
are canonically isomorphic for all $d\geq c_1+c_2$.
\end{cor}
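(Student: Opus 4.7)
The plan is to reduce the statement entirely to an application of Theorem \ref{thm:intro-free-modules}, using each of its four parts in turn. Essentially the corollary is exactly the decategorification of Fact \ref{fact:stabilization} obtained by unpacking the pieces already assembled in Theorem \ref{thm:intro-free-modules}.

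First I would establish the pointwise isomorphism in (\ref{eq:intro_tensor_hom}). For any finite-dimensional $\C[G_d]$-representations $V, W$ there is a canonical identification
\begin{equation*}
\Hom_{G_d}(V, W) = \Hom_\C(V, W)^{G_d} = (V^* \otimes W)^{G_d},
\end{equation*}
and in characteristic $0$ the averaging map identifies invariants with coinvariants, so this further equals $(V^* \otimes W)/G_d$. Taking $V = M_d$ and $W = N_d$, and noting that by the very definition of the dual $\CCat$-module we have $(M^*)_d = (M_d)^*$, yields the canonical isomorphism $\Hom_{G_d}(M_d, N_d) \cong (M^* \otimes N)_d / G_d$ for every single object $d$.

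Next I would assemble the degree bound. By part 3 of Theorem \ref{thm:intro-free-modules}, the dual $M^*_\bullet$ is a free $\CCat$-module of degree $\leq c_1$. By part 1, the tensor product $M^* \otimes N$ is again free, and by part 2 its degree is bounded by $c_1 + c_2$ with respect to the object addition. Thus $M^* \otimes N$ is a free $\CCat$-module of degree $\leq c_1 + c_2$.

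Finally I would apply part 4 of Theorem \ref{thm:intro-free-modules} to the free module $M^* \otimes N$: for every $d \geq c_1 + c_2$ the coinvariants $(M^* \otimes N)_d / G_d$ are canonically isomorphic to one another. Combined with the pointwise identification from the first step, this gives the canonical isomorphism of $\Hom_{G_d}(M_d, N_d)$ for all such $d$, as required. There is no real obstacle beyond Theorem \ref{thm:intro-free-modules} itself; the only subtlety worth flagging is the characteristic-zero hypothesis in passing between invariants and coinvariants, which is already built into the standing convention of working over $\C$.
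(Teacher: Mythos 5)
Your proposal is correct and follows essentially the same route as the paper: the paper's proof (packaged in Lemma \ref{lem:extensions}) identifies $\Hom_{G_d}(M_d,N_d)$ with $\left((M^*\otimes N)/G\right)_d$ by averaging over $G_d$ in characteristic zero, then applies freeness of the dual, additivity of degree under tensor product, and coinvariant stabilization (Theorem \ref{thm:coinvariant_stabilization}) to conclude stabilization for $d\geq c_1+c_2$. The only minor point is that the identification $(M^*)_d\cong (M_d)^*$ is not literally the definition of the dual module (which is defined via $\Ind_c(V)^*=\Ind_c(V^*)$) but is established through the non-degenerate evaluation pairing; since you invoke part 3 of Theorem \ref{thm:intro-free-modules}, which records exactly this, your argument stands as written.
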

When the objects of $\CCat$ are parameterized by natural numbers, the addition $c_1+c_2$ is the usual addition operations. For the general definition of addition on objects, see Definition \ref{def:sums} below. Note that the identification of the two sides in Equation \ref{eq:intro_tensor_hom} is where characteristic $0$ assumption is used.

Decategorifying back to characters, one obtains the following.
\begin{cor}[\textbf{Inner product stabilization}]
If $P$ and $Q$ are character polynomials of respective degrees $\leq c_1$ and $\leq c_2$, then the inner products
\begin{equation}
\langle P, Q\rangle_{G_d} = \frac{1}{|G_d|}\sum_{g\in G_d} \bar{P}(g)Q(g)
\end{equation}
become independent of $d$ for all $d\geq c_1+c_2$.
\end{cor}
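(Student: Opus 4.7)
The plan is to decategorify the inner product statement via Corollary \ref{cor:inro-hom-stabilization} and the classical identification over $\C$ of the Schur inner product with the dimension of a Hom space. Concretely, I would proceed as follows.

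First, using Theorem \ref{thm:intro-categorification} together with the degree filtration in Theorem \ref{thm:intro-free-modules}(2), I would express $P$ and $Q$ as $\C$-linear combinations of characters of free $\CCat$-modules of the appropriate degrees:
\[
P \;=\; \sum_i a_i\, \chi_{M^i_\bullet}, \qquad Q \;=\; \sum_j b_j\, \chi_{N^j_\bullet},
\]
with each $M^i_\bullet$ free of degree $\leq c_1$ and each $N^j_\bullet$ free of degree $\leq c_2$. Substituting into the definition of the inner product and using its sesquilinearity yields
\[
\langle P, Q\rangle_{G_d} \;=\; \sum_{i,j} \bar{a}_i\, b_j\, \langle \chi_{M^i_\bullet}, \chi_{N^j_\bullet}\rangle_{G_d},
\]
so it is enough to prove stabilization for each individual pair $(M^i_\bullet, N^j_\bullet)$ of free modules.

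For such a pair, the standard character-theoretic identity in characteristic zero gives
\[
\langle \chi_{M^i_d}, \chi_{N^j_d}\rangle_{G_d} \;=\; \dim_\C \Hom_{G_d}(M^i_d, N^j_d).
\]
By Corollary \ref{cor:inro-hom-stabilization}, the right-hand side is the dimension of a space canonically isomorphic for all $d \geq \deg(M^i_\bullet) + \deg(N^j_\bullet)$, and in particular for all $d \geq c_1 + c_2$, using the monotonicity of the ordering $\leq$ with respect to the object-addition $+$ recorded in Theorem \ref{thm:intro-free-modules}(2). Hence each $\langle \chi_{M^i_\bullet}, \chi_{N^j_\bullet}\rangle_{G_d}$ is constant in $d$ throughout this range, and so is the original $\langle P, Q\rangle_{G_d}$.

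The main obstacle I anticipate is already contained in the very first step: one needs to know not merely that characters of free modules span all character polynomials (Theorem \ref{thm:intro-categorification}), but that the degree filtration on the polynomial side (defined in \S\ref{subsec:char_poly} via the generators $\binom{X}{\lambda}$) matches the categorical degree filtration on free modules from Theorem \ref{thm:intro-free-modules}(2). Establishing this compatibility should come down to a direct computation showing that the character of $\Ind_c(V)$ is a combination of $\binom{X}{\lambda}$ with $\lambda \subset G_{c'}$ for $c' \leq c$, together with a surjectivity argument on graded pieces. Granted this matching, the remainder of the argument is a short decategorification; without it, one would have to settle for a stable range determined by an arbitrary expression of $P$ and $Q$ as sums of free-module characters, rather than by $c_1 + c_2$ directly.
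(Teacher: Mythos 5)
Your proposal is correct and takes essentially the paper's own route: the paper writes $\langle P,Q\rangle_{G_d}=\mathbb{E}_{G_d}[P\bar{Q}]$ and invokes expectation stabilization (Corollary \ref{cor:expectation_stabilization}), which rests on exactly the same machinery (categorification by free modules, duals, tensor closure, and coinvariant stabilization) that underlies the Hom-stabilization statement you apply pairwise after expanding $P$ and $Q$. The degree-matching you flag as a potential obstacle is not a gap: it is precisely the content of Theorem \ref{thm:categorification}, proved via Lemma \ref{lem:character_of_ind}, which shows each $\binom{X}{\mu}$ is the character of a virtual free module of degree exactly $|\mu|$ and conversely.
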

These claims will be proved in \S\ref{sec:free-modules} and \S\ref{sec:coinvariants}.

\subsection{Application 1: Stabilization of irreducible multiplicities}
Let $G$ be a finite group. Recall that over $\C$ the irreducible decomposition of a $G$-representation can be detected by $G$-intertwiners. Explicitly, if $V$ is a $G$-representation and $W$ is an irreducible representation, then the multiplicity at which $W$ appears in $V$ is $\dim \Hom_G (W,V)$. Similarly, if $V = \oplus_{i} W_i^{r_i}$ is an irreducible decomposition then
$$
\dim \Hom_G(V,V) = \sum_{i} r_i^2.
$$

Corollary \ref{cor:inro-hom-stabilization} then demonstrates that these dimensions stabilize in the case of free $\CCat$-modules.
\begin{cor}[\textbf{Stabilization of irreducible decomposition}]
Let $M_\bullet$ be a free $\CCat$-module of degree $\leq c$. At every object $d$ let
$$
M_d = \oplus_i W(d)_i^{r(d)_i}.
$$
be an irreducible decomposition. Then the sums $\sum_{i} r(d)_i^2$ do not depend on $d$ for $d\geq c+c$. More generally, if $N_\bullet$ is any other free $\CCat$-module of degree $\leq c'$ with irreducible decompositions
$$
N_d = \oplus_i W(d)_i^{s(d)_i}
$$
then the sums $\sum_i r(d)_i\cdot s(d)_i$ do not depend on $d$ for all $d\geq c+c'$.
\end{cor}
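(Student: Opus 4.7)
The plan is to reduce the statement to Corollary \ref{cor:inro-hom-stabilization}, which already packages the categorical stabilization into a statement about $\Hom$ spaces. The entire corollary will then fall out as a direct decategorification.

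First I would invoke the classical fact from representation theory of finite groups over $\C$: for any finite group $G$ and any two $G$-representations $V=\oplus_i W_i^{r_i}$ and $V'=\oplus_i W_i^{s_i}$ with common indexing of the irreducibles $W_i$, orthogonality of characters gives
\[
\dim_\C \Hom_G(V,V') \; = \; \sum_i r_i s_i.
\]
Applied pointwise at each object $d$ of $\CCat$, this identifies the sum $\sum_i r(d)_i s(d)_i$ with the integer $\dim_\C \Hom_{G_d}(M_d, N_d)$. Thus the question of stabilization of these sums becomes the question of stabilization of the dimensions of the $\Hom$ spaces $\Hom_{G_d}(M_d, N_d)$.

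Next I would simply quote Corollary \ref{cor:inro-hom-stabilization}: since $M_\bullet$ and $N_\bullet$ are free $\CCat$-modules of degrees $\leq c$ and $\leq c'$ respectively, the spaces
\[
\Hom_{G_d}(M_d, N_d) \; \cong \; (M^*\otimes N)_d / G_d
\]
are canonically isomorphic for all $d\geq c+c'$. In particular their dimensions are constant along this range, giving $\sum_i r(d)_i s(d)_i = \sum_i r(d')_i s(d')_i$ whenever $d,d'\geq c+c'$. The first assertion $\sum_i r(d)_i^2$ is then just the specialization $N_\bullet = M_\bullet$, $c'=c$, where the bound becomes $d\geq c+c$.

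There is no genuine obstacle at this stage: the work has already been done in establishing Theorem \ref{thm:intro-free-modules} (closure of free modules under tensor product and duality, plus the stabilization of coinvariants) and its consequence Corollary \ref{cor:inro-hom-stabilization}. The only subtlety worth flagging is that the identification $\Hom_{G_d}(M_d,N_d) \cong (M^*\otimes N)_d/G_d$ uses the characteristic zero assumption to pass between invariants and coinvariants via the averaging projector; this is exactly where working over $\C$ is essential, as noted in the remark preceding the corollary.
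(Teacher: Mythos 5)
Your proposal is correct and follows exactly the paper's own route: identify $\sum_i r(d)_i s(d)_i$ with $\dim_\C \Hom_{G_d}(M_d,N_d)$ via Schur orthogonality over $\C$, then invoke Corollary \ref{cor:inro-hom-stabilization} for stabilization in the range $d\geq c+c'$, with the first claim as the special case $N_\bullet = M_\bullet$.
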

By choosing the test module $N_\bullet$ carefully, one can gain more information as to the individual multiplicities $r(d)_i$. In particular, it is often possible to relate the irreducible representations of the different groups $G_d$ and show that the individual multiplicities in fact stabilize for all $d\geq c+c$.

\subsection{Application 2: The category of $\CCat$-modules is Noetherian}
One of the most important themes in representation stability is the Noetherian property: the subcategory of finitely-generated modules is closed under taking submodules. This allows one to apply tools from homological algebra to finitely-generated modules, with far reaching applications (see e.g. \cite{CEF} and \cite{ICM}).

\begin{example}[\textbf{Configuration spaces of manifolds \cite{CEF}}]
Let $M$ be an orientable manifold. For every finite set $S$ the space of $S$-configurations on $M$, $\PConf^S(M)$, is the space of injections from $S$ to $M$. The functor $S\mapsto \PConf^S(M)$ is an $\FI^{\op}$-space, and $S\mapsto\Ho^i(\PConf^S(M);\Q)$ is an $\FI$-module.

Totaro \cite{To} proved that there is a spectral sequence converging to $\Ho^i(\PConf^S(M))$ and \cite{CEF} showed that the every $E^{p,q}_2$-term of this sequence is a finitely-generated $\FI$-module. \cite{CEF} also prove that $\FI$-modules over $\Q$ is Noetherian, and therefore finite-generation persists to the $E_\infty$-page, and subsequently to $\Ho^i$. Therefore the sequence $\Ho^i(\PConf^S(M);\Q)$ exhibits representation stability. One direct result is that the $i$-th Betti number of $\PConf^S(M)$ is eventually polynomial in $|S|$.
\end{example}

A corollary of the theory developed here is that the same Noetherian property holds in general.
\begin{thmx}[\textbf{The category $\Mod{\CCat}$ is Noetherian}]\label{thm:intro-noetherian}
If $\CCat$ is a category of $\FI$ type, then the category of $\CCat$-modules over $\C$ is Noetherian. That is, every submodule of a finitely generated $\CCat$-module is itself finitely generated.
\end{thmx}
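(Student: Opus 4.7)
My plan is to reduce the theorem to showing that every submodule of a finitely-generated free $\CCat$-module is itself finitely-generated. This reduction is routine: any finitely-generated $\CCat$-module $M_\bullet$ admits a surjection $F_\bullet \twoheadrightarrow M_\bullet$ from a finitely-generated free module, and any submodule $N_\bullet \subset M_\bullet$ pulls back to a submodule $\widetilde{N}_\bullet \subset F_\bullet$ containing the kernel, whose finite generation immediately yields that of $N_\bullet$.

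For the reduced statement, I would proceed by Noetherian induction on the degree of the free module with respect to the partial order $\leq$ on objects (whose well-foundedness follows from axiom 4). The base case, when $F_\bullet$ is supported at a minimal object $c$, reduces to the Noetherianity of the finite-dimensional group algebra $\C[G_c]$. For the inductive step, given $N_\bullet \subset F_\bullet$ with $F_\bullet$ free of degree $\leq c$, part 4 of Theorem \ref{thm:intro-free-modules} asserts that the coinvariants $(F_d)/G_d$ are canonically isomorphic for all $d \geq c$, so $F_\bullet$ carries only a finite amount of genuinely `new' quotient data above some threshold $d_0$. I would let $N'_\bullet$ denote the submodule of $N_\bullet$ generated by the finite-dimensional pieces $N_d$ with $d \leq d_0$ (each $N_d$ being finite-dimensional by local finiteness), and argue that $N'_\bullet = N_\bullet$ by using axioms 3 and 5 to transport an arbitrary element of $N_d$ (for $d > d_0$) along an automorphism in $G_d$ and pull it back along a morphism from a smaller object, thereby expressing it as a $\CCat$-combination of the chosen generators.

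The main obstacle is precisely this last step: the coinvariant stabilization only yields equality of $G_d$-orbits, whereas finite generation demands equality as a $\CCat$-module. Bridging this gap requires careful use of the transitivity of $G_d$ on $\Hom_\CCat(c,d)$ (axiom 3) together with the pullback structure (axiom 5), and likely Corollary \ref{cor:inro-hom-stabilization} applied to pairs of free modules of the form $(\Ind_d V, F_\bullet)$ to control, in a stably finite-dimensional way, how many genuinely new generators can arise at each degree $d$. Should this direct induction prove too delicate in the general axiomatic setting, a robust fallback is to adapt the Gr\"obner-basis methodology of Sam--Snowden: using local finiteness (axiom 1) and axiom 4, one constructs a well-ordering on $\CCat$-morphisms compatible with composition, defines leading terms for elements of free $\CCat$-modules, and shows that the ideal of leading terms of any submodule is finitely generated, yielding the ascending chain condition directly.
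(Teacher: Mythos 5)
Your reduction to submodules of finitely-generated free modules is fine, but the heart of the argument --- the step you yourself flag as ``the main obstacle'' --- is genuinely missing, and the mechanism you propose for it does not work. Coinvariant stabilization (Theorem \ref{thm:intro-free-modules}(4), Theorem \ref{thm:coinvariant_stabilization}) is a statement about the ambient free module $F_\bullet$; it says nothing about which vectors of a submodule $N_d$ lie in the $\CCat$-span of $N_{d'}$ for smaller $d'$. An element of $N_d$ need not be in the image of $N_{d'}\to N_d$ for \emph{any} $d'<d$ (consider the submodule generated by a single vector sitting in large degree), so ``transporting an arbitrary element of $N_d$ along an automorphism and pulling it back along a morphism from a smaller object'' is not available: transitivity of $G_d$ on $\Hom_\CCat(c,d)$ and the existence of pullbacks move \emph{morphisms} around, not elements of $N_d$, and equality of $G_d$-orbits in $F_d/G_d$ does not convert into membership in a $\CCat$-submodule. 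Your Noetherian induction on the degree is also not doing any work: the inductive hypothesis (Noetherianity for smaller degree) is never invoked in the proposed inductive step, and what you actually need --- generation of $N_\bullet$ in degrees bounded by some $d_0$ --- is essentially the theorem itself. The Gr\"obner fallback is likewise not automatic: constructing a well-ordering on morphisms compatible with composition requires orderability hypotheses of Sam--Snowden type that are not consequences of the $\FI$-type axioms, and the paper deliberately avoids them (e.g.\ to handle $\FI^m$, whose objects are not linearly ordered).

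The paper closes this gap differently, via an ascending-chain argument in the style of Gan--Li rather than a bound on generation degree. For a chain $X^0_\bullet\subseteq X^1_\bullet\subseteq\cdots\subseteq M_\bullet$ with $M_\bullet$ free of degree $\leq c$, it applies the left-exact functor $F(X)_d=\Hom_{G_d}(M_d,X_d)$ of Lemma \ref{lem:extensions}; since $M^*\otimes M$ is free of degree $\leq c+c$, the extension maps $\Psi_d^e$ on $F(M)$ are isomorphisms for $d\geq c+c$, and by Corollary \ref{cor:injective_extension} they are \emph{injective} on $F(X^n)$ in that range. All the spaces $F(X^n)_e$ for $e\geq d$ then embed into the single finite-dimensional space $F(M)_d$, so one can choose a maximal image $F(X^{N_0})_{e_0}$; maximality forces $F(X^{N_0})_e=F(X^n)_e$ for all $n\geq N_0$, $e\geq e_0$, and Maschke's theorem upgrades equality of these $\Hom$-spaces to equality of the submodules $X^{N_0}_e=X^n_e$. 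The finitely many isomorphism classes of objects below $e_0$ (axiom 4) are handled by Noetherianity of a finite-dimensional vector space. Your mention of Corollary \ref{cor:inro-hom-stabilization} applied to test modules is pointing in this direction, but the essential ingredients --- injectivity of the extension maps for arbitrary submodules of free modules and the maximality-plus-Maschke step that converts $\Hom$-data back into submodule equality --- are absent from your sketch, so as written the proposal does not constitute a proof.
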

Theorem \ref{thm:intro-noetherian}, proved below in \S\ref{sec:noetherian}, simultaneously generalizes the results by Church-Ellenberg-Farb \cite[Theorem 1.3]{CEF} and independently by Sam-Snowden \cite[Theorem 1.3.2]{SS-GL-mod}, who proved that the category of $\FI$-modules is Neotherian; Putman-Sam \cite{PS}, who proved the same for the category of $\VI$-modules; and Wilson \cite{Wi}, who proved this for $\FI_{\mathcal{W}}$-modules. 

Gan-Li \cite{GL-EI} generalized all of these Noetherian results and found that they hold for every category with a skeleton whose objects are parameterized by $\N$ satisfying certain combinatorial conditions see (\cite[Theorem 1.1]{GL-EI}). However, their theory does not address categories whose objects are not parameterized by $\N$, such as $\FI^m$ treated in \S\ref{sec:rep_theory_of_FI} below.

\bigskip One reason Noetherian results are important in our context is that they ensure that finitely-generated $\CCat$-modules exhibit the same stabilization phenomena as with free $\CCat$-module discussed in Application 1 (although without the effective bounds on stable range).
\begin{thmx}[\textbf{Stabilization of finitely-generated $\CCat$-modules}]
If $M_\bullet$ is a finitely-generated $\CCat$-module, then the sequence of coinvariants $M_c/G_c$ is eventually constant, i.e. all induced maps are $M_c/G_c\mor{}M_d/G_d$ are isomorphisms for sufficiently large objects $c$.

More generally, for every free $\CCat$-module $F_\bullet$ the sequence of spaces $\Hom_{G_c}(F_c,M_c)$ is eventually constant in the same sense.
\end{thmx}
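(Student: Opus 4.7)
The plan is to reduce the statement to the already-established stabilization for \emph{free} modules (Theorem \ref{thm:intro-free-modules}(4)) by means of a finite presentation of $M_\bullet$, whose existence is guaranteed by the Noetherian property (Theorem \ref{thm:intro-noetherian}).

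For the first claim, since $M_\bullet$ is finitely generated one can choose a surjection $F_0 \onto M_\bullet$ from a finitely-generated free $\CCat$-module. By Theorem \ref{thm:intro-noetherian} the kernel is again finitely generated, so it admits a surjection from a finitely-generated free $F_1$, yielding an exact sequence $F_1 \to F_0 \to M_\bullet \to 0$. The coinvariants functor $(-)_c/G_c$ is right exact (being a left adjoint, or directly because it is a tensor product with the trivial $\C[G_c]$-module), so applying it object-wise produces a compatible family of right-exact sequences, and the structure maps $c\to d$ assemble these into a morphism of right-exact sequences. By Theorem \ref{thm:intro-free-modules}(4), once $c$ is large enough (relative to the degrees of $F_0$ and $F_1$) the induced maps $(F_i)_c/G_c \to (F_i)_d/G_d$ are isomorphisms for both $i=0,1$; by functoriality of cokernels the induced map $M_c/G_c \to M_d/G_d$ is then an isomorphism as well.

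For the second claim, in characteristic $0$ the averaging trick produces a natural isomorphism $\Hom_G(A,B) \cong (A^* \otimes B)/G$ for every finite group $G$ and pair of $G$-representations $A, B$. Applied object-wise this identifies $\Hom_{G_c}(F_c, M_c)$ with $(F^* \otimes M)_c / G_c$, naturally in $c$, so it suffices to prove that the coinvariants of the $\CCat$-module $F^* \otimes M_\bullet$ eventually stabilize. Since $F^*$ is free by Theorem \ref{thm:intro-free-modules}(3) and tensoring is right exact pointwise, tensoring the presentation with $F^*$ gives an exact sequence $F^* \otimes F_1 \to F^* \otimes F_0 \to F^* \otimes M_\bullet \to 0$ whose first two terms are free by Theorem \ref{thm:intro-free-modules}(1). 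In particular $F^* \otimes M_\bullet$ is finitely generated as a $\CCat$-module, so the argument of the first paragraph applies to it verbatim.

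The main potential obstacle I anticipate is ensuring that the ``canonical'' stabilization isomorphisms delivered by Theorem \ref{thm:intro-free-modules}(4) are genuinely the maps induced by the structure morphisms of $\CCat$, so that they commute with the connecting differentials of the presentation; this compatibility is precisely the content of ``canonical'' and is what makes the diagram chase on cokernels go through. The only remaining bookkeeping issue is aligning the stable ranges for the coinvariants of $F_0$, $F_1$, $F^* \otimes F_0$, and $F^* \otimes F_1$ simultaneously, which is handled by simply taking the maximum and does not affect the qualitative conclusion.
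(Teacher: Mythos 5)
Your proposal is correct and follows essentially the same route as the paper: a finite presentation $F^1\to F^0\to M_\bullet\to 0$ from the Noetherian property, right-exactness of coinvariants together with coinvariant stabilization for free modules (the paper invokes the Five-Lemma where you use the cokernel comparison, which is the same argument), and for the $\Hom$ statement tensoring the presentation with the free dual $F^*$ and identifying $\Hom_{G_c}(F_c,M_c)\cong (F^*\otimes M)_c/G_c$ by averaging. The only cosmetic difference is that the paper tensors with $F$ and then swaps in $F^*$ at the end, while you use $F^*$ from the start.
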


\subsection{Application 3: Free modules in topology}
Beyond the applications of free $\CCat$-modules to the representation theory of the category $\CCat$, they also appear explicitly in topology. In a companion paper \cite{Ga} we consider the cohomology of $\CCat$-diagrams of linear subspace arrangements, for which we show that the induced cohomology $\CCat$-module is free. An immediate consequence, stated here somewhat informally, is the following (see \cite{Ga} for the precise definitions and statements).
\begin{cor}[\textbf{Stability of $\CCat$-diagrams of linear subspace arrangements}]\label{cor:intro-application-to-rational-maps}
Every (contravariant) $\CCat$-diagram $X^\bullet$ of linear subspace arrangements, that is generated by a finite collection of subspaces, exhibits cohomological representation stability. That is, for every $i\geq 0$ the $\CCat$-module $\Ho^i(X^\bullet;\C)$ is free. In particular, there exists a single character polynomial $P_i$ of $\CCat$ that uniformly describes the $G_c$-representation $\Ho^i(X^c;\C)$ for every object $c$.

Moreover, the respective quotients $X^c/G_c$ exhibit homological stability for $\C$-coefficients, and for various systems of constructible sheaves.
\end{cor}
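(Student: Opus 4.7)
The plan is to reduce the computation of $\Ho^i(X^\bullet;\C)$ to finite combinatorial data controlled by $\CCat$, and then recognize that data as a free $\CCat$-module via the structural results developed in this paper. Concretely, I would invoke the Goresky-MacPherson formula, which expresses the cohomology of the complement of a complex linear subspace arrangement $\mathcal{A}$ as a direct sum over the intersection lattice $L(\mathcal{A})$ of (appropriately shifted) reduced cohomologies of the order complexes of the intervals $(\hat 0, F)$. If $X^\bullet$ is generated by a finite collection $\{V_j\}$ of subspaces, then every flat of $X^c$ is the intersection of images of some $V_j$'s under morphisms into $c$. Axioms~3 and~4 of Definition~\ref{def:FI-type} then imply that the $G_c$-orbits on these flats fall into finitely many combinatorial types $d$, and that each orbit of type $d$ is isomorphic as a $G_c$-set to $\Hom_\CCat(d,c)$ modulo a stabilizer that sits naturally inside $G_d$.

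The next step is to recognize each orbit as an induced summand. For each type $d$, the cohomological contribution of the orbit of flats of type $d$ is a $\C[G_c]$-module induced from a finite-dimensional $G_d$-representation $V_d^{(i)}$ depending only on the local combinatorics at $d$ (namely the Goresky-MacPherson data of the lattice interval above a representative flat). Summing over the finitely many types,
\[
\Ho^i(X^\bullet;\C) \;\cong\; \bigoplus_d \Ind_d\bigl(V_d^{(i)}\bigr),
\]
which is a free $\CCat$-module in the sense of Definition~\ref{def:intro_free}. Theorem~\ref{thm:intro-categorification} then produces the character polynomial $P_i$ describing $\Ho^i(X^c;\C)$ uniformly in $c$. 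For the stability of quotients $X^c/G_c$ with $\C$-coefficients, I would use the transfer isomorphism $\Ho^i(X^c/G_c;\C)\cong \Ho^i(X^c;\C)^{G_c}$, which over $\C$ coincides with the coinvariants; part~4 of Theorem~\ref{thm:intro-free-modules} then supplies the required stabilization. For systems of constructible sheaves one replaces ordinary cohomology by an appropriate equivariant version (e.g.\ via a Leray-type spectral sequence whose $E_2$-page is again free), and either repeats the orbit-by-orbit induction argument or, for weaker statements, invokes the Noetherianity of Theorem~\ref{thm:intro-noetherian} to propagate finite generation through the spectral sequence to the $E_\infty$-page.

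The hard part of the proof lies entirely in the geometric/combinatorial input of the first step: verifying carefully that the intersection poset of $X^c$ decomposes into $\CCat$-orbits indexed by a finite list of combinatorial types $d$, and that the local Goresky-MacPherson data transforms covariantly under $\CCat$-morphisms so that the sum above is genuinely a direct sum of induced modules $\Ind_d(V_d^{(i)})$ rather than some more intricate extension. Once this geometric input is in place — a verification carried out in the companion paper \cite{Ga} — everything else follows formally from Theorems~\ref{thm:intro-categorification}, \ref{thm:intro-free-modules}, and~\ref{thm:intro-noetherian}.
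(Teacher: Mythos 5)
Your proposal matches the paper's intent: the paper gives no proof of this corollary beyond citing the companion paper \cite{Ga}, since the substantive content --- that $\Ho^i(X^\bullet;\C)$ is a free $\CCat$-module --- is exactly what is established there, and the remaining claims (the character polynomial $P_i$ via Theorem~\ref{thm:intro-categorification}, and homological stability of the quotients $X^c/G_c$ via transfer together with the coinvariant stabilization of Theorem~\ref{thm:intro-free-modules}(4)) follow formally, just as you arrange them. Your Goresky--MacPherson sketch is a reasonable outline of the geometric argument carried out in \cite{Ga}, and you correctly identify the one genuinely delicate point --- that the orbit decomposition of the intersection data must be natural in $c$, so that the contribution of each combinatorial type is the $\CCat$-module $\Ind_d(V_d^{(i)})$ and not merely a stabilizer-induced representation object by object --- as the step deferred to the companion paper.
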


\subsection{Acknowledgments} I wish to thank Benson Farb and Jesse Wolfson for many helpful conversations and suggestions that helped shape this work into its current form. I also thank Kevin Casto for explaining the arguments in Gan-Li's paper and how my work fits in with theirs.

\section{Preliminaries}\label{sec:prelim}
Let $\CCat$ be a category. Objects of $\CCat$ will typically be denoted by $c$, $d$, and so on. 

The categories with which we shall be working will have only injective morphisms. This typically precludes the possibility of having push-out objects. The following definition provides a means for salvaging some notion of a push-out diagram subject to this constraint.
\begin{definition}[\textbf{Weak push-out}]\label{def:weak-pushout}
A \emph{weak push-out} diagram is a pullback diagram
\squareDiagram{p}{c_1}{c_2}{d}{\tilde{f}_1}{\tilde{f}_2}{f_1}{f_2}
with the following universal property: for every other pullback diagram
\squareDiagram{p}{c_1}{c_2}{z}{\tilde{f}_1}{\tilde{f}_2}{h_1}{h_2}
there exists a unique morphism $d\mor{h} z$ that makes all the relevant diagrams commute.
We call $d$ the \emph{weak push-out object} and denote it by $c_1\coprod_p c_2$. The unique map $h$ induced from a pair of maps $c_i\mor{h_i}z$ is denoted by $h_1\coprod_p h_2$.
\end{definition}
This is similar to a usual push-out, but with ``all" commutative squares replaced by only pullback squares. When starting from a category that has push-outs, such as $\Set$ and $\Vect_k$, and passing to the subcategory that includes only injective maps, we lose the push-out structure. However, weak push-outs persist, and retain most of the same function.

\bigskip A standard notation that we will use throughout is the following.
\begin{definition}
We say that $c\leq d$ if there exists morphisms $c\mor{}d$.
\end{definition}
In categories of $\FI$ type (see Definition \ref{def:FI-type} above) this preorder relation between objects is essentially an order, i.e. if $c\leq d$ and $d\leq c$ then every morphism $c\mor{}d$ is invertible (for an explanation see \cite[Lamma 3.28]{Ga}). However, as noted in part (5) of Definition \ref{def:FI-type}, push-outs typically don't exist in categories of $\FI$-type and we adjust the definition by demanding the following property instead.

\begin{definition}[\textbf{Categories of $\FI$ type}]
A category $\CCat$ is said to be \emph{of $\FI$ type} is it satisfies axioms (1)-(4) from Definition \ref{def:FI-type}, and in addition:
\begin{enumerate}
\item[5.] $\CCat$ has pullbacks and weak push-outs, i.e. for every pair of morphisms $p\mor{f_i}c_i$ there exists a weak push-out $c_1\coprod_p c_2$; and for every pair $c_i \mor{g_i} d$ there exists a pullback $c_1 \times_{d} c_2$.
\end{enumerate}
\end{definition}
It seems possible that some of the theory should carry over to compact groups or even to semi-simple groups, but this direction will not be perused here.

\bigskip The primary objects of study are the representation of $\CCat$. These are the $\CCat$-modules defined in Definition \ref{def:intro_modules}. Our goal is to understand the category of $\CCat$-modules and relate it to the categories of representations of the individual automorphism groups $G_c$.

\subsection{Binomial sets and Character Polynomials} \label{subsec:char_poly}
The character polynomials for the symmetric groups are class functions simultaneously defined on $S_n$ for all $n$. These objects are closely linked with the phenomenon of representation stability in that the character of a representation-stable sequence is eventually given by a single character polynomial (see \cite{CEF}). We will now define character polynomials for a general category of $\FI$ type.

The following notion generalizes the collection of subset of size $k$ inside a set of $n$ elements. It will be used below in the definition of character polynomials.
\begin{definition}[\textbf{Binomial set}]
\label{def:binom_set}
Let $c$ and $d$ be two objects of $\CCat$. The group of automorphisms $G_c$ acts on $\Hom_\CCat(c,d)$ on the right by precomposition. Denote the quotient $\Hom_\CCat(c,d)/G_c$ by $\binom{d}{c}$. We will call this the \emph{binomial set, $d$ choose $c$}. If $c\mor{f}d$ is a morphism, we denote its class in $\binom{d}{c}$ by $[f]$.

Since the set $\Hom_\CCat(c,d)$ admits a left action by $G_d$, and this action commutes with the right action of $G_c$, the binomial set $\binom{d}{c}$ acquires a $G_d$ action naturally by $\sigma([f])=[\sigma\circ f]$. 
\end{definition}
Note that in the case of $\CCat=\FI$, the category of finite sets and injections, the binomial set $\binom{n}{k}$ is naturally in bijection with the collection of size $k$ subsets of $n$ (hence the terminology). Replacing $\FI$ by $\VI_\F$, the category of finite dimensional $\F$-vector spaces and injective linear functions, the binomial set $\binom{n}{k}$ is naturally the Grassmanian of $k$-planes in $\F^n$.

\begin{definition}[\textbf{Character polynomial}] \label{def:char_poly}
Let $c$ be an object of $\CCat$ and $\mu\subseteq G_c$ a conjugacy class. In this case we will denote $|\mu|=c$. The \emph{indicator character polynomial} of $\mu$ is the $\C$-valued class function $\binom{X}{\mu}$ simultaneously defined on all $G_d$ by
\begin{equation}\label{eq:indicator_poly}
\binom{X}{\mu}: \left(\sigma\in G_d\right) \mapsto  \left|\left\{ [f]\in\binom{d}{c} \mid \exists \psi\in \mu \text{ s.t. } \sigma\circ f = f\circ \psi \right\}\right|.
\end{equation}
The \emph{degree} of $\binom{X}{\mu}$ is defined to be $\deg(\binom{X}{\mu}):=|\mu|$.

A \emph{character polynomial} $P$ is a $\C$-linear combination of such simultaneous class functions. We say that the \emph{degree} of $P$ is $\leq d$ for an object $d$ if for every indicator $\binom{X}{\mu}$ that appears in $P$ nontrivially we have $|\mu|\leq d$. We denote this by $\deg(P)\leq d$.
\end{definition}

The following lemma shows that the above definition indeed gives rise to well-defined class functions.
\begin{lem}
The function $\binom{X}{\mu}$ is a class function of every group $G_d$. Furthermore, its definition in Equation \ref{eq:indicator_poly} does not depend on the choice of representative $f\in [f]$.
\end{lem}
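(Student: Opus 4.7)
The plan is to verify two independence statements directly from the definitions, both of which reduce to the observation that a conjugacy class $\mu \subseteq G_c$ is invariant under conjugation by elements of $G_c$.

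First I would establish that the condition defining the counting set does not depend on the representative $f \in [f]$. Suppose $f' = f \circ \gamma$ for some $\gamma \in G_c$. If there exists $\psi \in \mu$ with $\sigma \circ f = f \circ \psi$, then
\[
\sigma \circ f' = \sigma \circ f \circ \gamma = f \circ \psi \circ \gamma = f \circ \gamma \circ (\gamma^{-1} \psi \gamma) = f' \circ \psi',
\]
where $\psi' = \gamma^{-1} \psi \gamma$. Since $\mu$ is a conjugacy class in $G_c$, $\psi' \in \mu$. The reverse implication is symmetric, so the condition ``$\exists \psi \in \mu$ with $\sigma \circ f = f \circ \psi$'' depends only on the class $[f] \in \binom{d}{c}$. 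Note that using the injectivity of $f$ (axiom 2 of $\FI$ type), the element $\psi$ satisfying $\sigma \circ f = f \circ \psi$ is actually \emph{uniquely determined} by $f$ and $\sigma$, although we do not need this for the present lemma.

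Next, I would show that $\binom{X}{\mu}$ is a class function on $G_d$. Given $\tau \in G_d$, I claim that the $G_d$-action $[f] \mapsto [\tau \circ f]$ on $\binom{d}{c}$ restricts to a bijection between the counting set for $\sigma$ and the counting set for $\tau \sigma \tau^{-1}$. Indeed, if $\sigma \circ f = f \circ \psi$ with $\psi \in \mu$, then
\[
(\tau \sigma \tau^{-1}) \circ (\tau \circ f) = \tau \circ \sigma \circ f = \tau \circ f \circ \psi = (\tau \circ f) \circ \psi,
\]
exhibiting $[\tau \circ f]$ in the counting set for $\tau \sigma \tau^{-1}$ with the same $\psi \in \mu$. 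The inverse bijection is given by $[g] \mapsto [\tau^{-1} \circ g]$, so the two sets have the same cardinality, proving $\binom{X}{\mu}(\sigma) = \binom{X}{\mu}(\tau \sigma \tau^{-1})$.

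There is no substantive obstacle here — the lemma is essentially a bookkeeping check that the two natural symmetries (precomposition by $G_c$ and postcomposition by $G_d$) interact correctly with the defining condition, using only the conjugation-invariance of $\mu$. The only point requiring slight care is verifying that the witnessing element $\psi$ can be adjusted by conjugation in $G_c$ without leaving $\mu$, which is where the hypothesis that $\mu$ is a conjugacy class (rather than an arbitrary subset) enters.
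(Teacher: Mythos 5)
Your proof is correct and follows essentially the same route as the paper: precomposition by $\gamma \in G_c$ conjugates the witness $\psi$ within $\mu$ (well-definedness), and postcomposition by $\tau \in G_d$ gives a bijection between the counting sets for $\sigma$ and $\tau\sigma\tau^{-1}$ (class function). No issues.
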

\begin{proof}
First to see that Equation \ref{eq:indicator_poly} does not depend on the choice of $f$, suppose $f'$ is another representative of $[f]\in \binom{d}{c}$. Then there exists some $g\in G_c$ such that $f'=f\circ g$. Then for every $\sigma\in G_d$ and $\psi\in\mu$
$$
\sigma\circ f = f\circ \psi \iff \sigma\circ f' = (f\circ \psi) \circ g = f' \circ (g^{-1} \psi g)
$$
and $g^{-1} \psi g$ belongs to $\mu$ as well.

Lastly, to see that we get a class function take $\sigma' = h \sigma h^{-1}$. Then $[f]\in \binom{d}{c}$ satisfies $\sigma\circ f = f\circ \psi$ if and only in $[h\circ f]$ satisfies
$$
\sigma'\circ (h\circ f) = h \sigma h^{-1} (h\circ f) = h \circ (f\circ \psi) = (h\circ f)\circ \psi.
$$
If we denote by $U_\mu(\sigma)$ the set of classes $[f]\in \binom{d}{c}$ which is counted in Equation \ref{eq:indicator_poly} for $\sigma$, then we see that $U_\mu(h\sigma h^{-1})=h(U\mu(\sigma))$, and in particular these sets have equal cardinality.
\end{proof}
\begin{example}[\textbf{$\FI$ character polynomials}]\label{ex:FI_characters}
For $\CCat=\FI$ the automorphism group of the object $n=\{0,1,\ldots n-1\}$ is the symmetric group $S_n$. For any $k\in \N$ a conjugacy class in $S_k$ is described by a cycle type, $\mu = (\mu_1,\mu_2,\ldots,\mu_k)$ where $\mu_i$ is the number of $i$-cycles. For any other $n\in \N$, if we denote by $X_i$ the class function on $S_n$
$$
X_i(\sigma) = \# \text{ of $i$-cycles in }\sigma
$$
then we claim that
\begin{equation}\label{eq:FI_indicator_poly}
\binom{X}{\mu}(\sigma) = \binom{X_1(\sigma)}{\mu_1}\ldots \binom{X_k(\sigma)}{\mu_k}.
\end{equation}

Indeed, the class $[f]\in \binom{n}{k}$ of an injection $k\mor{f}n$ corresponds to the subset $Im(f)\subseteq n$. The condition that $\sigma\circ f = f\circ \psi$ translates to saying that $Im(f)$ is invariant under $\sigma$ and that the induced permutation on this subset has cycle type $\mu$. Then for a given $\sigma\in S_n$ the right-hand side of Equation \ref{eq:FI_indicator_poly} counts the number of ways to assemble such an invariant subset from the cycles of $\sigma$.
\end{example}
\cite{CEF} give Equation \ref{eq:FI_indicator_poly} as the definition of $\binom{X}{\mu}$. Thus our definition of character polynomials extends the classical notion of character polynomials for the symmetric groups to other classes of groups.
\begin{example}[\textbf{$\VI$ character polynomials}]
For $\CCat=\VI_\F$ the automorphism group of the object $[n] = \F^n$ is $\operatorname{GL}_n(\F)$. We describe the degree $1$ indicators. A conjugacy class in $\operatorname{GL}_1(\F)=\F^\times$ is just a non-zero element $\mu\in \F$. For every $n\in \N$ the function $\binom{X}{\mu}$ on a matrix $A\in\operatorname{GL}_n(\F)$ is given by
\begin{equation}
\binom{X}{\mu}(A) = \# \text{ of 1D eigenspaces of $A$ with eigenvalue } \mu.
\end{equation}
These are the $\VI$ analogs of $X_1$ on $S_n$, which counts the number of fixed points of a permutation.
\end{example}

\section{Free $\CCat$-modules}\label{sec:free-modules}
This section is devoted to defining free $\CCat$-modules and proving that, when $\CCat$ is of $\FI$ type, these modules satisfy the fundamental properties stated in Theorem \ref{thm:intro-free-modules}. Note that the statements and proofs in this section are essentially set-theoretic in nature, and therefore hold in a more general setting of $\CCat$-modules over any ring $R$. For concreteness we will only describe here the results with $R=\C$.

Free $\CCat$-modules are defined using a collection of left-adjoint functors. For every object $c$ there is a natural restriction functor
\begin{equation}
\Mod{\CCat} \mor{\Res_{c}} \Mod{\C[G_c]}\, , \; M_\bullet\mapsto M_c
\end{equation}
Following \cite{tD}, this functor admits a left-adjoint as follows.
\begin{definition} [\textbf{Induction $\CCat$-modules}] \label{def:Ind_functor}
Let $\Ind_c: \Mod{\C[G_c]} \mor{} \Mod{\CCat}$ be the functor that sends a $G_c$-representation $V$ to the $\CCat$-module
\begin{equation}
\Ind_c(V)_\bullet = \C[\Hom(d,\bullet)]\otimes_{G_d} V
\end{equation}
where morphisms in $\CCat$ act on these spaces naturally through their action on $\Hom(c,\bullet)$.

We call a $\CCat$-module of this form an \emph{induction module} of degree $c$, and denote $$\deg(\Ind_c(V))=c.$$
\end{definition}
\cite{tD} shows that the functor $\Ind_c$ is a left adjoint to $\Res_c$. Recall that in Definition \ref{def:intro_free} we called direct sum of induction modules free. The following additional terminology will also be useful.
\begin{definition} [\textbf{Degree of a free module}]
We say that a free $\CCat$-module $M_\bullet$ has \emph{degree} $\leq d$ if for every induction module $\Ind_c(V)$ that appears in $M_\bullet$ nontrivially we have $c\leq d$. In this case we denote $\deg(M_\bullet)\leq d$.

A \emph{virtual} free $\CCat$-module is a formal $\C$-linear combination of induction modules, e.g.
$$
\oplus_{i=1}^n \lambda_i \Ind_{c_i}(V_i) \; \text{where } \lambda_i\in \C.
$$
We extend the induction functors $\Ind_c$ linearly to virtual $G_c$-representations, i.e.
$$
\Ind_c( \oplus \lambda_i V_i ) := \oplus \lambda_i \Ind_c (V_i).
$$
\end{definition}
We propose that (virtual) free $\CCat$-modules are a categorification of character polynomials, much like the case for any finite group $G$ where (virtual) $G$-representations categorify class functions on $G$ .
\begin{definition}[\textbf{The character of a $\CCat$-module}]
If $M_\bullet$ is a $\CCat$-module, its \emph{character} is the simultaneous class function
$$
\chi_M : \coprod_{c} G_c \mor{} \C
$$
that for every object $c$ sends the group $G_c$ to the character of the $G_c$-representation $M_c$.
\end{definition}

One can express the character of induction modules in terms of indicator character polynomials, as follows.
\begin{lem}[\textbf{Character of induction modules}]\label{lem:character_of_ind}
If $V$ is any $G_c$-representation whose character is $\chi_V$, then the character of $\Ind_c(V)$ is given by
\begin{equation}\label{eq:character_of_ind}
\chi_{\Ind_c(V)} = \sum_{\mu \in \operatorname{conj}(G_c)} \chi_V(\mu) \binom{X}{\mu} 
\end{equation}
where $\operatorname{conj}(G_c)$ is the set of conjugacy classes of $G_c$, and $\chi_V(\mu)$ is the value $\chi_V$ takes on any $g\in\mu$. In particular we see that the character of $\Ind_c(V)$ is a character polynomial of degree $c$.
\end{lem}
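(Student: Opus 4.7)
The plan is to compute the trace of $\sigma \in G_d$ acting on the vector space $\Ind_c(V)_d = \C[\Hom(c,d)] \otimes_{\C[G_c]} V$ by choosing a convenient basis indexed by the orbit set $\binom{d}{c}$, and then reorganize the sum according to conjugacy classes in $G_c$.

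First, I would describe $\Ind_c(V)_d$ as a $G_c$-orbit-indexed direct sum of copies of $V$. Since every morphism in $\CCat$ is a monomorphism, the right action of $G_c$ on $\Hom_\CCat(c,d)$ is free: if $f \circ g = f$ then $g = \Id$. Hence for a chosen set of representatives $\{f_\alpha\}$ of the orbits in $\binom{d}{c} = \Hom_\CCat(c,d)/G_c$, there is a vector space decomposition
\begin{equation*}
\Ind_c(V)_d \;=\; \bigoplus_{[f] \in \binom{d}{c}} f \otimes V,
\end{equation*}
where each summand $f \otimes V$ is isomorphic to $V$ via $v \mapsto f \otimes v$.

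Next I would analyze how $\sigma \in G_d$ permutes these summands. For each representative $f$, the element $\sigma \cdot (f \otimes v) = (\sigma \circ f) \otimes v$ lies in the summand indexed by $[\sigma \circ f]$. If $[\sigma \circ f] \neq [f]$, then this summand makes no contribution to the trace. If $[\sigma \circ f] = [f]$, there exists $\psi_f \in G_c$ with $\sigma \circ f = f \circ \psi_f$, and this $\psi_f$ is \emph{unique} because $f$ is a monomorphism. On the summand $f \otimes V$, the action of $\sigma$ then becomes $f \otimes v \mapsto f \otimes \psi_f v$, whose trace is $\chi_V(\psi_f)$. Collecting contributions from all fixed summands gives
\begin{equation*}
\chi_{\Ind_c(V)}(\sigma) \;=\; \sum_{\substack{[f] \in \binom{d}{c}\\ \exists\, \psi_f \in G_c:\, \sigma \circ f = f \circ \psi_f}} \chi_V(\psi_f).
\end{equation*}

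Finally, I would regroup this sum by the conjugacy class of $\psi_f$. One must check that the class of $\psi_f$ is independent of the choice of representative $f \in [f]$: replacing $f$ by $f \circ g$ replaces $\psi_f$ by $g^{-1} \psi_f g$, which lies in the same conjugacy class $\mu$, so $\chi_V(\psi_f)$ depends only on $[f]$. Thus for each $\mu \in \operatorname{conj}(G_c)$ the number of $[f] \in \binom{d}{c}$ with $\psi_f \in \mu$ is exactly $\binom{X}{\mu}(\sigma)$ by Definition \ref{def:char_poly}, yielding the desired identity. The degree statement is then immediate since each $\binom{X}{\mu}$ appearing has $|\mu| = c$.

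The only non-routine point is recognizing that the $G_c$-action on $\Hom_\CCat(c,d)$ is free and that the ``return permutation'' $\psi_f$ is well defined (and well defined up to conjugacy on the orbit); once both facts are noted, the computation of the trace is a direct unraveling of the induced representation.
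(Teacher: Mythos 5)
Your proposal is correct and follows essentially the same route as the paper: decompose $\Ind_c(V)_d$ into summands indexed by $\binom{d}{c}$, observe that $\sigma$ permutes them and that a fixed summand contributes $\chi_V(\psi_f)$ to the trace, then regroup by the conjugacy class of $\psi_f$. Your added remarks on the freeness of the $G_c$-action, the uniqueness of $\psi_f$, and its well-definedness up to conjugacy are slightly more explicit than the paper's write-up but amount to the same argument.
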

\begin{proof}
Since all morphisms in $\CCat$ are monomorphisms, it follows for every object $d$ the equivalence class $f\circ G_c = [f]\in \binom{d}{c}$ is a right $G_c$-torsor. Thus there is an isomorphism of vector spaces
\begin{equation} \label{eq:ind_decomposition}
\Ind_c(V)_d = \C[\Hom_\CCat(c,d)]\topotimes[G_c] V = \topoplus[{[f]\in \binom{d}{c}}] \C([f])\topotimes[G_c]V \cong \topoplus[{[f]\in \binom{d}{c}}] V
\end{equation}
where the group $G_d$ permutes the summands through its action on $\binom{d}{c}$. It follows that the trace of $\sigma\in G_d$ gets a contribution from the summand $\C([f])\otimes_{G_c} V$ if and only if $\sigma([f])=[f]$. Consider such $[f]\in \operatorname{Fix}(\sigma)$, i.e. there exists some $\psi\in G_c$ such that $\sigma\circ f = f\circ \psi$. We get a commutative diagram
\squareDiagram{\C([f])\otimes_{G_c}V}{\C([f])\otimes_{G_c}V}{V}{V}{\sigma}{\cong}{\cong}{\psi}
so the trace of $\sigma|_{\C([f])\otimes_{G_c}V}$ is precisely $\chi_V(\psi)$. We get a formula for the character
\begin{equation}
\chi_{\Ind_c(V)}(\sigma) = \sum_{\substack{[f]\in \binom{d}{c}\\ \exists\psi (\sigma\circ f = f\circ \psi)}} \chi_V(\psi).
\end{equation}
Arranging this sum according to the conjugacy class of $\psi$ we get the equality claimed by Equation \ref{eq:character_of_ind}.
\end{proof}

A corollary or Lemma \ref{lem:character_of_ind} is that free $\CCat$-modules indeed categorify character polynomials.
\begin{thm}[\textbf{Categoricifaction of character polynomials}] \label{thm:categorification}
Character polynomials of degree $\leq d$ are precisely the characters of virtual free $\CCat$-modules of degree $\leq c$.
\end{thm}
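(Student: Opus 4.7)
The plan is to prove both inclusions using Lemma \ref{lem:character_of_ind} as the main tool, together with the standard fact from character theory that, over $\C$, irreducible characters of a finite group span the space of class functions.

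For the easy direction, suppose $M_\bullet = \oplus_{i} \lambda_i \Ind_{c_i}(V_i)$ is a virtual free $\CCat$-module of degree $\leq d$, meaning each $c_i \leq d$. Characters are additive under direct sum and scale linearly, so
\begin{equation*}
\chi_{M} \;=\; \sum_i \lambda_i \, \chi_{\Ind_{c_i}(V_i)} \;=\; \sum_{i}\sum_{\mu\in \operatorname{conj}(G_{c_i})} \lambda_i\,\chi_{V_i}(\mu)\binom{X}{\mu},
\end{equation*}
by Lemma \ref{lem:character_of_ind}. Each indicator $\binom{X}{\mu}$ appearing here has $|\mu|=c_i\leq d$, so $\chi_M$ is a character polynomial of degree $\leq d$.

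For the reverse direction, it suffices to realize each indicator $\binom{X}{\mu}$ as the character of some virtual free module of degree $|\mu|$, since character polynomials are $\C$-linear combinations of indicators and virtual free modules are closed under $\C$-linear combinations. Fix a conjugacy class $\mu \subseteq G_c$ with $|\mu|=c$. Lemma \ref{lem:character_of_ind} together with linearity of $\Ind_c$ in virtual representations tells us that for any virtual $G_c$-representation $V$,
\begin{equation*}
\chi_{\Ind_c(V)} \;=\; \sum_{\nu\in \operatorname{conj}(G_c)} \chi_V(\nu)\,\binom{X}{\nu}.
\end{equation*}
Thus to realize $\binom{X}{\mu}$, we need a virtual $G_c$-representation $V_\mu$ whose character is the indicator class function $\mathbb{1}_\mu$ (taking value $1$ on $\mu$ and $0$ on other conjugacy classes of $G_c$). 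Here is where I use the assumption that we work over $\C$: by classical character theory of finite groups in characteristic zero, the irreducible characters of $G_c$ form a basis for the $\C$-vector space of class functions on $G_c$. Writing $\mathbb{1}_\mu = \sum_j b_j\,\chi_{W_j}$ for irreducibles $W_j$ of $G_c$ and $b_j\in\C$, I set $V_\mu := \oplus_j b_j W_j$ as a virtual representation; then $\chi_{V_\mu}=\mathbb{1}_\mu$ and hence $\chi_{\Ind_c(V_\mu)} = \binom{X}{\mu}$, realizing the indicator as the character of a virtual induction module of degree $c$.

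Assembling these building blocks, any character polynomial $P = \sum_\mu a_\mu \binom{X}{\mu}$ with each $|\mu|\leq d$ is the character of the virtual free module $\oplus_\mu a_\mu \Ind_{|\mu|}(V_\mu)$, which has degree $\leq d$ by construction. There is no real obstacle here beyond being careful that the ``virtual'' mechanism with complex scalars is indeed what is needed: if one insisted on honest (non-virtual) modules the statement would fail, since indicator class functions need not be honest characters. The argument uses characteristic zero only to invert the change of basis between conjugacy-class indicators and irreducible characters of each $G_c$.
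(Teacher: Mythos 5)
Your proposal is correct and follows essentially the same route as the paper: the forward direction is immediate from Lemma \ref{lem:character_of_ind} and linearity, and the reverse direction realizes each indicator $\binom{X}{\mu}$ as $\chi_{\Ind_c(V_\mu)}$ for a virtual $G_c$-representation $V_\mu$ whose character is the indicator class function, using that irreducible characters span class functions over $\C$. The only difference is that you spell out the forward inclusion, which the paper leaves implicit.
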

\begin{proof}
It is sufficient to show that every $\binom{X}{\mu}$ is the character of some virtual free $\CCat$-module of degree $\leq |\mu|$. Denote $c=|\mu|$ and consider the indicator class function on $G_c$
$$
\chi_\mu(\psi)= \begin{cases}
1 \quad & \psi\in \mu \\
0 \quad & \psi\notin \mu.
\end{cases}
$$
Since the characters of $G_c$-representations form a basis for the class functions on $G_c$, there exist $G_c$-representations $V_1,\ldots,V_n$ and complex numbers $\lambda_1,\ldots,\lambda_n$ such that the virtual representation
$$
V_\mu = \oplus_{i=1}^n \lambda_i V_i
$$
has character $\chi_\mu$. Then by Lemma \ref{lem:character_of_ind} and linearity it follows that
$$
\chi_{\Ind_c(V_{\mu})} = \binom{X}{\mu}.
$$
\end{proof}

\subsection{Tensor products}
The categorification of pointwise products of character polynomials is the tensor product of free $\CCat$-modules. The goal of this subsection is to show that the product of two free modules is itself free.
\begin{definition}[\textbf{Tensor product of $\CCat$-modules}]
If $M_\bullet$ and $N_\bullet$ are two $\CCat$-module, their tensor product $\left(M\otimes N\right)_\bullet$ is the $\CCat$-module
\begin{equation}
\left(M\otimes N\right)_d = M_d \otimes N_d
\end{equation}
where a morphism $c\mor{f}d$ acts naturally by $M(f)\otimes N(f)$.
\end{definition}
At the level of characters, the tensor product corresponds to pointwise multiplication:
\begin{equation}
\chi_{M\otimes N} = \chi_M \cdot \chi_N.
\end{equation}

The main result of this subsection is the parts $(1)$ and $(2)$ of Theorem \ref{thm:intro-free-modules}. The following definition gives meaning to addition of objects so as to make the degree additive.
\begin{definition}[\textbf{Sum of objects}]\label{def:sums}
If $c_1$ and $c_2$ are two object of $\CCat$, then $c_1+c_2$ denotes the collection of objects $d$ that satisfy
$$
c_1\coprod_p c_2 \leq d
$$
for every weak push-out of $c_1$ and $c_2$. If $d$ belongs to the collection $c_1+c_2$ we denote $d\geq c_1+c_2$, i.e.
$$
d\geq c_1+c_2 \iff d\in c_1+c_2.
$$

If $M_\bullet$ is a free $\CCat$-module, we say that $\deg(M) \leq c_1+c_2$ if the degree is $\leq d$ for every $d\in c_1+c_2$.
\end{definition}
\begin{note}
If the collection $c_1+c_2$ contains an essential minimum object $d_0$, then we can identify $c_1+c_2$ with this minimum. In this case saying that $\deg(M)\leq c_1+c_2$ is equivalent to saying $\deg(M)\leq d_0$. In all the examples we currently know, the essential minimum object of $c_1+c_2$ is the weak coproduct $c_1\coprod_\emptyset c_2$. In particular, when $\CCat$ has a skeleton whose objects are parameterized naturally by $\N$ then the object ``$n_1+n_2$" coincides with the standard addition $n_1+n_2$ (hence the notation).
\end{note}

At the level of character polynomials Theorem \ref{thm:intro-free-modules}(1) translates into the following result.
\begin{cor}[\textbf{Closure under products}]
The collection of character polynomials forms an algebra under pointwise products, and the degree is additive with respect products. Namely, if $P$ and $Q$ are character polynomials of respective degrees $\leq c_1$ and $\leq c_2$, then their product $P\cdot Q$ is a character polynomial of degree $\leq c_1+c_2$.
\end{cor}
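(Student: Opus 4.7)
The plan is to obtain this corollary as the decategorified shadow of the tensor product statements in Theorem \ref{thm:intro-free-modules}(1)--(2), via the categorification established in Theorem \ref{thm:categorification}.

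First I would invoke Theorem \ref{thm:categorification} to lift $P$ and $Q$ to virtual free $\CCat$-modules $M_\bullet$ and $N_\bullet$ of degrees $\leq c_1$ and $\leq c_2$ respectively, with $\chi_M = P$ and $\chi_N = Q$. Since at each object $d$ the $G_d$-character of the tensor product $M_d \otimes_\C N_d$ is the pointwise product $\chi_{M_d}\cdot \chi_{N_d}$, the $\CCat$-module $M\otimes N$ has character $P \cdot Q$. Applying Theorem \ref{thm:intro-free-modules}(1) and (2), $M\otimes N$ is again a virtual free $\CCat$-module of degree $\leq c_1 + c_2$. Running Theorem \ref{thm:categorification} in reverse, its character $P \cdot Q$ is therefore a character polynomial of degree $\leq c_1 + c_2$. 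The closure of character polynomials under $\C$-linear combinations is built into Definition \ref{def:char_poly}, so together with closure under products this shows that they form a graded algebra.

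The main work — and the step I expect to be the main obstacle — lies in the underlying tensor product closure Theorem \ref{thm:intro-free-modules}(1)--(2) itself, not in the corollary. To prove it, I would reduce to the case of two induction generators $\Ind_{c_1}(V_1)$ and $\Ind_{c_2}(V_2)$ and analyze $(\Ind_{c_1}(V_1)\otimes \Ind_{c_2}(V_2))_d$ at each object $d$. Using the decomposition from Equation \ref{eq:ind_decomposition}, this space is a direct sum indexed by the $G_d$-set $\binom{d}{c_1}\times \binom{d}{c_2}$. The key geometric input is that a pair of morphisms $(f_1, f_2)$ with common target $d$ factors canonically through their pullback $p = c_1\times_d c_2$, so by the universal property of the weak pushout $c_1\coprod_p c_2$ admits a unique map to $d$; the transitivity axiom from Definition \ref{def:FI-type}(3) then ensures each $G_d$-orbit on $\binom{d}{c_1}\times \binom{d}{c_2}$ is indexed by the isomorphism type of the resulting weak pushout. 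Assembling the orbit-stabilizer decomposition identifies each orbit summand with an induction module from $c_1\coprod_p c_2$, hence of degree $\leq c_1 + c_2$ in the sense of Definition \ref{def:sums}, and extending additively to arbitrary free modules concludes the argument. Once that structural result is in hand, the corollary above is the essentially formal character-theoretic consequence described in the first two paragraphs.
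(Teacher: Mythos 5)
Your proposal is correct and follows essentially the paper's own route: the corollary is the decategorification of Theorem \ref{thm:categorification} combined with closure of free modules under tensor products, and your sketch of Theorem \ref{thm:intro-free-modules}(1) — factoring a pair of morphisms through their pullback and then the weak pushout, and assembling the pieces into induction modules from $c_1\coprod_p c_2$ — is exactly the content of Lemma \ref{lem:products_set_version} as used in the paper's proof. The only cosmetic difference is that you phrase the decomposition orbit-by-orbit at a fixed object $d$ (and the claim that $G_d$-orbits are \emph{indexed} by the isomorphism type of the weak pushout is slightly loose, though nothing in the argument needs that injectivity), whereas the paper packages the same idea as a natural isomorphism of $\Hom$-functors.
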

\begin{note}
It is not immediately clear that the product of two expressions $\binom{X}{\mu}$ and $\binom{X}{\nu}$ can be expanded in terms of other such expressions, but we now see they can. To demonstrate the nontriviality of this statement consider the standard binomial coefficients: for $X = \binom{X}{1}$ we have an expansion
$$
X\cdot \binom{X}{k} = (k+1)\binom{X}{k+1}+ k\binom{X}{k}
$$
\begin{problem}
Find general formula for the expansion of $\binom{X}{k_1}\binom{X}{k_2}$ in terms of $\binom{X}{k}$'s.
\end{problem}
\end{note}

The proof of Theorem \ref{thm:intro-free-modules}(1) will use the following definitions and lemmas. First we need an easy technical observation.
\begin{lem}[\textbf{Pull-back invariance}]\label{lem:pullback-invariance}
Suppose
\squareDiagram{a}{b_1}{b_2}{c}{f_1}{f_2}{g_1}{g_2}
is some commutative diagram in $\CCat$ and $c\mor{h}d$ is some monomorphism. By composing with $h$ we get another diagram
\squareDiagram{a}{b_1}{b_2}{d}{f_1}{f_2}{h\circ g_1}{h\circ g_2}

If one of these diagrams is a pull-back, then so is the other.
\end{lem}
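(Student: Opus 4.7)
The plan is to invoke the universal property of pullbacks directly, using the fact that $h$ is a monomorphism to shuttle cone equations between the two diagrams. Note that $h$ is automatically monic in a category of $\FI$ type, by axiom (2) of Definition \ref{def:FI-type}, and this monicness is the only feature of $h$ that the argument needs.

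First I would observe that the two squares are simultaneously commutative: since $h$ is mono, $(h\circ g_1)\circ f_1 = (h\circ g_2)\circ f_2$ if and only if $g_1\circ f_1 = g_2\circ f_2$, so one square commutes precisely when the other does. Now suppose the first square is a pullback, and let $(z, p_1\colon z\to b_1, p_2\colon z\to b_2)$ be a cone over the cospan $b_1\to d\leftarrow b_2$ of the second square, i.e.\ $(h\circ g_1)\circ p_1 = (h\circ g_2)\circ p_2$. Cancelling $h$ on the left gives $g_1\circ p_1 = g_2\circ p_2$, so $(z,p_1,p_2)$ is also a cone over $b_1\to c\leftarrow b_2$. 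The pullback property of the first square produces a unique $u\colon z\to a$ with $f_1\circ u = p_1$ and $f_2\circ u = p_2$, and this $u$ is precisely the factorization required for the second square. Uniqueness transfers directly, as the factorization condition involves only $f_1$ and $f_2$.

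The converse direction is entirely symmetric: if the second square is a pullback and $(z,p_1,p_2)$ is a cone over $b_1\to c\leftarrow b_2$, then postcomposing the equation $g_1\circ p_1=g_2\circ p_2$ by $h$ yields a cone over $b_1\to d\leftarrow b_2$, and the universal property of the second square produces the desired unique factorization $z\to a$.

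There is no real obstacle in this lemma; it is a standard ``two-out-of-three'' observation for pullbacks along monomorphisms. The only point worth stressing in the write-up is the essential use of $h$ being mono: without it one cannot cancel $h$ from a cone equation to transfer commutativity backward, and the statement genuinely fails.
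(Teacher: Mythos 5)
Your proof is correct: transferring cones across the two cospans via postcomposition with $h$ and cancellation of the monomorphism $h$, and noting that the factorization condition through $a$ involves only $f_1,f_2$, is exactly the standard argument. The paper states this lemma without proof (calling it an easy technical observation), and your write-up supplies precisely the intended reasoning, including the correct identification of monicness of $h$ as the essential hypothesis.
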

Second we define the \emph{push-out set} of three objects.
\begin{definition}[\textbf{Push-out set}]
Let $c_1$ and $c_2$ be two objects of $\CCat$. For any object $d$ we define the \emph{push-out set} $\PO{d}{c_1,c_2}$ to be the set of pairs of morphisms $(c_i\mor{g_i}d\mid i=1,2)$ that present $d$ as a weak push-out of $c_1$ and $c_2$. That is to say that the pullback diagram
\squareDiagram{c_1\times_d c_2}{c_1}{c_2}{d}{}{}{g_1}{g_2}
is a weak push-out diagram.
\end{definition}
\begin{remark}
It is straightforward to verify that the procedure of replacing $d$ by an isomorphic object $d'$ and mapping $c_1$ and $c_2$ into $d'$ through any isomorphism ${d\mor{\sim}d'}$ preserves weak push-out diagrams. Therefore any such isomorphism induces a natural bijection of sets
$$
\PO{d}{c_1,c_2} \mor{\sim} \PO{d'}{c_1,c_2} 
$$
by left-composition. In particular, the group of automorphisms $G_d$ acts on $\PO{d}{c_1,c_2}$ on the left. Similarly, the group $G_{c_1}\times G_{c_2}$ acts naturally on the right by precomposition.
\end{remark}

The general philosophy of this work is the following: statements about representation stability (of which Theorem \ref{thm:intro-free-modules}(1) is one) are reflected by statement about $\CCat$-sets. Therefore closure under tensor products should be a consequence of a set-theoretic observation. This is the content of the next lemma.
\begin{lem}[\textbf{Tensor products: set version}] \label{lem:products_set_version}
Let $\CCat$ be a category of $\FI$ type. There is a natural isomorphism between the product functor
$$
\Hom(c_1,\bullet)\times \Hom(c_2,\bullet)
$$
and the disjoint union functor
$$
\coprod_{[d]} \Hom\left( d, \bullet \right)\times_{G_d} \PO{d}{c_1,c_2}
$$
where $[d]$ ranges over the isomorphism classes of $\CCat$ and $d$ is some representative of $[d]$. Furthermore this natural isomorphism respects the right $(G_{c_1}\times G_{c_2})$-action on the two functors.
\end{lem}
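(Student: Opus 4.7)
The plan is to build mutually inverse natural transformations between the two functors and then to verify the claimed equivariance. Fix an object $x$.

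In one direction, given a pair $(g_1, g_2) \in \Hom(c_1,x)\times\Hom(c_2,x)$, form the pullback $p := c_1 \times_x c_2$ together with its projections $p \xrightarrow{\tilde f_i} c_i$ (which exist by axiom (5) of $\FI$ type). Next, form the weak push-out $d := c_1 \coprod_p c_2$ along the $\tilde f_i$'s, with structure maps $c_i \xrightarrow{\iota_i} d$. Since the original square with corner $x$ is a pullback diagram over $p$, the universal property of the weak push-out produces a unique morphism $h : d \to x$ with $h \circ \iota_i = g_i$. This assigns to $(g_1,g_2)$ the triple $(d, h, (\iota_1,\iota_2))$. Different choices of the weak push-out object $d$ differ by an automorphism of $d$, and this ambiguity is precisely killed by the $G_d$-quotient in $\Hom(d,x) \times_{G_d} \PO{d}{c_1,c_2}$; so the assignment $\Phi$ is well defined.

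In the opposite direction, a triple $(d, h, (\iota_1,\iota_2))$ with $(\iota_1,\iota_2) \in \PO{d}{c_1,c_2}$ is sent by $\Psi$ to $(h \circ \iota_1, h \circ \iota_2)$; this clearly descends through the $G_d$-quotient, since pre-composing $h$ and $(\iota_1,\iota_2)$ by the same $\sigma \in G_d$ gives the same pair. That $\Psi \circ \Phi = \Id$ is then an immediate consequence of the defining equation $h \circ \iota_i = g_i$. For $\Phi \circ \Psi = \Id$, one starts with $(d, h, (\iota_1, \iota_2))$ and must check two things: that the pullback of $(h\circ\iota_1, h\circ\iota_2)$ along $x$ is (canonically isomorphic to) the object $p$ already in the background of $(\iota_1,\iota_2) \in \PO{d}{c_1,c_2}$, and that the weak push-out built from this pullback recovers $(d, h, (\iota_1,\iota_2))$ up to a unique automorphism of $d$. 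The first point is exactly where Lemma \ref{lem:pullback-invariance} is used: since $h$ is a monomorphism, the pullback of $\iota_1, \iota_2$ along $d$ agrees with the pullback of $h\circ\iota_1, h\circ\iota_2$ along $x$. The second point follows from the fact that $(\iota_1,\iota_2)$ is by assumption a valid weak push-out presentation of $d$, combined with the uniqueness clause in the universal property.

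Finally, naturality in $x$ is automatic: a morphism $x \to y$ composes on the left with $g_1, g_2$, or equivalently with $h$, and the weak push-out data $(d,(\iota_1,\iota_2))$ is unchanged. The $(G_{c_1}\times G_{c_2})$-equivariance holds because precomposing $g_i$ by $\psi_i \in G_{c_i}$ changes the pullback $p$ only by an isomorphism on the $c_i$-side, which propagates to an isomorphic weak push-out, leaving the $G_d$-class of $(h,(\iota_1,\iota_2))$ twisted precisely by the right $(G_{c_1}\times G_{c_2})$-action on $\PO{d}{c_1,c_2}$.

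The main obstacle I anticipate is the bookkeeping in the step $\Phi\circ\Psi=\Id$: one must be careful to identify the canonical isomorphism between the newly constructed weak push-out object and the original $d$, and show that transporting $h$ and $(\iota_1,\iota_2)$ along it lands in the same $G_d$-orbit. Lemma \ref{lem:pullback-invariance} reduces this to the uniqueness part of the universal property of weak push-outs, which is what makes the argument go through.
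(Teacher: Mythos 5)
Your proposal is correct and follows essentially the same route as the paper: the map $\Psi$ by composition, the map $\Phi$ by pullback-then-weak-push-out with the induced map from the universal property, well-definedness up to the $G_d$-quotient, and equivariance under $G_{c_1}\times G_{c_2}$. The only difference is one of emphasis: you spell out the step $\Phi\circ\Psi=\Id$ via Lemma \ref{lem:pullback-invariance} (which the paper leaves implicit in ``clearly inverse''), while the paper instead devotes its detail to the well-definedness of $\Phi$, which you state more briefly but correctly.
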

\begin{proof}
For any object $x$ and a representative $d$ of the isomorphism class $[d]$ we define a function
\begin{equation}
\Hom\left( d, x \right)\times_{G_d} \PO{d}{c_1,c_2} \mor{\Psi_x^d}  \Hom(c_1,x)\times \Hom(c_2,x)
\end{equation}
by composition, i.e.
\begin{equation}
\left[d\mor{f}x , (c_i\mor{r_i}d) \right] \mapsto  \left( c_i \mor{f\circ r_i} x \right)
\end{equation}
By the associativity of composition, this is well-defined on the product over $G_d$. Moreover, $\Psi^d_\bullet$ is clearly natural in $x$ and respects the right action of $G_{c_1}\times G_{c_2}$ given by precomposition.

Letting $d$ range over all isomorphism classes  we get a natural transformation from the union
\begin{equation}
\coprod_{[d]}\Hom\left( d, \bullet \right)\times_{G_d} \PO{d}{c_1,c_2} \mor{\Psi_\bullet}  \Hom(c_1,\bullet)\times \Hom(c_2,\bullet)
\end{equation}
which respects the right $G_{c_1}\times G_{c_2}$-action.

In the other direction, let $x$ again be any object. We define a function
\begin{equation}
\Hom(c_1,x)\times \Hom(c_2,x) \mor{\Phi^d_x} \Hom\left( d, x \right)\times_{G_d} \PO{d}{c_1,c_2} 
\end{equation}
as follows. Let $(c_i\mor{f_i}x\mid i=1,2)$ be a pair of morphisms. Construct their pull-back
\squareDiagram{p}{c_1}{c_2}{x}{\alpha_1}{\alpha_2}{f_1}{f_2}
and form the weak push-out for $p\mor{\alpha_i}c_i$
\squareDiagram{p}{c_1}{c_2}{c_1\coprod_p c_2=:d}{\alpha_1}{\alpha_2}{r_1}{r_2}
The universal property of the weak push-out then implies that there exists a unique morphism $d\mor{f}x$ such that $f\circ r_i = f_i$. We define $\Phi^d_x$ by
\begin{equation}
(c_i \mor{f_i}x )\mapsto \left[(d\mor{f}x), c_i\mor{r_i}d \right].
\end{equation}

To see that $\Phi^d_x$ is well-defined, suppose
\squareDiagram{p'}{c_1}{c_2}{d'}{\alpha_1'}{\alpha_2'}{r_1'}{r_2'}
is another weak push-out diagram produced by the same procedure and $d'\mor{f'}x$ is the corresponding induced map. First we observe that since $p$ and $p'$ are both pull-backs of the pair $(f_1,f_2)$ there exists an isomorphism $p\mor{\tau}p'$ for which $\alpha_i'\circ \tau =  \alpha_i$ for $i=1,2$. Second, we replace $p'$ by $p$ in the weak push-out diagram, mapping it though $\tau$, i.e.
\squareDiagram{p}{c_1}{c_2}{d'}{\alpha_1'\circ \tau}{\alpha_2'\circ \tau}{r_1'}{r_2'}
and this is again a weak push-out diagram. Therefore, by the universal property of the weak push-out, there exists a unique morphism $d\mor{\psi}d'$ for which $\psi\circ r_i = r_i'$. The same reasoning applied in reverse shows that $\psi$ admits a unique inverse, and therefore $d\cong d'$. Since we picked $d$ to be the representative for the isomorphism class $[d]$, it follows that $d=d'$ and that $\psi\in G_d$. The induced map $f$ is characterized by the property that $f\circ r_i = f_i$, and similarly for $f'$ and $r_i'$. Therefore we find that
$$
f_i = f'\circ r_i' = f'\circ \psi \circ r_i
$$
which by the universal property of $d$ shows that in fact $f = f'\circ \psi$. Our function $\Phi^d_x$ is defined as to send the pair $(f_1,f_2)$ to
$$
\left[ f, (c_i\mor{g_i}d) \right] = \left[ f'\circ \psi, (c_i\mor{g_i}d) \right] = \left[ f', (c_i\mor{\psi\circ g_i}d) \right] = \left[ f', (c_i\mor{g_i'}d) \right]
$$
which we now see that is uniquely defined.

The two functions $\Psi^d_x$ and $\Phi^d_x$ are clearly inverse, and therefor they together form a natural isomorphism between the two functors. As stated above, this isomorphism respects the right $G_{c_1}\times G_{c_2}$-action.
\end{proof}

Now we can prove that free $\CCat$-modules are indeed closed under tensor products.
\begin{proof} [Proof of Theorem \ref{thm:intro-free-modules}(1)]
By the distributivity of tensor products, it is enough to verify the claim for induction modules of respective degrees $\leq c_1$ and $c_2$ respectively. Moreover, by the transitivity of the order relation between objects, it will suffice if we assume that the degrees are precisely $c_1$ and $c_2$ respectively. Let $\Ind_{c_1}(V)$ and $\Ind_{c_2}(W)$ be two such $\CCat$-modules.

We apply an easy-to-verify equality of tensor products,
\begin{eqnarray*}
\Ind_{c_1}(V)_\bullet \otimes_k \Ind_{c_2}(W)_\bullet &=& \left(\C[\Hom{c_1,\bullet}]\otimes_{G_{c_1}} V\right) \otimes_k \left(\C[\Hom{c_2,\bullet}]\otimes_{G_{c_2}} W\right) \\
 &\cong& \left(\C[\Hom(c_1,\bullet)]\otimes_k \C[\Hom(c_2,\bullet)]\right) \bigotimes_{G_{c_1}\times G_{c_2}} \left( V \boxtimes W \right)
\end{eqnarray*}
and to this we can apply the natural isomorphism
\begin{equation}
\C[\Hom(c_1,\bullet)]\otimes \C[\Hom(c_2,\bullet)] \cong \C\left[ \Hom(c_1,\bullet)\times \Hom(c_2,\bullet) \right].
\end{equation}

In Lemma \ref{lem:products_set_version} we found a natural isomorphism between the product
$$
\Hom(c_1,\bullet)\times \Hom(c_2,\bullet)
$$
and the union
$$
\coprod_{[d]} \Hom\left( d, \bullet \right)\times_{G_d} \PO{d}{c_1,c_2}
$$
which when composed with the permutation representation functor $X \mapsto \C[X]$ yields a natural isomorphism
\begin{equation}
\C\left[ \Hom(c_1,\bullet)\times \Hom(c_2,\bullet) \right] \cong \bigoplus_{[d]} \C[\Hom(d,\bullet)]\otimes_{G_d} \C[\PO{d}{c_1,c_2}]
\end{equation}

By the associativity of the tensor product, we get a natural isomorphism
\begin{eqnarray*} \label{eq:tensor_of_free}
\Ind_{c_1}(V)_\bullet \otimes_k \Ind_{c_2}(W)_\bullet &\cong& \bigoplus_{[d]} \C[\Hom(d,\bullet)] \topotimes[G_d] \C[\PO{d}{c_1,c_2}] \bigotimes_{G_{c_1}\times G_{c_2}} (V\boxtimes W) \\
 &=& \bigoplus_{[d]} \Ind_d\left( \C[\PO{d}{c_1,c_2}] \bigotimes_{G_{c_1}\times G_{c_2}} (V\boxtimes W) \right) {}_\bullet
\end{eqnarray*}
as claimed.

Note that for $d$ to have a non-zero contribution to this direct sum, the set $\PO{d}{c_1,c_2}$ must be non-empty. In particular, there exists a decomposition $\displaystyle{d=c_1\coprod_p c_2}$. This proves the claim regarding the degree of terms in the sum. Since there are only finitely many isomorphism classes of objects with such a presentation, the above direct sum decomposition is finite.
\end{proof}

\subsection{Dualization}
One would like to define the dual of a $\CCat$-module $M_\bullet$ by $(M^*)_c = (M_c)^*$. Unfortunately, this will not be a $\CCat$-module in general (it will be a $\CCat^{\op}$-module). In this subsection we show that when dealing with free $\CCat$-modules there is a good notion of dualization.

\begin{definition}[\textbf{Dual $\CCat$-module}]
For an induction module $\Ind_c(V)$ we define its dual $\CCat$-modules by
\begin{equation}
\Ind_c(V)^* = \Ind_c(V^*)
\end{equation}
where $V^*$ is the $G_c$-representation dual to $V$. Extend this definition linearly to all (virtual) free $\CCat$-modules. 
\end{definition}

We claim that this indeed gives a good notion of duals.
\begin{thm}
If $M_\bullet$ is a free $\CCat$-module then there is a homomorphism of $\CCat$-modules
\begin{equation}
M_\bullet^* \otimes M_\bullet \mor{\ev} \C_\bullet
\end{equation}
where $\C_\bullet$ is the trivial $\CCat$-module with $\C_d = \C$ for every object. This pairing is non-degenerate and thus defines an isomorphism of $G_d$-representations $(M^*)_d\cong (M_d)^*$ for every object $d$.

We conclude that the dual of a free $\CCat$-module of degree $\leq c$ is again a $\CCat$-module of degree $\leq c$.
\end{thm}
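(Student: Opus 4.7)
The plan is to construct an evaluation pairing on a single induction module, verify the required properties levelwise, and assemble the general case by direct sum. The key structural fact to exploit is that, since every morphism of $\CCat$ is a monomorphism, the right action of $G_c$ on $\Hom_{\CCat}(c,d)$ is \emph{free}: from $fg=f$ one deduces $g=1$. Consequently, for any two $f,f'\in\Hom_{\CCat}(c,d)$ lying in the same orbit $[f]=[f']\in\binom{d}{c}$ there is a \emph{unique} $g\in G_c$ with $f'=fg$.

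At each object $d$, I would define on pure tensors
\[
E_d\bigl((f\otimes\phi)\otimes(f'\otimes v)\bigr)=
\begin{cases}
\phi(g\cdot v) & \text{if } [f]=[f'] \text{ and } f'=fg,\\
0 & \text{otherwise.}
\end{cases}
\]
First I would check that $E_d$ descends to a well-defined map on $\Ind_c(V^*)_d\otimes_\C\Ind_c(V)_d$, i.e.\ respects the balancing relations $fh\otimes\phi=f\otimes h\phi$ and $f'h\otimes v=f'\otimes hv$ for $h\in G_c$; both reduce to one-line computations using $G_c$-equivariance of the canonical pairing $V^*\otimes V\to\C$. Next, because any $\alpha\colon d\to d'$ is a monomorphism, $\alpha f'=(\alpha f)g$ is equivalent to $f'=fg$, so $E_{d'}\circ(\alpha_*\otimes\alpha_*)=E_d$; this exhibits $E$ as a morphism of $\CCat$-modules into $\C_\bullet$, and specializing to $\alpha=\sigma\in G_d$ shows that $E_d$ is $G_d$-invariant.

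For non-degeneracy I would pick orbit representatives $\{f_i\}_i\subset\Hom_{\CCat}(c,d)$ for the free $G_c$-action, obtaining vector-space decompositions $\Ind_c(V)_d\cong\bigoplus_i V$ and $\Ind_c(V^*)_d\cong\bigoplus_i V^*$, with $i$-th summands spanned by $f_i\otimes V$ and $f_i\otimes V^*$. Under these identifications $E_d$ becomes the orthogonal direct sum of the standard non-degenerate pairings $V^*\otimes V\to\C$, hence $E_d$ itself is non-degenerate and yields the desired $G_d$-equivariant isomorphism $\Ind_c(V^*)_d\cong(\Ind_c(V)_d)^*$. For a general free module $M_\bullet=\bigoplus_j\Ind_{c_j}(V_j)$ with dual $M^*_\bullet=\bigoplus_j\Ind_{c_j}(V_j^*)$, I would assemble the pairing as the direct sum of the $E$'s on diagonal summands, extended by zero on cross-terms; non-degeneracy at each $d$ follows summand by summand. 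The degree claim is then immediate from the definition $\Ind_c(V)^*:=\Ind_c(V^*)$, which has the same degree $c$, so $\deg(M^*)=\deg(M)\leq c$.

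I expect the main obstacle to be the well-definedness check for $E_d$, where the right $G_c$-action on $\Hom_{\CCat}(c,d)$ must be reconciled with the left $G_c$-actions on $V$ and $V^*$; this is precisely the place where freeness of the $G_c$-action — equivalently, the monomorphism axiom on $\CCat$ — enters essentially. Everything else (functoriality, non-degeneracy, preservation of degree) follows routinely once orbit representatives are fixed.
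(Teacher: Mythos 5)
Your proposal is correct and follows essentially the same route as the paper: both define the evaluation pairing on $\Ind_c(V^*)\otimes\Ind_c(V)$ by contracting $V^*\otimes V$ exactly when the two morphisms lie in the same $G_c$-orbit (your explicit formula $\phi(g\cdot v)$ for $f'=fg$ is the paper's $\langle v^*,\psi(v)\rangle$ for $f\circ\psi=g$, the freeness of the $G_c$-action being the paper's observation that each class $[f]$ is a $G_c$-torsor), then extend by zero on cross-terms of the direct sum and check naturality and non-degeneracy levelwise. No gaps; the well-definedness and functoriality checks you flag are exactly the ``straightforward to check'' steps in the paper.
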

\begin{proof}
Suppose $M = \oplus_i \Ind_{c_i}(V_i)$. Then there is a decomposition of $\CCat$-modules
$$
M^*\otimes M = \topoplus[i,j] \Ind_{c_i}(V_i^*)\otimes \Ind_{c_j}(V_j).
$$
We define the pairing to be $0$ for all $i\neq j$. For $i=j$ consider a single induction module $\Ind_c(V)$ and decompose it using Equation \ref{eq:ind_decomposition}
$$
\Ind_c(V)_d = \oplus_{[f]\in\binom{d}{c}} V \implies \left(\Ind_c(V^*)\otimes \Ind_c(V^*)\right)_d = \oplus_{[f],[g]\in\binom{d}{c}} V^*\otimes V.
$$
Set the pairing to be $0$ on all $[f]\neq [g]$, and for $[f]=[g]$ use the natural contraction on $V^*\otimes V$. This produces a map
$$
\oplus_{[f],[g]\in\binom{d}{c}} V^*\otimes V \mor{} \oplus_{[f]\in \binom{d}{c}} \C \mor{+} \C
$$
which is the pairing we sought.

Explicitly, the pairing on $\Ind_c(V^*)\otimes \Ind_c(V)$ is given by
\begin{equation}\label{eq:contraction}
\langle f\otimes v^*, g\otimes v \rangle = \sum_{\substack{\psi\in G_c \\ f\circ \psi = g}} \langle v^*, \psi(v)\rangle = \begin{cases}
\langle v^*, \psi(v)\rangle & f\circ \psi = g \\
0 & [f]\neq [g]
\end{cases}
\end{equation}
It is straightforward to check that the above pairing is invariant under the action of morphisms in $\CCat$. It is thus a morphism of $\CCat$-modules, as claimed. One can also check that the pairing is non-degenerate, and thus defines the claimed $G_d$-equivariant isomorphism
$$
(M^*)_d \mor{\sim} (M_d)^*.
$$
\end{proof}
\begin{cor}[\textbf{The $\Hom$ $\CCat$-module}] \label{cor:hom_module}
If $M_\bullet$ is a free $\CCat$-modules and $N_\bullet$ is any $\CCat$-module, then there exists a $\CCat$-module $\Hom(M,N)_\bullet$ whose value at $d$ is the $G_d$-representation $\Hom_\C(M_d,N_d)$.

A morphism $d\mor{f}e$ induces a function $\Hom_\C(M_d,N_d)\mor{f_*}\Hom_\C(M_e,N_e)$ satisfying the following naturality property: if $M_d\mor{T}N_d$ is any linear function, then there is a commutative diagram
\squareDiagram{M_d}{N_d}{M_e}{N_e}{T}{M(f)}{N(f)}{f_*(T)}

Furthermore, if $N_\bullet$ is itself free, and the degrees of $M_\bullet$ and $N_\bullet$
are $\leq c_1$ and $\leq c_2$ respectively, then $\Hom(M,N)_\bullet$ is also free and has degree $\leq c_1+c_2$.
\end{cor}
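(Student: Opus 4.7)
The natural plan is to set $\Hom(M,N)_\bullet := M^*_\bullet \otimes N_\bullet$, where $M^*_\bullet$ is the dual $\CCat$-module constructed in the preceding theorem. This is a tensor product of $\CCat$-modules (with $N_\bullet$ arbitrary), so it is automatically a $\CCat$-module, and since $M^*_\bullet$ is free of degree $\leq c_1$, the ``furthermore'' clause will follow immediately from the closure of free $\CCat$-modules under tensor products (Theorem \ref{thm:intro-free-modules}(1)) as soon as $N_\bullet$ is also assumed free of degree $\leq c_2$, giving the desired bound $\leq c_1+c_2$.

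To identify the value at each object $d$ with $\Hom_\C(M_d,N_d)$, I would combine two canonical $G_d$-equivariant isomorphisms: the pairing-induced isomorphism $(M^*)_d \cong (M_d)^*$ from the preceding theorem, and the standard isomorphism $(M_d)^*\otimes_\C N_d \cong \Hom_\C(M_d,N_d)$. The second is valid because $M_d$ is a finite-dimensional $\C$-vector space, which in turn follows from $M_\bullet$ being a finite direct sum of induction modules $\Ind_{c_i}(V_i)$ together with the local finiteness of $\CCat$ (so each $\Hom_\CCat(c_i,d)$ is a finite set, and the decomposition \eqref{eq:ind_decomposition} gives a finite direct sum).

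The remaining step is to verify the naturality square. A morphism $f:d\to e$ acts on $M^*_\bullet\otimes N_\bullet$ by $(M^*)(f)\otimes N(f)$, and I would check that under the above identification this becomes precisely the map $f_*(T)$ uniquely characterized by $N(f)\circ T = f_*(T)\circ M(f)$. Because every morphism in $\CCat$ is a monomorphism, $M(f)$ is injective, so the square determines $f_*(T)$ on the image of $M(f)$; the freeness of $M_\bullet$ together with the splitting $\Ind_c(V)_e = \bigoplus_{[g]\in\binom{e}{c}} V$ from \eqref{eq:ind_decomposition} picks out a canonical complement on which $f_*(T)$ must vanish. Unwinding the explicit contraction formula \eqref{eq:contraction} confirms that $(M^*)(f)\otimes N(f)$ realizes exactly this assignment, one summand at a time.

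The main obstacle I anticipate is not any single calculation but bookkeeping: the statement mingles three separate canonical identifications (the dual pairing on $M^*_\bullet$, the tensor-hom isomorphism over $\C$, and the $\CCat$-module action on $M^*_\bullet\otimes N_\bullet$), and one must show they assemble compatibly. The essential input that makes this work is the monomorphism assumption (axiom (2) of Definition \ref{def:FI-type}), which is what guarantees both that the pairing in the preceding theorem is non-degenerate and that the splitting of induction modules giving the complement is canonical, forcing the value of $f_*(T)$ to be uniquely determined.
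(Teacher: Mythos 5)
Your proposal is correct and follows essentially the same route as the paper, whose entire proof is to set $\Hom(M,N)_\bullet := M^*_\bullet \otimes N_\bullet$ and cite the dualization theorem together with Theorem \ref{thm:intro-free-modules}(1) for the freeness and degree bound. Your extra work --- checking that $(M^*)_d\otimes N_d \cong \Hom_\C(M_d,N_d)$ via finite-dimensionality of $M_d$, and unwinding the contraction formula \eqref{eq:contraction} to verify the naturality square for $f_*(T)$ --- simply makes explicit details the paper leaves implicit, and is accurate.
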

\begin{proof}
The desired $\CCat$-module is the tensor product $M^*\otimes N$. All other claims follow for the properties of tensor products and duals proved above.
\end{proof}

\section{The Coinvariant quotient and Stabilization}\label{sec:coinvariants}
When $G$ is a finite group, the coinvariants of a $G$-representation are the categorified analog of averaging over class function: if $\chi$ is the character of a $G$-representation $V$, then
\begin{equation}\label{eq:coinvariants_average}
\dim V_G = \frac{1}{|G|}\sum_{g\in G}\chi(g).
\end{equation}
Such averages appear in $G$-inner products, which we want to relate for the various automorphism groups $G_c$ of our category $\CCat$. This section will therefore analyze the behavior of free $\CCat$-modules under taking their coinvariants. Recall that the coinvariant quotient of a $G$-representation $V$ is its maximal invariant quotient, namely
$$
V_G = V/\langle v-gv \mid v\in V, g\in G \rangle.
$$
We will also denote this quotient by $V/{G}$.

In the context of a $\CCat$-module $M_\bullet$ we can form the $G_c$-coinvariant quotient of $M_c$ for every object $c$. If $c\mor{f}d$ is any morphism and $M_c\mor{M(f)}M_d$ the induced map, then it descends to a well-defined map on the coinvariants. Indeed, this follows from the assumptions that $G_d$ acts transitively on $\Hom_\CCat(c,d)$: if $g\in G_c$ is any automorphism, then $f$ and $f\circ g$ are two morphisms from $c$ to $d$ and thus there exists some $\tilde{g}\in G_d$ for which $\tilde{g}\circ f = f\circ g$. This shows that for every $v\in M_c$
$$
v-g(v) \overset{M(f)}{\longmapsto} f(v) - f\circ g(v) = f(v) - \tilde{g}\left(f(v)\right)
$$
and indeed $v-gv$ gets mapped to zero in the coinvariant quotient of $M_d$.
\begin{definition}[\textbf{The coinvariant quotient}]
We call the resulting $\CCat$-module of coinvariant quotients \emph{the coinvariant $\CCat$-module} of $M_\bullet$ and denote it by $(M/G)_\bullet$.
\end{definition}
\begin{note}
Every two morphisms $c\mor{f}d$ and $c\mor{f'}d$ give rise to the same map between coinvariants. This is because there exists some $\tilde{g}\in G_d$ for which $\tilde{g}\circ f = f'$ and this $\tilde{g}$ acts trivially on $(M/G)_d$. Thus for every pair $c\leq d$ there is a well-defined map between the coinvariants $(M/G)_c \mor{} (M/G)_d$.
\end{note}

The coinvariant quotient forms an endofunctor on $\CCat$-modules. In this subsection we study the action of this functor on free $\CCat$-modules and demonstrate that they exhibit stability under its operation.
\begin{lem} \label{lem:coinvariants_of_inductions}
Let $V$ be any $G_c$-representation and $\Ind_c(V)$ the corresponding induction module. The $G_d$-coinvariants of $\Ind_c(V)_d$ are given by
\begin{equation}
(\Ind_c(V)/G)_d \cong \begin{cases}
V/{G_c} &  \text{if } c\leq d \\
0 &  \text{otherwise}
\end{cases}
\end{equation}
with all morphisms $c\leq d\mor{f}d'$ inducing the identity map.
\end{lem}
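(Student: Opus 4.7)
The plan is to directly manipulate the decomposition $\Ind_c(V)_d \cong \bigoplus_{[f] \in \binom{d}{c}} V$ from Equation \ref{eq:ind_decomposition} and then apply an orbit-stabilizer argument. First, if $c \not\leq d$ then $\Hom_\CCat(c,d) = \emptyset$, so $\Ind_c(V)_d = 0$ and both the coinvariants and the claimed value vanish. In the remaining case $c \leq d$, the group $G_d$ permutes the summands of the above decomposition through its natural action on $\binom{d}{c}$. Axiom 3 of $\FI$ type ensures $G_d$ acts transitively on $\Hom_\CCat(c,d)$, hence on the quotient $\binom{d}{c}$; therefore the orbit-stabilizer principle reduces the coinvariants to those of a single summand:
$$(\Ind_c(V)/G)_d \cong V / \Stab_{G_d}([f_0])$$
for any fixed representative $f_0 \in \Hom_\CCat(c,d)$.

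Next I would identify the action of $\Stab_{G_d}([f_0])$ on the summand $V$. An element $g \in G_d$ fixes $[f_0]$ precisely when $g \circ f_0 = f_0 \circ \psi$ for some $\psi \in G_c$, and since $f_0$ is a monomorphism (axiom 2) the element $\psi = \psi(g)$ is uniquely determined. Under the identification of the $[f_0]$-summand with $V$, the automorphism $g$ acts as $\psi(g)$, yielding a homomorphism $\Stab_{G_d}([f_0]) \to G_c$. The key observation, again invoking transitivity from axiom 3, is that this homomorphism is surjective: for any $\psi \in G_c$, the morphism $f_0 \circ \psi \in \Hom_\CCat(c,d)$ must equal $g \circ f_0$ for some $g \in G_d$. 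Thus coinvariants under $\Stab_{G_d}([f_0])$ agree with coinvariants under all of $G_c$, giving the desired isomorphism $(\Ind_c(V)/G)_d \cong V/G_c$.

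Finally, for naturality, let $d \mor{f} d'$ be any morphism with $c \leq d$, and choose representatives $f_0 \in \Hom_\CCat(c,d)$ and $f'_0 \in \Hom_\CCat(c,d')$ to realize the isomorphisms above at $d$ and $d'$. The induced map sends $f_0 \otimes v$ to $(f \circ f_0) \otimes v$; by transitivity of $G_{d'}$ on $\binom{d'}{c}$, there exist $h \in G_{d'}$ and $\psi \in G_c$ with $h \circ f \circ f_0 = f'_0 \circ \psi$. Then in coinvariants
$$[(f \circ f_0) \otimes v] = [(h \circ f \circ f_0) \otimes v] = [(f'_0 \circ \psi) \otimes v] = [f'_0 \otimes (\psi \cdot v)],$$
and passing to $V/G_c$ the ambiguity by $\psi$ disappears, so the induced map is the identity. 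The main technical point, and likely the subtlest step to verify carefully, is the surjectivity of $\Stab_{G_d}([f_0]) \to G_c$, which is precisely what forces the coinvariant formula to be $V/G_c$ rather than a nontrivial quotient; this is where axiom 3 is essential and distinguishes categories of $\FI$ type from general $\catname{EI}$ categories.
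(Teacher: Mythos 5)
Your proof is correct, but it follows a genuinely different route from the paper's. The paper computes the coinvariants in one stroke via the presentation $W_G \cong \C\otimes_G W$ and associativity of tensor products,
$$
(\Ind_c(V)/G)_d \;\cong\; \C\otimes_{G_d}\C[\Hom_\CCat(c,d)]\otimes_{G_c}V \;\cong\; \C[G_d\backslash\Hom_\CCat(c,d)]\otimes_{G_c}V,
$$
so that transitivity (axiom 3) collapses $G_d\backslash\Hom_\CCat(c,d)$ to a point (or to $\emptyset$), giving $V/G_c$ and the identity induced maps simultaneously, with no mention of stabilizers. You instead start from the summand decomposition $\Ind_c(V)_d\cong\bigoplus_{[f]\in\binom{d}{c}}V$, reduce to one summand by the standard fact that coinvariants of a module induced along a transitive action equal the $\Stab_{G_d}([f_0])$-coinvariants of the fiber, and then show the homomorphism $\Stab_{G_d}([f_0])\to G_c$ (well defined since $f_0$ is a monomorphism, by axiom 2) is surjective, again by axiom 3 applied to the pair $f_0,\,f_0\circ\psi$. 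Both arguments hinge on exactly the same transitivity axiom; yours buys an explicit structural statement (the stabilizer surjects onto $G_c$, which is what prevents a smaller quotient than $V/G_c$), at the cost of invoking the Shapiro-type reduction to the stabilizer without proof and of a separate, slightly more delicate verification of naturality, which in the paper's formulation is immediate because a morphism $d\to d'$ visibly carries the single $G_d$-orbit to the single $G_{d'}$-orbit. Your naturality check is fine as written (transitivity even lets you take $\psi=\mathrm{id}$ there), and the case $c\not\leq d$ is handled correctly in both treatments.
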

\begin{remark}
This again reflects a statement about $\CCat$-sets. Namely, that the set of orbits $G_d\backslash\Hom_\CCat(c,d)$ is either a singleton if $c\leq d$ or empty otherwise.
\end{remark}
\begin{proof}[Proof of Lemma \ref{lem:coinvariants_of_inductions}]
Recall that the coinvariant quotient of a $G$-representation $W$ can be defined as the tensor product
\begin{equation}
(W)_G \cong \C\otimes_{G} W
\end{equation}
where $\C$ denotes the trivial $G$-representation.

Using the associativity of tensor products, and the presentation of $\Ind_c(V)$ as one, we get
\begin{equation}
\left(\Ind_c(V)/G\right)_d \cong \C \otimes_{G_d} \C[\Hom_\CCat(c,d)] \otimes_{G_c} V \cong \C[G_d\backslash\Hom_\CCat(c,d)] \otimes_{G_c} V
\end{equation}

By hypothesis the $G_d$ action on $\Hom_\CCat(c,d)$ is transitive. Therefore if $c\leq d$ then $\Hom(c,d)\neq \emptyset$ and this set forms a single orbit. Furthermore, a morphism $d\mor{f}d'$ carries this single orbit corresponding to $d$ to the one corresponding to $d'$. In the case where there are no morphisms $c\mor{}d$ we have the empty set. In other words we have
\begin{equation}
\C[G_d\backslash\Hom_\CCat(c,d)] \cong \begin{cases}
\C & \text{if } c\leq d \\
0 & \text{otherwise}
\end{cases}
\end{equation}
and a morphism $c\leq d\mor{f}d'$ induces the identity map on $\C$. Tensoring with $V$ over $G_c$ we get $V/{G_c}$ when $c\leq d$, zero otherwise, and morphisms as stated.
\end{proof}

Applying this result to direct sums of induction $\CCat$-modules, we can formulate what happens to free $\CCat$-modules when we take their coinvariants.
\begin{thm}[\textbf{Coinvariant stabilization}] \label{thm:coinvariant_stabilization}
When the coinvariants functor is applied to any free module of degree $\leq c$, all maps induced by $\CCat$-morphisms are injections, and all maps induced by morphisms between objects $\geq c$ are isomorphisms. 

Explicitly, the stable isomorphism type of the coinvariant quotient of a free $\CCat$-module $\oplus_{i} \Ind_{c_i}(V_i)$ is given by
\begin{equation}\label{eq:coinvariants_of_free}
\lim_{\bullet\rightarrow \infty}\left(\oplus_i \Ind_{c_i}(V_i)/G\right)_\bullet = \oplus_{i} V_i/G_{c_i}.
\end{equation}
\end{thm}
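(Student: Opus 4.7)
The proof is a direct unpacking of Lemma \ref{lem:coinvariants_of_inductions}; the plan is to reduce to the case of a single induction module via additivity and then read off both assertions summand by summand.

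First I would observe that the coinvariant construction $W \mapsto \C \otimes_{G} W$ commutes with direct sums, hence so does the endofunctor $(-)/G$ on $\Mod{\CCat}$. Writing the free module as $M_\bullet = \oplus_i \Ind_{c_i}(V_i)$ with each $c_i \leq c$, we therefore obtain a natural decomposition of $\CCat$-modules
\begin{equation*}
(M/G)_\bullet \;\cong\; \bigoplus_i \bigl(\Ind_{c_i}(V_i)/G\bigr)_\bullet.
\end{equation*}

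Next I would apply Lemma \ref{lem:coinvariants_of_inductions} to each summand. The lemma computes $(\Ind_{c_i}(V_i)/G)_d$ as $V_i/G_{c_i}$ when $c_i \leq d$ and as $0$ otherwise, and further asserts that every $\CCat$-morphism $d \mor{f} d'$ with $c_i \leq d$ (and hence $c_i \leq d'$) induces the identity map between these copies of $V_i/G_{c_i}$. Both of the theorem's claims now fall out by inspection of this behavior summand by summand. For the stable value: once $d \geq c$ we have $c_i \leq c \leq d$ for every $i$, so each summand contributes $V_i/G_{c_i}$, giving the identification of Equation \ref{eq:coinvariants_of_free}; and any $\CCat$-morphism between two such objects acts as the identity on every summand, hence as an isomorphism on the direct sum.

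For the injectivity assertion at an arbitrary morphism $d \mor{f} d'$, partition the index set according to whether $c_i \leq d$. For indices with $c_i \not\leq d$ the $d$-summand is zero, so injectivity on that piece is automatic. For indices with $c_i \leq d$ we also have $c_i \leq d'$ by transitivity of the preorder, and Lemma \ref{lem:coinvariants_of_inductions} gives that the induced map is the identity on $V_i/G_{c_i}$. Direct sums of injections are injections, so $(f_*): (M/G)_d \to (M/G)_{d'}$ is injective, completing the proof.

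Since all the real content is packaged into Lemma \ref{lem:coinvariants_of_inductions} (which in turn rests on the transitivity axiom (3) of Definition \ref{def:FI-type}), there is no genuine obstacle here — the theorem is essentially a bookkeeping corollary, and I would expect the write-up to be only a few lines long.
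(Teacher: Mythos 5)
Your proposal is correct and follows exactly the route the paper intends: the paper states Theorem \ref{thm:coinvariant_stabilization} as an immediate consequence of Lemma \ref{lem:coinvariants_of_inductions} applied summand by summand (it gives no separate proof), and your write-up simply makes that bookkeeping explicit, including the correct handling of the summands with $c_i \not\leq d$ via the zero map.
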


This translates to the following result regarding character polynomials.
\begin{cor}[\textbf{Stabilization of Expectation}]\label{cor:expectation_stabilization}
If $P$ is a character polynomial of degree $\leq c$, then its $G_d$-expected number
$$
\mathbb{E}_{G_d}[P] := \frac{1}{|G_d|}\sum_{\sigma\in G_d} P(\sigma)
$$
does not depend on $d$ for $d\geq c$.

Furthermore, if $P$ is the characters of free $\CCat$-modules then $\mathbb{E}_{G_d}[P]$ is a non-negative integer, monotonically increasing in $d$.
\end{cor}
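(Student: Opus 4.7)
The plan is to decategorify Theorem \ref{thm:coinvariant_stabilization} using the standard identity relating the dimension of coinvariants to the average of the character. Concretely, for any finite group $G$ and $G$-representation $V$ with character $\chi$, one has
$$
\dim V_G = \frac{1}{|G|}\sum_{g\in G} \chi(g),
$$
which is the decategorified version of Equation \eqref{eq:coinvariants_average}. So I would like to realize the sum $\mathbb{E}_{G_d}[P]$ as the dimension of $(M/G)_d$ for an appropriate virtual free $\CCat$-module $M_\bullet$, and then invoke coinvariant stabilization.

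First, by Theorem \ref{thm:categorification}, any character polynomial $P$ of degree $\leq c$ is the character of a virtual free $\CCat$-module $M_\bullet = \bigoplus_i \lambda_i \Ind_{c_i}(V_i)$ with $c_i \leq c$ for all $i$. Extending the coinvariants construction linearly to virtual modules, we get
$$
\mathbb{E}_{G_d}[P] = \frac{1}{|G_d|}\sum_{\sigma\in G_d} \chi_{M_d}(\sigma) = \dim (M/G)_d = \sum_i \lambda_i \dim (\Ind_{c_i}(V_i)/G)_d.
$$
By Lemma \ref{lem:coinvariants_of_inductions}, for each summand we have $\dim(\Ind_{c_i}(V_i)/G)_d = \dim (V_i/G_{c_i})$ whenever $c_i \leq d$, and zero otherwise. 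Since $c_i \leq c \leq d$ is assumed, every term contributes its stable value, yielding
$$
\mathbb{E}_{G_d}[P] = \sum_i \lambda_i \dim (V_i/G_{c_i}),
$$
which is manifestly independent of $d$.

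For the second assertion, assume $P$ is the character of an honest (non-virtual) free $\CCat$-module $M_\bullet$, so all $\lambda_i \in \mathbb{Z}_{\geq 0}$. Then each $(M/G)_d$ is a genuine vector space, so $\mathbb{E}_{G_d}[P] = \dim (M/G)_d$ is a non-negative integer. Theorem \ref{thm:coinvariant_stabilization} says that every structural map $(M/G)_d \to (M/G)_{d'}$ (for $d \leq d'$) is injective, which forces the dimensions to be monotonically non-decreasing in $d$, reaching the stable value claimed above once $d \geq c$.

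I do not expect any serious obstacle: the content is entirely contained in Theorem \ref{thm:coinvariant_stabilization} and Theorem \ref{thm:categorification}, and the only subtlety is to make sure that the linear extension of the coinvariants functor to virtual modules commutes with taking dimensions, which is immediate since the dimension is itself an additive invariant over $\mathbb{C}$-linear combinations. The only place one must be a little careful is in noting that, for the monotonicity statement, one needs $M_\bullet$ to be an actual free module rather than a virtual one, so that $(M/G)_\bullet$ consists of vector spaces with injective transition maps rather than a formal difference.
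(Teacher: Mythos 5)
Your argument is correct and follows essentially the same route as the paper: identify $\mathbb{E}_{G_d}[P]$ with $\dim_\C (M/G)_d$ via the averaging projection, invoke the categorification theorem and linearity for general $P$, and use coinvariant stabilization (equivalently Lemma \ref{lem:coinvariants_of_inductions}) for constancy at $d\geq c$ and injectivity of the induced maps for monotonicity in the honest free case. No gaps.
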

\begin{proof}
Recall that for a $G$-representation $V$ the expectation
$$
\frac{1}{|G|}\sum_{g\in G} \Tr(g) = \Tr\left(\frac{1}{|G|}\sum_{g\in G} g\right)
$$
is the trace of the projection $V\twoheadrightarrow V^G$, whose existence also demonstrates that $V^G = V/G$. The expectation is thus $\dim_\C(V/G)$. In particular it is a non-negative integer.

Suppose $P$ is the character of the free $\CCat$-module $M_\bullet$ of degree $\leq c$. By Theorem \ref{thm:coinvariant_stabilization} the coinvariants $(M/G)_\bullet$ is a $\CCat$-module, all of whose induced maps are injections, and isomorphisms for objects $\geq c$. Thus the sequence of dimensions $\dim_\C (M/G)_d$ is monotonic in $d$ and becomes constant when $d\geq c$.

The general statement follows by linearity.
\end{proof}

We are often interested in the $G$-inner product of characters:
$$
\langle \chi_1, \chi_2 \rangle_G = \frac{1}{|G|}\sum_{g\in G} \chi_1(g) \bar{\chi}_2(g) = \mathbb{E}_G[ \chi_1\cdot\bar{\chi}_2 ]
$$
which is central to character theory. For character polynomials the previous corollary gives the following immediate stability statement.
\begin{cor}[\textbf{Stabilization of inner products}]
If $P$ and $Q$ are character polynomials of respective degrees $\leq c_1$ and $\leq c_2$, then the $G_d$-inner products
\begin{equation}
\langle P, Q\rangle_{G_d} = \frac{1}{|G_d|}\sum_{\sigma\in G_d} P(\sigma)\bar{Q}(\sigma)
\end{equation}
does not depend on $d$ for all $d\geq c_1+c_2$.

Furthermore, if $P$ and $Q$ are the characters of free $\CCat$-modules then $\langle P,Q\rangle_{G_d}$ is a non-negative integer, monotonic in $d$.
\end{cor}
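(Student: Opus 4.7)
My plan is to reduce the statement to Corollary \ref{cor:expectation_stabilization} by recognizing the $G_d$-inner product as an expectation of a pointwise product:
\begin{equation*}
\langle P, Q \rangle_{G_d} = \frac{1}{|G_d|} \sum_{\sigma \in G_d} P(\sigma) \bar{Q}(\sigma) = \mathbb{E}_{G_d}[P \cdot \bar{Q}].
\end{equation*}
The entire task then reduces to verifying that $P \cdot \bar{Q}$ is a character polynomial of degree $\leq c_1 + c_2$, after which the Stabilization of Expectation corollary yields the desired independence for all $d \geq c_1 + c_2$.

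The first step will be to observe that $\bar{Q}$ is itself a character polynomial of degree $\leq c_2$. By Definition \ref{def:char_poly} each indicator $\binom{X}{\mu}$ takes values in $\N$ and is therefore real-valued, so complex conjugation of the coefficients of $Q = \sum_\nu \beta_\nu \binom{X}{\nu}$ produces $\bar{Q} = \sum_\nu \bar{\beta}_\nu \binom{X}{\nu}$, which is still a character polynomial with $\deg(\bar{Q}) \leq c_2$. Next I would invoke the Closure under Products corollary (the decategorification of Theorem \ref{thm:intro-free-modules}(1)) to conclude that $P \cdot \bar{Q}$ is a character polynomial of degree $\leq c_1 + c_2$. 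Applying Corollary \ref{cor:expectation_stabilization} to $P \cdot \bar{Q}$ then finishes the first assertion.

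For the refinement to the case when $P = \chi_M$ and $Q = \chi_N$ arise from free $\CCat$-modules $M_\bullet, N_\bullet$ of degrees $\leq c_1$ and $\leq c_2$ respectively, the plan is to exhibit a free module whose character is $P \cdot \bar{Q}$ and then appeal to the second half of Corollary \ref{cor:expectation_stabilization}. The dualization theorem provides a free $\CCat$-module $N^*_\bullet$ of degree $\leq c_2$, whose character is $\bar{Q}$ because over $\C$ one has $\chi_{N^*_d}(g) = \overline{\chi_{N_d}(g)}$. Theorem \ref{thm:intro-free-modules}(1) then produces a free $\CCat$-module $(M \otimes N^*)_\bullet$ of degree $\leq c_1 + c_2$ whose character is $P \cdot \bar{Q}$, and the non-negativity and monotonicity in $d$ of $\langle P, Q \rangle_{G_d}$ fall out of the corresponding assertions in Corollary \ref{cor:expectation_stabilization} applied to this module. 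The only delicate point in the whole argument is the observation that complex conjugation preserves the class of character polynomials along with their degree; this is what permits the symmetric treatment of $P$ and $Q$, and it in turn rests on the non-negative integrality of the indicators $\binom{X}{\mu}$.
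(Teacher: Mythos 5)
Your proposal is correct and follows essentially the same route as the paper: writing $\langle P,Q\rangle_{G_d}=\mathbb{E}_{G_d}[P\bar{Q}]$, invoking the stabilization-of-expectation corollary, and realizing $P\bar{Q}$ as the character of the free module $M\otimes N^*$ for the integrality and monotonicity claims. The paper's proof is merely terser; your explicit checks (conjugation preserves character polynomials and their degree, closure under products bounds $\deg(P\bar{Q})$ by $c_1+c_2$) are exactly the implicit steps it relies on.
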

\begin{proof}
The claim follows directly from the presentation
$$
\langle P, Q\rangle_{G_d} = \mathbb{E}_{G_d}[P\bar{Q}]
$$
and Corollary \ref{cor:expectation_stabilization}.

If $P$ and $Q$ are the characters of $M_\bullet$ and $N_\bullet$ then $P\bar{Q}$ is the character of the free $\CCat$-module $M\otimes N^*$. Integrality and monotonicity follow.
\end{proof}

\section{Noetherian property} \label{sec:noetherian}
In this section we apply the theory developed in the previous sections to prove that the category of $\CCat$-modules is Noetherian. Our proof strategy follows the argument made by Gan-Li in \cite{GL-EI}. The main theorem of this section is the following.
\begin{thm}[\textbf{$\Mod{\CCat}$ is a Noetherian category}]\label{thm:Noetherian}
Every $\CCat$-submodule of a finitely generated $\CCat$-module is itself finitely generated.
\end{thm}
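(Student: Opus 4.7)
The plan is to adapt the approach of Gan-Li \cite{GL-EI} to the axiomatic setting of categories of $\FI$ type. The first step is the standard reduction to representable modules: every finitely generated $\CCat$-module is a quotient of a finite direct sum of induction modules $\Ind_c(V)$, and since each finite group $G_c$ (finite by axiom 1) has semisimple group algebra in characteristic zero, each $V$ is a direct summand of some $\C[G_c]^{\oplus k}$, yielding a surjection from a finite direct sum of representable modules $\C[\Hom_\CCat(c,-)]$. Quotients and finite direct sums preserve the Noetherian property (a submodule of $A \oplus B$ is controlled by its intersection with $A$ and its image in $B$), so it suffices to show that each representable module is Noetherian.

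Next I would reduce the module-theoretic claim to a combinatorial one about $\CCat$-sets. The module $\C[\Hom_\CCat(c,-)]$ is the linearization of the $\CCat$-set $\Hom_\CCat(c,-)$, and in characteristic zero, with all automorphism groups finite, Noetherianity of this linearized module follows from Noetherianity of the underlying $\CCat$-set, i.e., every sub-$\CCat$-set being finitely generated. At each object $d$, the $G_d$-orbits in $\Hom_\CCat(c,d)$ are precisely the classes $[f] \in \binom{d}{c}$, so a sub-$\CCat$-set is encoded by a subset of the total set $\mathcal{B} = \bigsqcup_{[d]} \binom{d}{c}$.

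The combinatorial heart of the argument is to define a quasi-order on $\mathcal{B}$ by
\[
[f : c \to d] \preceq [g : c \to d'] \iff \exists\, h \in \Hom_\CCat(d,d') \text{ with } [h \circ f] = [g],
\]
and to prove that this is a well-quasi-order. Sub-$\CCat$-sets correspond exactly to upward-closed subsets of $\mathcal{B}$, and upward-closed subsets of a well-quasi-order have finitely many minimal elements, hence are finitely generated. To establish the WQO property I would use axioms 4 (only finitely many iso classes below any given object) and 5 (weak pushouts exist): given an infinite sequence, iteratively forming weak pushouts and invoking axiom 4 should force the sequence to be eventually bounded and extract an infinite non-decreasing subsequence.

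The main obstacle is the well-quasi-ordering step. For $\FI$ this reduces to Dickson's lemma on $\N^k$; for $\VI$ the Putman-Sam proof is already considerably more intricate. In the abstract axiomatic setting one must leverage the pullback/weak pushout structure to simulate these classical lemmas without the benefit of explicit arithmetic on indexing sets, likely via an induction on a complexity function that is well-founded thanks to axiom 4. The coinvariant stabilization result (Theorem \ref{thm:coinvariant_stabilization}) should provide useful auxiliary control in this induction: were infinitely many minimal generators needed, the coinvariants of the induced submodule would fail to exhibit the stable behavior guaranteed for free modules, yielding a contradiction.
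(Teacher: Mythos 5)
There is a genuine gap at the step where you pass from modules to $\CCat$-sets: the claim that, in characteristic zero with finite automorphism groups, Noetherianity of $\C[\Hom_\CCat(c,-)]$ follows from Noetherianity of the underlying $\CCat$-set $\Hom_\CCat(c,-)$ is unjustified, and in fact cannot carry the content of the theorem. Note that by axiom 3 the group $G_d$ acts transitively on $\Hom_\CCat(c,d)$, so a subfunctor $S_\bullet\subseteq\Hom_\CCat(c,-)$, being $G_d$-stable at each object, is either empty or everything at each $d$; hence sub-$\CCat$-sets are classified by upward-closed collections of objects and set-level Noetherianity is essentially trivial for every category of $\FI$ type. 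If your reduction were valid, Noetherianity of $\VI_q$-modules would be an immediate consequence, whereas it is a hard theorem of Putman--Sam. The reason the reduction fails is that submodules of $\C[\Hom_\CCat(c,-)]$ are not spanned by subsets of the canonical basis; to control them by combinatorial data one needs, as in Sam--Snowden's Gr\"obner method, a well-ordering of the basis at each object compatible with all morphisms, and categories with nontrivial automorphism groups never admit this directly (one must pass to an auxiliary ``quasi-Gr\"obner'' category, which the axioms of $\FI$ type do not obviously provide). Relatedly, the quasi-order you define on $\mathcal{B}=\bigsqcup_{[d]}\binom{d}{c}$ is a well-quasi-order for trivial reasons (transitivity of the $G_d$-actions makes all classes over comparable objects comparable), so the ``main obstacle'' you identify is not where the difficulty lies; the difficulty is entirely at the linearized level, and your closing suggestion that coinvariant stabilization would produce a contradiction is not an argument.

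The paper proves the theorem by a different route, following Gan--Li rather than Gr\"obner combinatorics: reduce to an ascending chain $X^0_\bullet\subseteq X^1_\bullet\subseteq\cdots\subseteq M_\bullet$ with $M_\bullet$ free of degree $\leq c$, and apply the left-exact functor $F(X)_d=\Hom_{G_d}(M_d,X_d)$. Closure of free modules under duals and tensor products together with coinvariant stabilization (Theorem \ref{thm:coinvariant_stabilization}) shows that the extension maps $F(M)_d\to F(M)_e$ are isomorphisms for $d\geq c+c$, and injectivity of the extension maps on $F(X^n)$ in that range reduces the chain condition to Noetherianity of the finite-dimensional vector space $F(M)_d$; Maschke's theorem (characteristic $0$) then converts the resulting equalities $\Hom_{G_e}(M_e,X^{N_0}_e)=\Hom_{G_e}(M_e,X^n_e)$ into $X^{N_0}_e=X^n_e$ for all large objects, and axiom 4 handles the finitely many isomorphism classes of small objects. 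Your first reduction (to finite sums of representables, using semisimplicity and stability of Noetherianity under quotients and finite sums) is fine and parallels the paper's reduction to free modules, but the core of your argument as proposed would not go through.
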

Theorem \ref{thm:Noetherian} will be proved at the end of this section. First we need some preliminary results. We start with an extension result for equivariant homomorphisms between free $\CCat$-modules.
\begin{lem}[\textbf{Equivariant extension}]\label{lem:extensions}
Let $M_\bullet$ be a free $\CCat$-module. There is a left-exact endofunctor on $\CCat$-modules
\begin{equation}
N_\bullet \mapsto \Hom_{G_\bullet}(M_\bullet,N_\bullet)
\end{equation}
whose image is contained in trivial $\CCat$-modules. The value  of the module $\Hom_{G_\bullet}(M_\bullet,N_\bullet)$ at an object $d$ is the vector space $\Hom_{G_d}(M_d,N_d)$. In particular, for every $d\leq e$ there is a canonical map
\begin{equation}
\Hom_{G_d}(M_d,N_d)\mor{\Psi_d^e} \Hom_{G_e}(M_e,N_e)
\end{equation}
that promotes a $G_d$-linear map to a $G_e$-linear one.

Furthermore, if $N_\bullet$ is itself free, and the degrees of $M_\bullet$ and $N_\bullet$ are $\leq c_1$ and $\leq c_2$ respectively, then the extension map $\Psi_d^e$ is an isomorphism whenever $d\geq c_1+c_2$. In particular, equivariant morphisms extend uniquely in this range.
\end{lem}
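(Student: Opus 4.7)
The plan is to build the functor by combining Corollary \ref{cor:hom_module} with the coinvariant construction of Section \ref{sec:coinvariants}, exploiting the characteristic-zero identification of invariants and coinvariants to reduce the stability claim to Theorem \ref{thm:coinvariant_stabilization}. Once the functor is set up in this way, the stability statement becomes essentially a corollary of what has already been established.

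First I would set $\Hom_{G_\bullet}(M_\bullet, N_\bullet)_d := \Hom_{G_d}(M_d, N_d)$, which at each object is the $G_d$-fixed subspace of the $\CCat$-module $\Hom(M,N)_\bullet \cong M^*\otimes N$ from Corollary \ref{cor:hom_module} (under the conjugation $G_d$-action on $\Hom_\C(M_d, N_d)$). The transition map $\Psi_d^e$ is the restriction of the structural map $f_\ast$ to $G_d$-invariants for any $d\mor{f}e$. The key compatibility to check is that $f_\ast$ actually carries $G_d$-invariants into $G_e$-invariants: given $g\in G_e$, transitivity of $G_e$ on $\Hom_\CCat(d,e)$ (axiom 3 of $\FI$ type) yields $h\in G_d$ with $g\circ f = f\circ h$, so $g$ acts on $f_\ast(T)$ through the action of $h$ on $T$, which is trivial when $T\in\Hom_{G_d}(M_d,N_d)$. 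This simultaneously shows well-definedness of the transition map, that the image is a trivial $\CCat$-module, and that the map $\Psi_d^e$ is indeed independent of the chosen $f$.

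Next I would recast the functor through coinvariants. For each $d$, since $|G_d|$ is invertible in $\C$, the averaging idempotent $\frac{1}{|G_d|}\sum_{g\in G_d}g$ gives a canonical isomorphism
\begin{equation*}
(M^*\otimes N)_d/G_d \;\mor{\sim}\; \Hom_\C(M_d,N_d)^{G_d} = \Hom_{G_d}(M_d,N_d)
\end{equation*}
and this is natural in $\CCat$-morphisms, so it upgrades to an isomorphism of $\CCat$-modules $\Hom_{G_\bullet}(M_\bullet,N_\bullet)\cong (M^*\otimes N)/G$. Since tensoring over $\C$ with $M^*$ is exact, and coinvariants of finite-group representations are exact in characteristic zero (because $\C[G_d]$-modules are semisimple), the composite is exact, in particular left-exact.

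Finally, for the stability claim I would assume $N_\bullet$ is free of degree $\leq c_2$, and invoke parts (1)--(3) of Theorem \ref{thm:intro-free-modules}: $M^*$ is free of degree $\leq c_1$ (duality), the tensor product $M^*\otimes N$ is free (closure under $\otimes$), and its degree is $\leq c_1+c_2$ (additivity). Applying Theorem \ref{thm:coinvariant_stabilization} to $M^*\otimes N$, all the maps $(M^*\otimes N)_d/G_d\to(M^*\otimes N)_e/G_e$ are isomorphisms once $d\geq c_1+c_2$, and by the previous paragraph this is exactly the desired statement for $\Psi_d^e$. The only real obstacle is the naturality compatibility in the second step (i.e.\ checking that invariants form a sub-$\CCat$-module and that the two $\CCat$-structures on $\Hom_{G_\bullet}(M_\bullet,N_\bullet)$ agree); everything else is assembly of results already proved, with the genuinely nontrivial input being Theorem \ref{thm:intro-free-modules}(1) on closure under tensor products.
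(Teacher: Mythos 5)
Your proposal is correct and follows essentially the same route as the paper: realize $\Hom_{G_\bullet}(M_\bullet,N_\bullet)$ via the free module $M^*\otimes N$ and its coinvariants, identify coinvariants with invariants by averaging in characteristic zero, and then deduce the stable range from Theorem \ref{thm:intro-free-modules}(1) together with Theorem \ref{thm:coinvariant_stabilization}. The extra verification you carry out (that the structural maps send $G_d$-invariants to $G_e$-invariants, using transitivity of the $G_e$-action on $\Hom_\CCat(d,e)$) is a detail the paper leaves implicit, not a different argument.
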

\begin{proof}
To get the proposed endofunctor we use dualization, tensor products and coinvariants:
\begin{equation}
N_\bullet \mapsto \left(M^*\otimes N\right)_\bullet \mapsto \left[\left(M^*\otimes N\right)/G\right]_\bullet
\end{equation}
This gives rise to an endofunctor whose value at $d$ is
$$
\left(M^*_d\otimes N_d\right)/G_d.
$$
The tensor product is naturally isomorphic to $\Hom_\C(M_d,N_d)$ and averaging over $G_d$ gives a natural lift from coinvariants to invariants. Thus the value at $d$ is naturally isomorphic to
$$
\Hom_{\C}(M_d,N_d)^{G_d} = \Hom_{G_d}(M_d,N_d)
$$
and indeed the desired functor exists. Left exactness follows from the general fact that the functor $\Hom(M,\bullet)$ is left exact.

Lastly, if $M_\bullet$ and $N_\bullet$ are free of respective degrees $\leq c_1$ and $\leq c_2$ then by Theorem \ref{thm:intro-free-modules}(1) $M^*\otimes N$ is free of degree $\leq c_1+c_2$. We then apply Theorem \ref{thm:coinvariant_stabilization} and see that its coinvariants stabilize for all $d\geq c_1+c_2$ in the sense that all induced maps $\Psi_d^e$ are isomorphisms.
\end{proof}

When the range $N_\bullet$ is not free we cannot guarantee that the extension maps $\Psi_d^e$ be eventually isomorphisms. But in the case where the range is contained in a free module, we can at least salvage injectivity.
\begin{cor}[\textbf{Injective extension}]\label{cor:injective_extension}
If $M_\bullet$ and $N_\bullet$ are free $\CCat$-modules of respective degrees $\leq c_1$ and $\leq c_2$, and $X_\bullet\subseteq N_\bullet$ is any $\CCat$-submodule, then the extension maps
\begin{equation}
\Hom_{G_d}(M_d,X_d) \mor{\Psi_d^e} \Hom_{G_e}(M_e,X_e)
\end{equation}
are injective for all $e\geq d\geq c_1+c_2$.
\end{cor}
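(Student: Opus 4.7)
The plan is to bootstrap from the free-module result of Lemma \ref{lem:extensions} by comparing the extension map for the submodule $X_\bullet$ against the corresponding extension map for the ambient free module $N_\bullet$, which is already known to be an isomorphism in the stable range $d\geq c_1+c_2$.

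First I would apply the left-exactness of the functor $\Hom_{G_\bullet}(M_\bullet,-)$ established in Lemma \ref{lem:extensions} to the inclusion $X_\bullet\hookrightarrow N_\bullet$. This yields a $\CCat$-submodule inclusion $\Hom_{G_\bullet}(M_\bullet,X_\bullet)\hookrightarrow \Hom_{G_\bullet}(M_\bullet,N_\bullet)$; in other words, for every object $d$ the $G_d$-equivariant maps into $X_d$ form a subspace of those into $N_d$, and these subspaces are carried into each other by the extension maps $\Psi_d^e$. For $e\geq d\geq c_1+c_2$ this produces a commutative square whose top row is $\Psi_d^e\colon \Hom_{G_d}(M_d,X_d)\to\Hom_{G_e}(M_e,X_e)$, whose bottom row is $\Psi_d^e\colon \Hom_{G_d}(M_d,N_d)\to \Hom_{G_e}(M_e,N_e)$, and whose two vertical arrows are the post-composition inclusions induced by $X_\bullet\hookrightarrow N_\bullet$.

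By Lemma \ref{lem:extensions} applied to the pair of free modules $M_\bullet$ and $N_\bullet$ of degrees $\leq c_1$ and $\leq c_2$, the bottom arrow is an isomorphism once $d\geq c_1+c_2$, while both vertical arrows are injective by construction. A short diagram chase then finishes the argument: if $T\in \Hom_{G_d}(M_d,X_d)$ satisfies $\Psi_d^e(T)=0$, then the image of $T$ in $\Hom_{G_e}(M_e,N_e)$ vanishes, so inverting the bottom isomorphism forces the image of $T$ in $\Hom_{G_d}(M_d,N_d)$ to be zero, and then injectivity of the left vertical arrow yields $T=0$.

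I do not anticipate any serious obstacle: the substantive analytic content has already been packaged into Lemma \ref{lem:extensions}, and the only minor point to verify is that the maps $\Psi_d^e$ built for $X_\bullet$ really do coincide with the restrictions of those built for $N_\bullet$. This is immediate from the naturality in the target $\CCat$-module that is built into the construction of $\Hom_{G_\bullet}(M_\bullet,-)$ via duals, tensor products, and coinvariants.
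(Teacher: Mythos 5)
Your proposal is correct and follows essentially the same route as the paper: both form the commutative square obtained by applying the left-exact functor $\Hom_{G_\bullet}(M_\bullet,-)$ to the inclusion $X_\bullet\hookrightarrow N_\bullet$, use Lemma \ref{lem:extensions} to see that the extension map for the ambient free module $N_\bullet$ is an isomorphism for $d\geq c_1+c_2$, and conclude injectivity of the extension map for $X_\bullet$ by a short diagram chase (the paper phrases it as ``all but the leftmost map are injections''). No gaps.
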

\begin{proof}
For every $e\geq d$ we have a commutative square of extensions
\squareDiagram{\Hom_{G_d}(M_d,X_d)}{\Hom_{G_d}(M_d,N_d)}{\Hom_{G_e}(M_e,X_e)}{\Hom_{G_e}(M_e,N_e)}{X_d\hookrightarrow N_d}{\Psi_d^e}{\Psi_d^e}{X_e\hookrightarrow N_e}
and since $M$ and $N$ are free of the given degrees, it follows that the rightmost extension map is an isomorphism when $d\geq c_1+c_2$. Furthermore, the two horizontal maps are injective by left-exactness. Thus we have a square in which all but the leftmost map are injections. This implies that the leftmost map is injective as well.
\end{proof}

We are now ready to prove that $\Mod{\CCat}$ has the Noetherian property.
\begin{proof}[Proof of Theorem \ref{thm:Noetherian}]
Suppose that $M_\bullet$ is a finitely generated $\CCat$-module and $X^0_\bullet\subseteq X^1_\bullet \subseteq \ldots \subseteq M_\bullet$ is an ascending chain of submodules. We need to show that $X^N = X^{N+1}=\ldots$ for some $N\in \N$. As in the standard proofs of Hilbert's Basis Theorem, we divide the task into two parts: controlling growth in all large degrees, then handling lower degrees using Noetherian property of finite direct sums.

We assume without loss of generality that $M_\bullet$ is a free, finitely-generated $\CCat$-module of degree $c$, as every finitely-generated $\CCat$-module is a quotient of a finite sum of such. For brevity we denote the functor $X_\bullet \mapsto \Hom_{G_\bullet}(M_\bullet, X_\bullet)$ by $F$, i.e.
$$
F(X)_d := \Hom_{G_d}(M_d, X_d).
$$
Since $M_\bullet$ is free of degree $\leq c$, it follows that all induced extension maps
$$
F(M)_d\mor{\Psi_d^e} F(M)_e
$$
are isomorphisms when $d\geq c+c$. Fix an object $d\geq c+c$. We get a collection of subspaces inside $F(M)_d$ by considering the images
\begin{equation}
\left\{F(X^n)_e \hookrightarrow F(M)_e \mor{(\Psi_d^e)^{-1}} F(M)_{d} \right\}_{n\in \N,\; e\geq d}
\end{equation}
Since $F(M)_{d}$ is itself Noetherian (a finite dimension vector space), this collection of subspaces has a maximal element, say the image of $F(X^{N_0})_{e_0}$.
\begin{claim}
For all $n\geq N_0$ and $e\geq e_0$ we have $X^n_e = X^{N_0}_e$. 
\end{claim}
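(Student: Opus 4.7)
The plan is to prove the claim in two stages: first establish the corresponding equality for hom-spaces, $F(X^n)_e = F(X^{N_0})_e$ as subspaces of $F(M)_e$, and then descend from this to the equality of submodules $X^n_e = X^{N_0}_e$ by invoking semisimplicity of $G_e$-representations over $\C$.

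For the first stage, I would use that $X^{N_0} \subseteq X^n$ together with left-exactness of $F$ (Lemma \ref{lem:extensions}) to get $F(X^{N_0})_e \subseteq F(X^n)_e \subseteq F(M)_e$. Writing $\pi_e := (\Psi_d^e)^{-1}: F(M)_e \mor{\sim} F(M)_d$ for the inverse of the extension isomorphism (which exists for $e \geq d \geq c+c$ since $M_\bullet$ is free of degree $\leq c$), naturality of $\Psi$ gives $\pi_e \circ \Psi_{e_0}^e = \pi_{e_0}$. Applying the injectivity of $\Psi_{e_0}^e$ on $F(X^{N_0})$ from Corollary \ref{cor:injective_extension}, together with the containment $X^{N_0} \subseteq X^n$, yields $\pi_{e_0}(F(X^{N_0})_{e_0}) \subseteq \pi_e(F(X^n)_e)$ inside $F(M)_d$. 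Maximality of the left-hand side among the collection then forces equality, and applying $\Psi_d^e$ returns $F(X^n)_e = F(X^{N_0})_e$ inside $F(M)_e$.

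For the second stage, which I expect to be the main obstacle, my plan is to use that every finite-dimensional $G_e$-representation is semisimple over $\C$. The inclusion $X^{N_0}_e \subseteq X^n_e$ already holds, so I only need the reverse. It suffices to show that every irreducible $G_e$-subrepresentation $U \subseteq X^n_e$ lies in $X^{N_0}_e$. Because $U \subseteq X^n_e \subseteq M_e$ and $M_e$ is semisimple, $U$ is a direct summand of $M_e$ and there is a $G_e$-equivariant projection $\phi: M_e \twoheadrightarrow U$ that is the identity on $U$; composing with the inclusion $U \hookrightarrow X^n_e$ realizes $\phi$ as an element of $F(X^n)_e = F(X^{N_0})_e$, so the image $U = \phi(M_e)$ must in fact lie in $X^{N_0}_e$. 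This step is delicate precisely because the hom-space $F(X^n)_e$ detects the subrepresentation $X^n_e \subseteq M_e$ only up to the part captured by maps out of $M_e$, and it is semisimplicity (i.e.\ the characteristic-zero hypothesis) that lets me recover $X^n_e$ itself from this data.
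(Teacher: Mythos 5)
Your proposal is correct and follows essentially the same route as the paper: the hom-space equality $F(X^n)_e = F(X^{N_0})_e$ is obtained from the commutative diagram of extension maps, the injectivity of Corollary \ref{cor:injective_extension}, and maximality of the image of $F(X^{N_0})_{e_0}$ inside $F(M)_d$, exactly as in the paper's proof. Your second stage simply spells out the paper's appeal to Maschke's theorem (semisimplicity over $\C$) via projections onto irreducible summands, so the two arguments coincide.
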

\begin{proof}
For every $n\geq N_0$ and objects $e \geq e_0$ we have a commutative diagram
$$
\xymatrix{
F(X^{N_0})_{e_0} \ar@{^{(}->}[r] \ar[d]_{\Psi_{e_o}^e} &
 F(X^{n})_{e_0} \ar@{^{(}->}[r] \ar[d]_{\Psi_{e_o}^e} &
  F(M)_{e_0} \ar[d]_{\Psi_{e_o}^e} \ar[dr]^{(\Psi_d^{e_o})^{-1}} & \\
F(X^{N_0})_{e} \ar@{^{(}->}[r] &
 F(X^{n})_{e} \ar@{^{(}->}[r] &
  F(M)_e \ar[r]^{(\Psi_d^e)^{-1}} &
  F(M)_{d}
}
$$
which by Corollary \ref{cor:injective_extension} all vertical extension maps are injective.

But we chose $F(X^{N_0})_{e_0}$ to be the subspace whose image inside $F(M)_{d}$ is maximal. It thus follows that all arrows in the above diagram are surjective. In particular, the injection $F(X^{N_0})_e \hookrightarrow F(X^n)_e$ is an isomorphism. Recalling the definition of $F$, we found that the inclusion
$$
\Hom_{G_e}(M_e, X^{N_0}_e) \hookrightarrow \Hom_{G_e}(M_e, X^{n}_e)
$$
is an isomorphism, where $X^{N_0}_e\subseteq N^n_e\subseteq M_e$ are $G_e$-subrepresentations. By Mashke's theorem, this happens precisely when $X^{N_0}_e = X^{n}_e$ thus proving the claim.
\end{proof}
It remains to show that we can find some $N_1\geq N_0$ such that for all objects $e < e_0$ the term $X^{N_1}_e$ stabilized. Indeed, since $\CCat$ is of $\FI$ type, there are only finitely many isomorphism classes of objects $\leq e_0$. Pick representatives for them $e_1,\ldots,e_n$ and consider the direct sum
$$
\oplus_{k=1}^n M_{e_k}.
$$
Since each $M_{e_k}$ is Noetherian (a finite dimensional vector space), this direct sum is Noetherian as well. We can therefore find $N_1 \geq N_0$ for which the sum
$$
\oplus_{k=1}^n X^{N_1}_{e_k} \subseteq \oplus_{k=1}^n M_{e_k}
$$
stabilized. Now for every $n \geq N_1$ and every object $e$ we have $X^{N_1}_e =X^n_e$ thus showing that $X^{N_1}_\bullet$ is a maximal element of our chain.
\end{proof}

\begin{remark}
	Making contact with related work, we remark that in \cite[Theorem 1.1]{GL-EI} Gan-Li list a set of combinatorial condition on categories of a certain type, which are sufficient for proving the Noetherian property \cite{GL-EI}. Their conditions are
	\begin{itemize} 
		\item \textbf{Surjectivity}: The groups $G_d$ act transitively on incoming morphisms $c\mor{}d$.
		\item \textbf{Bijectivity}: Some sequence of double-coset spaces $H_d \backslash G_d /H_d$ stabilizes as $d$ get sufficiently large.
	\end{itemize}
	These conditions are related to the present context as follows. First, the Surjectivity condition is incorporated into our definition of categories of $\FI$ type. As for Bijectivity, it was explained to me by Kevin Casto that by choosing a compatible system of morphisms $c\mor{}d$ for every pair $c\leq d$ one gets a natural isomorphism
	$$
	G_d \backslash \left(\Hom_\CCat(c,d)\times \Hom_\CCat (c,d)\right) \cong H_d \backslash G_d / H_d
	$$
	where $H_d \backslash G_d / H_d$ is the double-coset space the appears in the Bijectivity condition. In this sense, the objects considered in this work are a coordinate-free interpretation of those the appeared in \cite{GL-EI}. Arguing in this coordinate-free manner allows us to consider categories whose objects are not linearly ordered, avoid having to find a compatible system of morphisms, and show that the bijectivity condition holds for all categories of $\FI$ type. This is a direct result of Lemma \ref{lem:products_set_version}.
\end{remark}

The following stabilization result is a central motivation for one to be interested in the Noetherian property. It shows that finitely-generated $\CCat$-modules exhibit the same representation stability phenomena as free $\CCat$-module, only without the explicit stable range.
\begin{thm}[\textbf{Stabilization of finitely-generated $\CCat$-modules}]
If $M_\bullet$ is a finitely-generated $\CCat$-module, then all induced maps in the associated module of coinvariants are eventually isomorphisms. That is, there exists an upward-closed and cofinal set of objects $X$ such that if $c\in X$ and $d\geq c$, then the induced map 
$$
M_c/G_c \mor{} M_d/G_d
$$
is an isomorphism.

More generally, if $F_\bullet$ is any free $\CCat$-module, then the coinvariants of $F\otimes M$ eventually stabilizes in the above sense. In particular, the spaces $\Hom_{G_c}(F_c,M_c)$ stabilize as well.
\end{thm}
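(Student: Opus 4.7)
First I would produce a two-term free presentation. Since $M_\bullet$ is finitely generated, there is a surjection $F^0_\bullet \twoheadrightarrow M_\bullet$ from a finitely generated free $\CCat$-module. The kernel is a submodule of $F^0_\bullet$, so by the Noetherian property (Theorem \ref{thm:Noetherian}) it is itself finitely generated, and covering it by a finitely generated free $\CCat$-module $F^1_\bullet$ yields an exact sequence
\[
F^1_\bullet \longrightarrow F^0_\bullet \longrightarrow M_\bullet \longrightarrow 0
\]
with $F^0_\bullet$ and $F^1_\bullet$ both finitely generated and free, of degrees bounded by objects $c_0$ and $c_1$ respectively.

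Next I would apply the coinvariants functor objectwise. Since $(-)/G_d = \C \otimes_{G_d} (-)$ is right-exact, for every object $d$ we obtain a right-exact sequence $(F^1/G)_d \to (F^0/G)_d \to (M/G)_d \to 0$, and this sequence is natural in $d$. Define
\[
X := \{\, d \in \Ob(\CCat) \mid d \geq c_0 \text{ and } d \geq c_1 \,\}.
\]
By Theorem \ref{thm:coinvariant_stabilization}, for any morphism $d \to e$ with $d \in X$ the induced maps on the two left-hand terms are isomorphisms; a routine cokernel chase then forces the induced map $(M/G)_d \to (M/G)_e$ to be an isomorphism as well. The set $X$ is upward-closed by transitivity of $\leq$, and it is cofinal in $\CCat$: given any object $d'$, iterating weak push-outs of $d'$ with $c_0$ and $c_1$ produces an object in $X$ above $d'$. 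This establishes the first claim.

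For the more general statement, observe that if $F_\bullet$ is any free $\CCat$-module, then $F \otimes M$ is finitely generated: tensoring the presentation of $M_\bullet$ with $F_\bullet$ produces a surjection $F \otimes F^0 \twoheadrightarrow F \otimes M$, and $F \otimes F^0$ is free and finitely generated by Theorem \ref{thm:intro-free-modules}(1). Applying the first part to $F \otimes M$ gives the desired stabilization of $(F \otimes M)/G$. Finally, to handle $\Hom_{G_c}(F_c, M_c)$, I would use that in characteristic $0$ the projection onto invariants splits, so
\[
\Hom_{G_c}(F_c, M_c) \cong (F^* \otimes M)_c / G_c
\]
naturally in $c$; since $F^*_\bullet$ is again free (by the dualization results of \S\ref{sec:free-modules}), the preceding case applies to $F^* \otimes M$ and yields the claim.

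The main obstacle I anticipate is not any single algebraic manipulation but rather the handling of the stable range in the abstract categorical setting: one must verify carefully that a common upper bound $d$ with $d \geq c_0$ and $d \geq c_1$ exists and that the resulting set $X$ is cofinal, rather than taking ``$d \gg 0$'' for granted as one would when objects are indexed by $\N$. This is exactly where the weak push-out axiom in the definition of $\FI$ type is used, and it is the reason the stable range must be phrased in terms of an upward-closed cofinal set rather than a single threshold object.
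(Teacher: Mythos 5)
Your proposal is correct and follows essentially the same route as the paper: a free presentation $F^1 \to F^0 \to M \to 0$ obtained from finite generation plus the Noetherian property, right-exactness of coinvariants combined with Theorem \ref{thm:coinvariant_stabilization} and a cokernel chase (the paper invokes the Five Lemma), then tensoring the presentation with $F$ and using Theorem \ref{thm:intro-free-modules}(1), and finally dualizing and using characteristic $0$ to identify $\Hom_{G_c}(F_c,M_c)$ with $(F^*\otimes M)_c/G_c$. Your explicit attention to the stable range $X$ and its cofinality is a reasonable elaboration of a point the paper's proof leaves implicit.
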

\begin{proof}
By the Noetherian property, a finitely-generated $\CCat$-module is finitely-presented, i.e. there exist free $\CCat$-modules $F^i_\bullet$ for $i=0,1$ and an exact sequence
$$
\xymatrix{
F^1 \ar[r] &
 F^0 \ar[r] &
  M \ar[r] &
  0
}
$$
Since the functor of coinvariants is right-exact we get a similar sequence of coinvariants. But by Theorem \ref{thm:coinvariant_stabilization} the coinvariants of a free $\CCat$-module stabilize in the desired sense. The Five-Lemma then implies that the same stabilization occurs for $M/G$.

For the more general statement, suppose $F_\bullet$ is some free $\CCat$-module. By the right-exactness of the tensor product it follows that
$$
\xymatrix{
F\otimes F^1 \ar[r] &
 F\otimes F^0 \ar[r] &
  F\otimes M \ar[r] &
  0
}
$$
is itself exact. Theorem \ref{thm:intro-free-modules}(1) shows that for $i=0,1$ the product $F\otimes F^i$ is free. Thus by the same reasoning as above stabilization follows.

Lastly, replacing $F$ with its dual $F^*$ (which is again free) and using the isomorphisms
$$
(F_c^*\otimes M_c )/G_c \cong \Hom(F_c,M_c)^{G_c} = \Hom_{G_c}(F_c,M_c)
$$
we find that the spaces on the right-hand side stabilize as well.
\end{proof}

\section{Example: Representation stability for $\FI^m$} \label{sec:rep_theory_of_FI}
This section is devoted to the category $\FI^m$, its free modules, and representation stability in this context. We also give an explicit description of $\FI^m$-character polynomials in terms of cycle-counting functions. The results presented below generalize to the category $(\FI_G)^m$ and its representation, where $G$ is some finite group, using the technique presented in \cite[Theorem 3.1.3]{SS-FIG}.

Recall that we denote the category of finite sets and injective functions by $\FI$. Consider the categorical power $\FI^m$, whose objects are ordered $m$-tuples of finite sets $\bar{n} = (n^{(1)},\ldots, n^{(m)})$, and whose morphisms $\bar{n}\mor{\bar{f}}\bar{n}'$ are ordered $m$-tuples of injections $\bar{f} = (f^{(1)},\ldots, f^{(m)})$ where $n^{(i)}\mor{f^{(i)}}n'^{(i)}$. In everything that follows we denote the $\FI^m$ analog of notions from $\FI$ by an over-line. The ordering on objects in $\FI^m$ is the following: $\bar{n}\leq \bar{n}'$ if and only if for every $1\leq i\leq m$ there is an inequality of sizes $|n^{(i)}|\leq |n'^{(i)}|$. The group of automorphisms of an object $\bar{n}$ is the product of symmetric groups $S_{n^{(1)}}\times\ldots\times S_{n^{(m)}}$, which we will denote by $S_{\bar{n}}$.

Many natural sequences of spaces and varieties are naturally parameterized by $\FI^m$. For example, fix some space $X$ and consider the following generalization of the configurations spaces
$$
\PConf^{(n_1,\ldots,n_m)}(X) := \{ [(x_1^{(1)},\ldots,x_{n_1}^{(1)}),\ldots,(x_1^{(m)},\ldots,x_{n_m}^{(m)})] \mid \forall i\neq j (x^{(i)}_{k_i}\neq x^{(j)}_{k_j}) \}
$$
inside the product $X^{n_1}\times \ldots \times X^{n_m}$. Every inclusion $\bar{n}\hookrightarrow \bar{n}'$ induces a continuous map by forgetting coordinates, so this is naturally a contravariant $\FI^m$-diagram of spaces. Applying a cohomology functor to this diagram of spaces yields an $\FI^m$-module. The special case of based rational maps $\Proj^1\mor{} \Proj^{m-1}$ was described in the introduction, to which theory below applies and gives Corollary \ref{thm:intro-homological-stability}.

\bigskip The category $\FI^m$ fits in with our general framework, as the following demonstrates.
\begin{prop}[\textbf{$\FI^m$ is of $\FI$ type}]
$\FI^m$ is a locally finite category of $\FI$ type. Pullbacks and weak push-outs are given by the corresponding operations in $\FI$ applied coordinatewise.
\end{prop}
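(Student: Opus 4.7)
The plan is to verify each of the five axioms of Definition \ref{def:FI-type} for $\FI^m$ by reducing to the corresponding known properties of $\FI$, using the fact that all the relevant data decomposes coordinatewise. Specifically, for objects $\bar{c},\bar{d}$ of $\FI^m$ one has a natural bijection
$$
\Hom_{\FI^m}(\bar{c},\bar{d})=\prod_{i=1}^m\Hom_{\FI}(c^{(i)},d^{(i)}),
$$
and in particular $\Aut_{\FI^m}(\bar{n})=S_{\bar{n}}=\prod_{i=1}^m S_{n^{(i)}}$ acts on this product coordinatewise. Every verification below rests on this decomposition.

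First, local finiteness (axiom 1) is immediate since each factor $\Hom_{\FI}(c^{(i)},d^{(i)})$ is finite. Axiom (2) follows because a tuple of morphisms in a product category is a monomorphism (resp.\ isomorphism) if and only if each coordinate is, and in $\FI$ all morphisms are monic while all endomorphisms are automorphisms. For axiom (3), transitivity of the $S_{\bar{d}}$-action on $\Hom_{\FI^m}(\bar{c},\bar{d})$ reduces immediately to the classical fact that $S_{d^{(i)}}$ acts transitively on $\Hom_{\FI}(c^{(i)},d^{(i)})$. For axiom (4), $\bar{c}\leq\bar{d}$ forces $|c^{(i)}|\leq|d^{(i)}|$ for every $i$, and there are only finitely many isomorphism classes of finite sets of bounded cardinality.

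For axiom (5), I would argue that both pullbacks and weak push-outs in $\FI^m$ are computed coordinatewise. For pullbacks this is the general fact that limits in a product category exist and are computed coordinatewise; since $\FI$ has pullbacks, so does $\FI^m$. For weak push-outs, given a span $\bar{c}_1\xleftarrow{\bar{f}_1}\bar{p}\xrightarrow{\bar{f}_2}\bar{c}_2$, I would define
$$
\bar{c}_1\coprod_{\bar{p}}\bar{c}_2 := \Bigl(c_1^{(i)}\coprod_{p^{(i)}}c_2^{(i)}\Bigr)_{i=1}^m
$$
using the $\FI$-weak push-outs in each coordinate, together with the obvious structure morphisms from $\bar{c}_1,\bar{c}_2$. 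To verify the universal property, suppose we are given a pullback square in $\FI^m$ with apex $\bar{p}$ and tip $\bar{z}$; one coordinate at a time, this is a pullback square in $\FI$ (again by coordinatewise computation of limits), so for each $i$ the universal property of the $\FI$-weak push-out produces a unique morphism $c_1^{(i)}\coprod_{p^{(i)}}c_2^{(i)}\to z^{(i)}$, and these assemble into the required unique morphism in $\FI^m$.

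The main point to be careful about is the last paragraph: weak push-outs have a universal property quantified only over \emph{pullback} cones, not arbitrary commutative cones, so the reduction to $\FI$ requires that a coordinatewise pullback square really is a pullback square in $\FI^m$. This is automatic from the coordinatewise description of limits in a product category, but it is the only step that is not purely formal and should be spelled out. Once that is noted, uniqueness of the induced map also propagates coordinatewise, completing the verification.
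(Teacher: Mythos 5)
Your coordinatewise reduction of $\FI^m$ to $\FI$ is sound, and your remark that the universal property of a weak push-out is quantified only over pullback cones---so one must know that a square in $\FI^m$ is a pullback precisely when each coordinate square is a pullback in $\FI$---is exactly the right point to spell out; the paper itself dispatches the whole $m>1$ reduction in a single sentence. The genuine gap is at the base of your reduction: you take for granted that $\FI$ itself has pullbacks and, more importantly, weak push-outs (``using the $\FI$-weak push-outs in each coordinate''). Within this paper that is not a citable fact: the notion of weak push-out is introduced in Definition \ref{def:weak-pushout} specifically because honest push-outs fail in categories of injections, and this proposition (which includes the case $m=1$, since $\FI^1=\FI$) is exactly where the existence of weak push-outs in $\FI$ gets established. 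Axioms (1)--(4), which you verify by reduction to classical facts about finite sets, are indeed the easy part; the substantive content of the proof is the $m=1$ instance of axiom (5), and your argument never supplies it.

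Concretely, the missing step---which is the bulk of the paper's own proof---runs as follows. For pullbacks: the $\Set$-pullback of two injections has injective structure maps, and the mediating map from any cone of injections is itself injective, so the $\Set$-pullback is a pullback in $\FI$. For weak push-outs: given a span $c_1 \xleftarrow{f_1} p \xrightarrow{f_2} c_2$ of injections, form the $\Set$-push-out $c_1\cup_p c_2$; the key observation is that in any \emph{pullback} square of injections over a tip $z$, the images of $c_1$ and $c_2$ inside $z$ intersect exactly in the image of $p$, so the canonical $\Set$-map $c_1\cup_p c_2 \to z$ is injective and hence is a morphism of $\FI$ (uniqueness comes from the ordinary push-out property in $\Set$, and the same intersection observation shows the push-out square is itself a pullback). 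This is why the $\Set$-push-out serves as the weak push-out in $\FI$, and it is the only step in the whole proposition that uses anything beyond formal coordinatewise bookkeeping. Everything you build on top of it is correct, but without it your appeal to ``the $\FI$-weak push-outs in each coordinate'' has nothing to stand on.
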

\begin{proof}
First we consider the case $m=1$, i.e. we need to show that $\FI$ is indeed of $\FI$ type. $\FI$ is a subcategory of the category of finite sets, which has pullbacks and push-outs. The $\Set$-pullback of two injections itself has injective structure maps, and is thus naturally a pullback in $\FI$. Regarding weak push-outs, note that if
\squareDiagram{p}{c_1}{c_2}{d}{f_1}{f_2}{g_2}{g_2}
is a pullback diagram in $\Set$, then the images of $g_1$ and $g_2$ intersect precisely in the image of the composition $g_1\circ f_1 = g_2\circ f_2$. Thus if all four maps are injections, the universal function from the $\Set$ push-out $c_1\cup_p c_2$ into $d$ is injective. We therefore see that the $\Set$ push-out is a weak push-out in $\FI$. The other axioms of $\FI$ type are clear.

The case $m>1$ follows easily from the previous paragraph when pullbacks and weak push-outs are computed coordinatewise.
\end{proof}
%\begin{remark}
%It will be useful to note that every push-out of sets $n_1\coprod_p n_2$ injects into the disjoint union $n_1\coprod n_2$, and a similar statement holds for iterated push-outs. Thus the addition operation on objects coincides with the usual addition of natural numbers. In the case $m>1$ the addition operation on objects coincides with coordinatewise addition of natural numbers, and similarly $i\times (n_1,\ldots,n_m) = (i\times n_1,\ldots, i\times n_m)$.
%\end{remark}

We turn to the decomposition into irreducible subrepresentations. First we recall some of the terminology related to the case $m=1$.
\begin{definition}[\textbf{Padded partitions and irreducible representations}]
Recall that a \emph{partition} of a natural number $n$ is a sequence $\lambda = (\lambda_1 \geq \lambda_2 \geq \ldots \geq \lambda_k)$ such that $\sum_{i=1}^k \lambda_i = n$. In this case we write $\lambda \vdash n$ and refer to $n$ as the \emph{degree} of $\lambda$. This degree will be denoted by $|\lambda|$.

For every other natural number $d\geq |\lambda|+\lambda_1$ we define the \emph{padded partition}
\begin{equation}
\lambda(d) = (d-|\lambda|\geq \lambda_1 \ldots\geq \lambda_k) \vdash d
\end{equation}
By deleting the largest part of a partition, we see that every partition of $d$ is of the form $\lambda(d)$ for some partition $\lambda\vdash n < d$.

Recall that the partitions on $d$ are in one-to-one correspondence with the irreducible representations of $S_d$. Denote the corresponding irreducible representation by $V_{\lambda(d)}$.
\end{definition}

To consider the case $m>1$ recall that the irreducible representations of a product of finite groups $G\times H$ are given exactly by the pairs $V\boxtimes W$ where $V$ and $W$ are irreducible representations of $G$ and $H$ respectively. The symbol $\boxtimes$ is the usual tensor product on the underlying vector spaces $V$ and $W$ and the action of $G\times H$ on this product is defined by $(g,h).(v\otimes w) = g(v)\otimes h(w)$.
\begin{cor}
The irreducible representations of $S_{\bar{n}}=S_{n^{(1)}}\times\ldots\times S_{n^{(m)}}$ are precisely external tensor products of the form
$$
V_{\bar{\lambda}(\bar{n})} := V_{\lambda^{(1)}(n_1)} \boxtimes \ldots \boxtimes V_{\lambda^{(m)}(n_m)}
$$
where $|\lambda^{(i)}|+\lambda^{(i)}_1 \leq n^{(i)}$ for every $1\leq i\leq m$. Furthermore the character of such a product is given by the product of the individual characters.
\end{cor}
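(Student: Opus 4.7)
The plan is to reduce the corollary to the well-known classification of irreducibles of a direct product of finite groups, then combine it with the classical labeling of irreducibles of $S_n$ by partitions (rephrased via padded partitions).

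First I would invoke the standard fact from representation theory over $\C$: if $G$ and $H$ are finite groups, then the irreducible $\C[G\times H]$-modules are exactly the external tensor products $V\boxtimes W$ with $V$ an irreducible $\C[G]$-module and $W$ an irreducible $\C[H]$-module, and distinct pairs $(V,W)$ give non-isomorphic representations. This is proved using Schur's lemma together with a dimension count: $\sum (\dim V)^2 (\dim W)^2 = |G|\cdot|H| = |G\times H|$, which forces the list to be complete. Iterating this fact $m-1$ times gives the analogous classification for the $m$-fold product $S_{n^{(1)}}\times\cdots\times S_{n^{(m)}}$.

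Next I would match this classification to the padded-partition notation. For a single symmetric group $S_{n^{(i)}}$, the irreducibles are indexed by partitions of $n^{(i)}$. By the definition of padded partition, every partition of $n^{(i)}$ can be uniquely written as $\lambda^{(i)}(n^{(i)})$ for some partition $\lambda^{(i)}$ satisfying $|\lambda^{(i)}|+\lambda^{(i)}_1\leq n^{(i)}$ (one simply removes the largest part and reads off $\lambda^{(i)}$). Substituting this parameterization coordinatewise yields precisely the list $V_{\bar{\lambda}(\bar{n})}=V_{\lambda^{(1)}(n^{(1)})}\boxtimes\cdots\boxtimes V_{\lambda^{(m)}(n^{(m)})}$ of the corollary.

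Finally, the character formula follows immediately from the definition of the external tensor product: on a tuple $(\sigma_1,\ldots,\sigma_m)\in S_{\bar{n}}$, the element $(\sigma_1,\ldots,\sigma_m)$ acts on $v_1\otimes\cdots\otimes v_m$ via $\sigma_1(v_1)\otimes\cdots\otimes\sigma_m(v_m)$. Choosing a basis of $V_{\bar{\lambda}(\bar{n})}$ which is the tensor product of bases of the factors, the matrix of $(\sigma_1,\ldots,\sigma_m)$ is a Kronecker product, so its trace factors as
\begin{equation*}
\chi_{V_{\bar{\lambda}(\bar{n})}}(\sigma_1,\ldots,\sigma_m) \;=\; \prod_{i=1}^m \chi_{V_{\lambda^{(i)}(n^{(i)})}}(\sigma_i).
\end{equation*}
No step here is particularly hard; the only point requiring care is ensuring that the padded-partition reindexing really gives a bijection with all partitions of $n^{(i)}$, which is purely combinatorial.
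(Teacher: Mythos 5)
Your proposal is correct and follows the same route the paper takes: it recalls the classification of irreducibles of a product $G\times H$ as external tensor products $V\boxtimes W$, iterates it over the $m$ factors, and combines this with the padded-partition parameterization of irreducibles of each $S_{n^{(i)}}$, with the character formula coming from the Kronecker-product trace identity. You merely fill in the standard details (Schur's lemma plus dimension count, uniqueness of the padded-partition reindexing) that the paper treats as known.
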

Following this observation we define a $\boxtimes$ operation on $\FI$-modules.
\begin{definition}[\textbf{External tensor product}]
Let $(M^{(1)},\ldots, M^{(m)})$ be an $m$-tuple of $\FI$-modules. We define their external tensor product to be the $\FI^m$-module
\begin{equation}
\bar{M} = M^{(1)}\boxtimes \ldots \boxtimes M^{(m)}
\end{equation}
by composing the functor $(M^{(1)},\ldots, M^{(m)}): \FI^m \mor{} (\Mod{R})^m$ with the $m$-fold tensor product functor on $R$-modules.
\end{definition}
We then see that if $\bar{n}$ is any object, then the $S_{\bar{n}}$-representation $\bar{M}_{\bar{n}}$ is precisely the external tensor product $M^{(1)}_{n^{(1)}}\boxtimes \ldots \boxtimes M^{(m)}_{n^{(m)}}$. Consequently, the character of $\bar{M}$ is the product of the $\FI$-characters of the factors.

\begin{remark}\label{rem:tensor_commutes_with_induction}
It is also important to note that the external tensor operation commutes with the $\Ind$ functors in the following sense:
\begin{equation}
\Ind_{\bar{n}}(V^{(1)}\boxtimes \ldots \boxtimes V^{(m)}) \cong \Ind_{n^{(1)}}(V^{(1)})\boxtimes \ldots \boxtimes \Ind_{n^{(m)}}(V^{(m)}).
\end{equation}
This can be verified e.g. by considering the definition of $\Ind$ in Definition \ref{def:Ind_functor}, and using the associativity and commutativity of the tensor product. 
\end{remark}

\begin{thm}[\textbf{Relating $\FI^m$-modules to $\FI$-modules}] \label{thm:FIm_to_FI}
The following relationships hold between the representation theory of $\FI^m$ and that of $\FI$.
\begin{enumerate}
\item Every free $\FI^m$ module of degree $\leq \bar{n}$ is the direct sum of external tensor products of free $\FI$-modules, where the $i$-th component is of degree $\leq n^{(i)}$.

\item Every $\FI^m$-character polynomial of degree $\leq \bar{n}$ decomposes as a sum of products of $\FI$-character polynomials, where the $i$-th factor has degree ${\leq n^{(i)}}$.
\end{enumerate}
\end{thm}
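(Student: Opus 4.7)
The plan is to reduce both parts to the tensor-induction identity recorded in Remark \ref{rem:tensor_commutes_with_induction}, together with the standard decomposition of representations of a product group into external tensor products of irreducibles of the factors.

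For part (1), I would start with a free $\FI^m$-module, which by definition is a finite direct sum of induction modules $\Ind_{\bar{c}}(V)$, each with $\bar{c}\leq \bar{n}$ and $V$ some $S_{\bar{c}}$-representation. The representation theory of a product of finite groups tells us that every $S_{\bar{c}}=S_{c^{(1)}}\times\cdots\times S_{c^{(m)}}$-representation $V$ decomposes as a direct sum of external tensor products $V^{(1)}\boxtimes\cdots\boxtimes V^{(m)}$, where each $V^{(i)}$ is an irreducible $S_{c^{(i)}}$-representation. Applying $\Ind_{\bar{c}}$ to each summand and invoking Remark \ref{rem:tensor_commutes_with_induction} yields
\[
\Ind_{\bar{c}}(V^{(1)}\boxtimes\cdots\boxtimes V^{(m)})\cong \Ind_{c^{(1)}}(V^{(1)})\boxtimes\cdots\boxtimes \Ind_{c^{(m)}}(V^{(m)}),
\]
and each factor $\Ind_{c^{(i)}}(V^{(i)})$ is a free $\FI$-module of degree $c^{(i)}\leq n^{(i)}$ by the definition of the partial order on $\FI^m$. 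Reassembling all summands produces the claimed decomposition.

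For part (2), I would invoke Theorem \ref{thm:categorification} twice, in both directions. Every $\FI^m$-character polynomial of degree $\leq \bar{n}$ is the character of some virtual free $\FI^m$-module of degree $\leq \bar{n}$. By part (1) this virtual module is a $\C$-linear combination of external tensor products $M^{(1)}_\bullet\boxtimes\cdots\boxtimes M^{(m)}_\bullet$ of free $\FI$-modules with the required degree bounds. Evaluating at any object $\bar{d}$ and taking traces, the character of such an external tensor product on $S_{\bar{d}}$ is exactly the product $\chi_{M^{(1)}}\cdot\chi_{M^{(2)}}\cdots\chi_{M^{(m)}}$ of the individual $\FI$-characters (this factorization was noted immediately after the definition of $\boxtimes$ and follows from the corresponding factorization for irreducibles of a product group). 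By Theorem \ref{thm:categorification} applied to $\FI$, each $\chi_{M^{(i)}}$ is an $\FI$-character polynomial of degree $\leq n^{(i)}$, so the original polynomial is indeed a $\C$-linear combination of products of $\FI$-character polynomials of the appropriate degrees.

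I do not expect a serious obstacle. The only points requiring care are bookkeeping: first, that decomposing an arbitrary $S_{\bar{c}}$-representation into irreducible external products does not disturb the degree bound (since $\bar{c}$ itself, not the $V^{(i)}$, determines the degree of the induction module); and second, that passing from characters to virtual free modules and back via Theorem \ref{thm:categorification} preserves the coordinatewise degree bound $\bar{c}\leq \bar{n}\iff c^{(i)}\leq n^{(i)}$ for all $i$. Both are immediate from the definitions.
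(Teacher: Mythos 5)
Your proposal is correct and follows essentially the same route as the paper: decompose the inducing $S_{\bar{c}}$-representation into external tensor products of irreducibles, use the fact that $\Ind$ commutes with $\boxtimes$ (Remark \ref{rem:tensor_commutes_with_induction}) and with direct sums to prove part (1), and then deduce part (2) by passing through the categorification of character polynomials and the multiplicativity of characters under $\boxtimes$. The only difference is cosmetic — you track the degree bound $\bar{c}\leq\bar{n}$ slightly more explicitly than the paper does — so no further comment is needed.
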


\begin{remark}
In most related work on the representation theory of the category $\FI$, free modules are called projective or $\FI\#$-modules. See \cite{CEF} for the relevant definitions and a proof that these concepts are equivalent.
\end{remark}

\begin{proof}
We start with the first assertion. Let $\bar{\lambda} = (\lambda^{(1)},\ldots,\lambda^{(m)})$ be an $m$-tuple of partitions and $\bar{n}$ some $m$-tuple of natural numbers satisfying $n^{(i)} \geq |\lambda^{(i)}|+\lambda^{(i)}_1$ for all $i=1,\ldots, m$. We apply the fact that $\Ind$ commutes with external tensor products to the irreducible $S_{\bar{n}}$-representation
$$
V_{\bar{\lambda}(\bar{n})} = V_{\lambda^{(1)}(n^{(1)})} \boxtimes \ldots \boxtimes V_{\lambda^{(m)}(n^{(m)})}.
$$
This gives a presentation
$$
\Ind_{\bar{n}}(V_{\bar{\lambda}}(\bar{n})) \cong \Ind_{n^{(1)}}(V_{\lambda^{(1)}(n^{(1)})})\boxtimes \ldots \boxtimes \Ind_{n^{(m)}}(V_{\lambda^{(m)}(n^{(m)})})
$$
which proves the first assertion of the theorem for $\Ind_{\bar{n}}(V)$ when $V$ is irreducible.

For a general $S_{\bar{n}}$-representation $V$, decompose $V$ into irreducible subrepresentations $V = V_1\oplus \ldots \oplus V_r$. Since $\Ind$ commutes with direct sums, the induction module $\Ind_{\bar{n}}(V)$ is a direct sum of external tensor products of induction $\FI$-modules.

Lastly, the assertion applies to all free $\FI^m$-modules, since they are directs sum of induction modules of the form previously considered.

\smallskip As for the second assertion, a character polynomials of degree $\leq \bar{n}$ is a $k$-linear combination of the characters of free $\FI^m$-modules of degree $\leq \bar{n}$. By the first statement such a free module is the sum of external tensor products of free $\FI$-modules with the appropriate bounds on their degrees. But the character of an external tensor product is the product of the individual characters, which in the case of products of free $\FI$-modules are by definition $\FI$-character polynomials. Thus every $\FI^m$-character polynomial is indeed a $k$-linear combination of products of $\FI$-character polynomials with the appropriate bound on degree.
\end{proof}

Theorem \ref{thm:FIm_to_FI} allows us to give an explicit description of the character polynomials of $\FI^m$ is terms of cycle counting functions.
\begin{definition}[\textbf{Cycle counting functions}]
For every natural number $k$, let $X_k: \coprod_{n} S_n \mor{} \N$ be the simultaneous class function on the symmetric groups $$X_k(\sigma)= \# \text{ of $k$-cycles appearing in $\sigma$}.$$

On the products $S_{n^{(1)}}\times \ldots \times S_{n^{(m)}}$ we define a similar function $X_k^{(i)}$ by $$X_k^{(i)}(\sigma^{(1)},\ldots, \sigma^{(m)})= \# \text{ of $k$-cycles appearing in $\sigma^{(i)}$}.$$
\end{definition}
The study of polynomials in the class functions $X_k$ dates back to Frobenius, and they are what is classically known as \emph{character polynomials}. The following proposition shows that our definition of character polynomials generalizes this classical idea.
\begin{thm}[\textbf{Character polynomials of $\FI^m$}]
The filtered $\C$-algebra of character polynomials of $\FI^m$ coincides with the polynomial ring
$$
R = \C[X_1^{(1)},\ldots, X_1^{(m)}, X_2^{(1)},\ldots,X_2^{(m)},\ldots].
$$
where we define $\deg(X_k^{(i)})=(0,\ldots,k,\ldots,0)=k \bar{e}^{(i)}$.
\end{thm}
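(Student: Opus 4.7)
The plan is to set up a filtered $\C$-algebra homomorphism
$\Phi\colon R \to \{\FI^m\text{-character polynomials}\}$
sending each formal variable $X_k^{(i)}$ to the class function of the same name, and then check it is an isomorphism of filtered algebras. Both surjectivity and the filtration inequality will come from the reduction in Theorem \ref{thm:FIm_to_FI}(2), while injectivity is a direct algebraic-independence argument.

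First I would verify that each cycle-counting function $X_k^{(i)}$ is itself an $\FI^m$-character polynomial of degree $k\bar{e}^{(i)}$. The key observation is that $X_k^{(i)}$ is the pullback along the $i$-th coordinate projection $\FI^m \to \FI$ of the classical cycle-counter $X_k$ on $\FI$. By Example \ref{ex:FI_characters}, $X_k = \binom{X}{\mu_k}$ where $\mu_k \subset S_k$ is the conjugacy class of a single $k$-cycle, and directly from Definition \ref{def:char_poly} its pullback is the $\FI^m$-indicator $\binom{X}{\bar{\mu}}$ attached to the conjugacy class in $S_{k\bar{e}^{(i)}}$ supported in the $i$-th factor. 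Hence $\deg(X_k^{(i)}) = k\bar{e}^{(i)}$. Because character polynomials are closed under pointwise products with additive degree (the character-theoretic shadow of Theorem \ref{thm:intro-free-modules}(1)--(2)), the assignment on generators extends to a well-defined filtered $\C$-algebra homomorphism $\Phi$.

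For surjectivity and the upper bound on the filtration, I would apply Theorem \ref{thm:FIm_to_FI}(2): any $\FI^m$-character polynomial of degree $\leq \bar{n}$ is a $\C$-linear combination of products $\prod_{i=1}^m P^{(i)}$, with each $P^{(i)}$ pulled back from an $\FI$-character polynomial of degree $\leq n^{(i)}$ in the $i$-th coordinate. Example \ref{ex:FI_characters} rewrites every $\FI$-indicator $\binom{X}{\lambda}$ as $\prod_j \binom{X_j}{\lambda_j}$, which expands as a polynomial in the $X_j$ of degree $\leq |\lambda|$. Pulling back coordinatewise shows every such $P^{(i)}$ lies in $\C[X_1^{(i)},X_2^{(i)},\ldots]$ with the correct degree cap. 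Assembling, every $\FI^m$-character polynomial of degree $\leq \bar{n}$ is hit by a polynomial in $R$ supported on monomials $\prod_j X_{k_j}^{(i_j)}$ with $\sum_j k_j \bar{e}^{(i_j)} \leq \bar{n}$, giving surjectivity together with the compatibility of filtrations.

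For injectivity I would prove that the generators are algebraically independent as simultaneous class functions on $\coprod_{\bar{n}} S_{\bar{n}}$. Suppose $Q \in R$ vanishes identically. For each coordinate $i$ independently, by choosing $n^{(i)}$ large and $\sigma^{(i)} \in S_{n^{(i)}}$ with any prescribed cycle type, the tuple $(X_k^{(i)}(\sigma^{(i)}))_{k \geq 1}$ can be made to realize any eventually-zero tuple of non-negative integers. Combining independent choices across $i=1,\ldots,m$ produces evaluations of $Q$ on a Zariski-dense subset of $\prod_{i,k} \C$, forcing $Q=0$. I expect the main obstacle to be the bookkeeping for the filtration: one must ensure that expressing an $\FI^m$-polynomial of degree $\leq \bar{n}$ only invokes generators $X_k^{(i)}$ with $k\bar{e}^{(i)} \leq \bar{n}$ (i.e.\ $k \leq n^{(i)}$), which is exactly what the degree bound in Theorem \ref{thm:FIm_to_FI}(2) combined with Example \ref{ex:FI_characters} provides; the algebraic-independence and surjectivity arguments themselves are essentially formal once this bookkeeping is in place.
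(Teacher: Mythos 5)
Your proposal is correct and follows essentially the same route as the paper: you identify each $X_k^{(i)}$ as the indicator $\binom{X}{\mu}$ of a $k$-cycle class concentrated in the $i$-th coordinate, and you obtain the reverse inclusion and the degree bookkeeping from Theorem \ref{thm:FIm_to_FI}(2) together with the expansion $\binom{X}{\mu}=\prod_j\binom{X_j}{\mu_j}$ of Example \ref{ex:FI_characters}, exactly as in the paper's proof. The only genuine addition is your algebraic-independence (injectivity) argument via evaluating on arbitrary cycle types, which the paper leaves implicit; it is a correct and harmless supplement justifying the phrase ``polynomial ring'' literally.
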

\begin{proof}
We first prove this when $m=1$. For the inclusion $R\subseteq \Char_{\FI}$ we show that for every $k$ the function $X_k$ is indeed a character polynomial. Recall that in Example \ref{ex:FI_characters} we showed that for every cycle type $\mu = (\mu_1,\ldots,\mu_k)$ the associated character polynomial satisfies
\begin{equation}
\binom{X}{\mu}(\sigma) = \binom{X_1(\sigma)}{\mu_1}\ldots \binom{X_k(\sigma)}{\mu_k}.
\end{equation}
Thus by taking $\mu_k=1$ and $\mu_j=0$ for all $j\neq k$ we get a character polynomial
\begin{equation}
\binom{X}{\mu}(\sigma) = \binom{X_k(\sigma)}{1} = X_k(\sigma).
\end{equation}

\smallskip For the reverse inclusion, one can construct the right-hand side of
\begin{equation}
\binom{X}{\mu} = \binom{X_1}{\mu_1}\ldots \binom{X_k}{\mu_k}
\end{equation}
in the algebra generated by $X_1,X_2,\ldots$, thus realizing every generator $\binom{X}{\mu}$ of $\Char_{\FI}$. This concludes the proof in the case $m=1$.

For $m>1$, Theorem \ref{thm:FIm_to_FI} states that every $\FI^m$-character polynomial is a linear combination of external products of $\FI$-character polynomials. We saw that the latter class of functions is precisely the ring of polynomials in $X_1,X_2,\ldots$. The function $X_k^{(i)}$ is the external product of $X_k$ in the $i$-th coordinate with $1$'s in all other coordinates, and thus polynomials in $X_k^{(i)}$ clearly generate all linear combinations of external products of $X_1,X_2,\ldots$. This proves the claim.
\end{proof}

Our general theory of stabilization for inner products thus applies to expressions involving the functions $X_k^{(i)}$.
\begin{cor}[\textbf{Stabilization of inner products}] \label{cor:FI_product_stability}
The $S_{\bar{n}}$-inner product of two polynomials $P,Q\in \C[X^{(i)}_k: k\in \N,\, 1\leq i\leq m]$ does not depend on $\bar{n}$ for all $\bar{n}\geq \deg(P)+\deg(Q)$, where the degree of $X^{(i)}_k$ is $k \bar{e}^{(i)}$ and addition of degrees is defined coordinatewise.
\end{cor}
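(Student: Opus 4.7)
The plan is to deduce this as a direct corollary of the general inner product stabilization established for categories of $\FI$ type in \S\ref{sec:coinvariants}, specialized to $\CCat = \FI^m$. The preceding theorem identifies the algebra of $\FI^m$-character polynomials with the polynomial ring $\C[X^{(i)}_k]$, so any $P$ and $Q$ in this ring are $\FI^m$-character polynomials in the sense of Definition \ref{def:char_poly}. What remains is to verify that the polynomial-ring notion of degree, in which $\deg(X^{(i)}_k) = k\bar{e}^{(i)}$ and degrees are additive under multiplication, bounds the character-polynomial degree in the categorical sense, after which the general stabilization result applies directly.

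To verify the degree-compatibility, I would first observe that each generator $X^{(i)}_k$ has categorical degree $\leq k\bar{e}^{(i)}$. Indeed, $X_k = \binom{X}{\mu}$ on $\FI$ for $\mu$ the cycle type consisting of a single $k$-cycle, so $X_k$ has $\FI$-character-polynomial degree $\leq k$. Passing to $\FI^m$, the function $X^{(i)}_k$ is the external tensor product of $X_k$ placed in the $i$-th coordinate with the constant function $1$ in all other coordinates; by Remark \ref{rem:tensor_commutes_with_induction} (which shows $\boxtimes$ commutes with $\Ind$) its categorifying free $\FI^m$-module is supported on objects with zero in all but the $i$-th coordinate and degree $k$ there, giving categorical degree $\leq k\bar{e}^{(i)}$. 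Then by Theorem \ref{thm:intro-free-modules}(2), which asserts additivity of degree under tensor products, any monomial $\prod_j (X^{(i_j)}_{k_j})^{a_j}$ has categorical degree $\leq \sum_j a_j k_j \bar{e}^{(i_j)}$, and by linearity the same bound holds for any polynomial in the $X^{(i)}_k$ with degree assigned as in the statement.

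With degree compatibility in hand, the result is immediate from the general stabilization of inner products: if $P$ and $Q$ are $\FI^m$-character polynomials of categorical degrees $\leq c_1$ and $\leq c_2$, then $\langle P,Q\rangle_{S_{\bar{n}}}$ is independent of $\bar{n}$ for all $\bar{n} \geq c_1 + c_2$. For $\FI^m$ the addition on objects reduces to coordinatewise addition of natural numbers (as remarked after Definition \ref{def:sums}, since $\FI^m$ has weak push-outs computed coordinatewise from $\FI$ and the empty tuple is an initial object componentwise), so $c_1 + c_2$ agrees with the sum of the stated polynomial-ring degrees.

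There is no real obstacle here; the proof is essentially bookkeeping matching two notions of degree. The substantive content, closure of free modules under tensor products with additive degree and stabilization of coinvariants/inner products, has already been established in Theorem \ref{thm:intro-free-modules} and Theorem \ref{thm:coinvariant_stabilization} for arbitrary categories of $\FI$ type, so specializing to $\FI^m$ requires only the translation step above.
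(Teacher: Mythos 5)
Your proposal is correct and follows essentially the same route as the paper, which presents this corollary as an immediate application of the general inner-product stabilization for categories of $\FI$ type once the filtered algebra of $\FI^m$-character polynomials has been identified with $\C[X^{(i)}_k]$ (with $\deg X^{(i)}_k = k\bar{e}^{(i)}$) in the preceding theorem. Your explicit verification that the polynomial-ring degree bounds the categorical degree, and that object addition in $\FI^m$ is coordinatewise, is exactly the bookkeeping the paper leaves implicit.
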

In the case $\CCat=\FI$ this result is proved in \cite[Theorem 3.9]{CEF-pointcounts} via a direct calculation of the $S_n$-inner products. The $\CCat=\FI_{\Z/2\Z}$-analog is proved in \cite{Wi}. When $G$ is any other finite group, a non-effective analog of Corollary \ref{cor:FI_product_stability} for $\CCat=\FI_G$ is implicit in \cite[Theorem 3.2.2]{SS-FIG}.

\bigskip We turn to discussing representation stability for $\FI^m$. First consider the case $m=1$: the irreducible representations of symmetric groups of different orders are naturally related in the following sense.
\begin{fact}[\textbf{The modules $V_{\lambda(\bullet)}$}]
For every partition $\lambda \vdash |\lambda|$, there exists an $\FI$-submodule of $\Ind_{|\lambda|}(V_{\lambda})$, which we will denote by $V_{\lambda(\bullet)}$, whose value at every $d\geq |\lambda|+\lambda_1$ is isomorphic to the irreducible $S_d$-representation $V_{\lambda(d)}$. Moreover, for every partition $\lambda$ there exists a character polynomial $P_\lambda$ of degree $|\lambda|$ such that the character of $V_{\lambda(\bullet)}$ coincides with $P_\lambda$ on $S_d$ for all $d \geq |\lambda|+\lambda_1$.

See \cite{CEF} for the existence of $V_{\lambda(\bullet)}$ and \cite[Example I.7.14]{Mac} for $P_{\lambda}$.
\end{fact}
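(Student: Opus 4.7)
The plan is to extract both the $\FI$-module $V_{\lambda(\bullet)}$ and its character polynomial from the free module $\Ind_{|\lambda|}(V_\lambda)$, by combining Pieri's branching rule with Frobenius's character formula. By the isomorphism of Equation \ref{eq:ind_decomposition} applied to the category $\FI$, the value $\Ind_{|\lambda|}(V_\lambda)_d$ coincides with the induced representation $\Ind_{S_{|\lambda|}\times S_{d-|\lambda|}}^{S_d}(V_\lambda \boxtimes \operatorname{triv})$. Pieri's rule decomposes this as $\bigoplus_\mu V_\mu$, indexed by partitions $\mu \vdash d$ such that $\mu/\lambda$ is a horizontal strip, with each $V_\mu$ appearing once. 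A direct check on the skew shape shows that $\lambda(d)/\lambda$ places at most one cell per column whenever $d \geq |\lambda| + \lambda_1$, so $V_{\lambda(d)}$ appears with multiplicity exactly one. I would then define $V_{\lambda(\bullet)}$ pointwise as the $V_{\lambda(d)}$-isotypic component of $\Ind_{|\lambda|}(V_\lambda)_d$.

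The main obstacle is verifying that these pointwise isotypic pieces glue into a bona fide $\FI$-submodule: every structure morphism $d \hookrightarrow e$ of $\FI$ must carry the $V_{\lambda(d)}$-isotypic component into the $V_{\lambda(e)}$-isotypic component of $\Ind_{|\lambda|}(V_\lambda)_e$. To see this I would restrict the Pieri decomposition at $e$ along the parabolic inclusion $S_d \hookrightarrow S_e$ and match irreducible labels: among the branching rule constituents of $V_{\lambda(e)} \downarrow S_d$, only the $V_{\lambda(d)}$-isotypic piece can be the image of the isotypic piece at $d$, by the stability of partition labels in the branching. This is essentially the argument used in \cite{CEF} to produce the representation-stable sequences indexed by partitions, and it is the one non-formal input required.

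For the character polynomial $P_\lambda$, I would apply the Frobenius character formula to express $\chi_{\lambda(d)}(\sigma)$, for $\sigma \in S_d$ with cycle counts $(X_k(\sigma))_k$, as a specific coefficient in the symmetric expression $\prod_k p_k^{X_k(\sigma)}\cdot \prod_{i<j}(x_i - x_j)$. Once $d \geq |\lambda| + \lambda_1$, the long first part $d - |\lambda|$ of $\lambda(d)$ plays the role of a ``free variable'': the resulting coefficient can be rewritten as a polynomial in the $X_k$'s of weighted degree $|\lambda|$ (with $\deg X_k = k$) whose coefficients are independent of $d$. This rewriting is carried out explicitly in \cite[Example I.7.14]{Mac}, and the polynomial it produces is $P_\lambda$. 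Its agreement with the character of $V_{\lambda(\bullet)}$ in the stable range is then immediate from pointwise equality on each $S_d$.
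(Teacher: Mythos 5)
Your proposal is correct, and it is essentially the argument underlying the sources the paper itself relies on for this Fact (the paper gives no proof, only citations to \cite{CEF} for $V_{\lambda(\bullet)}$ and to \cite[Example I.7.14]{Mac} for $P_\lambda$): Pieri's rule applied to $\Ind_{|\lambda|}(V_\lambda)_d \cong \Ind_{S_{|\lambda|}\times S_{d-|\lambda|}}^{S_d}\left(V_\lambda\boxtimes \operatorname{triv}\right)$ gives the multiplicity-one occurrence of $V_{\lambda(d)}$, and Macdonald's computation produces $P_\lambda$ as a polynomial in the cycle-counting functions $X_k$ of weighted degree $|\lambda|$, which by Example \ref{ex:FI_characters} is a character polynomial of degree $|\lambda|$ in the paper's sense.

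One step of your gluing argument is phrased in a misdirected way and should be tightened. To show that an $\FI$-morphism $d\rightarrow e$ carries the $V_{\lambda(d)}$-isotypic piece into the $V_{\lambda(e)}$-isotypic piece, it does not suffice to inspect the constituents of $\Res^{S_e}_{S_d}V_{\lambda(e)}$; what is needed is the complementary vanishing statement that for every other Pieri constituent $V_\mu$ of the value at $e$ (so $\mu\vdash e$, $\mu/\lambda$ a horizontal strip, $\mu\neq\lambda(e)$) one has $\Hom_{S_d}\left(V_{\lambda(d)},\Res^{S_e}_{S_d}V_\mu\right)=0$, since then the projection of the image onto every other isotypic component is an $S_d$-map that must vanish. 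This is true and easy to check: $V_{\lambda(d)}$ occurs in $\Res^{S_e}_{S_d}V_\mu$ only if $\lambda(d)\subseteq\mu$, i.e. $\mu_{i+1}\geq\lambda_i$ for all $i\geq 1$, while the horizontal-strip condition gives the interlacing $\mu_{i+1}\leq\lambda_i$; hence $\mu_{i+1}=\lambda_i$ for all $i$ and then $\mu_1=e-|\lambda|$ by size, forcing $\mu=\lambda(e)$ (here $d\geq|\lambda|+\lambda_1$ is used so that $\lambda(d)$ is a partition). Two further housekeeping points complete the construction: an arbitrary injection $d\rightarrow e$ should be factored as the standard inclusion followed by an automorphism of $e$, the latter preserving isotypic components; and in degrees $d<|\lambda|+\lambda_1$ one simply takes the submodule to be $0$, so closure under all structure maps is automatic.
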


This fact extends to all $m>1$ via the external tensor product.
\begin{cor}[\textbf{The modules $V_{\bar{\lambda}(\bullet)}$}]
For every $m$-tuple of partitions $\bar{\lambda} = (\lambda^{(1)},\ldots,\lambda^{(m)})$ there exists an $\FI^m$-submodule of $\Ind_{|\bar{\lambda}|}(V_{\bar{\lambda}})$, which we will denote by $V_{\bar{\lambda}(\bullet)}$, whose value at $\bar{n}$ is the $S_{\bar{n}}$-irreducible representation 
$$
V_{\bar{\lambda}(\bar{n})}:=V_{\lambda^{(1)}(n^{(1)})} \boxtimes \ldots \boxtimes V_{\lambda^{(m)}(n^{(m)})}
$$
for all $\bar{n}\geq |\bar{\lambda}|+\bar{\lambda}_1$. Here $|\bar{\lambda}|$ is the $m$-tuple $(|\lambda^{(1)}|,\ldots, |\lambda^{(m)}|)$, the expression $\bar{\lambda}_1$ is $(\lambda^{(1)}_1,\ldots,\lambda^{(m)}_1)$ and $+$ coincides with coordinatewise addition.

Moreover, the character of $V_{\bar{\lambda}(\bullet)}$ coincides with the character polynomial ${P_{\bar{\lambda}}:=P_{\lambda^{(1)}}\cdot \ldots \cdot P_{\lambda^{(m)}}}$ of degree $|\bar{\lambda}|$.
\end{cor}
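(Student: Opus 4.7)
The plan is to reduce everything to the $m=1$ case by defining $V_{\bar{\lambda}(\bullet)}$ as the external tensor product of the known $\FI$-modules. Concretely, I would set
\[
V_{\bar{\lambda}(\bullet)} := V_{\lambda^{(1)}(\bullet)} \boxtimes \ldots \boxtimes V_{\lambda^{(m)}(\bullet)},
\]
where each factor $V_{\lambda^{(i)}(\bullet)}\subseteq \Ind_{|\lambda^{(i)}|}(V_{\lambda^{(i)}})$ is supplied by the cited Fact. This is an $\FI^m$-module by construction.

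To realize $V_{\bar{\lambda}(\bullet)}$ as a submodule of $\Ind_{|\bar{\lambda}|}(V_{\bar{\lambda}})$, I would combine two observations. First, the componentwise tensor product preserves injections over $\C$, so the inclusions of each factor assemble into an injection
\[
V_{\bar{\lambda}(\bullet)} \hookrightarrow \Ind_{|\lambda^{(1)}|}(V_{\lambda^{(1)}})\boxtimes \ldots \boxtimes \Ind_{|\lambda^{(m)}|}(V_{\lambda^{(m)}}).
\]
Second, Remark \ref{rem:tensor_commutes_with_induction} identifies the target with $\Ind_{|\bar{\lambda}|}(V_{\bar{\lambda}})$, since $V_{\bar{\lambda}} = V_{\lambda^{(1)}}\boxtimes \ldots \boxtimes V_{\lambda^{(m)}}$ by definition. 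Composing, we obtain the desired submodule inclusion.

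Next I would verify the evaluation claim. For any $\bar{n}$ with $\bar{n}\geq |\bar{\lambda}|+\bar{\lambda}_1$ coordinatewise, we have $n^{(i)}\geq |\lambda^{(i)}|+\lambda^{(i)}_1$ for each $i$, so by the $m=1$ Fact each factor evaluates to the irreducible $V_{\lambda^{(i)}(n^{(i)})}$. By the definition of $\boxtimes$ on $\FI^m$-modules,
\[
(V_{\bar{\lambda}(\bullet)})_{\bar{n}} \;=\; V_{\lambda^{(1)}(n^{(1)})} \boxtimes \ldots \boxtimes V_{\lambda^{(m)}(n^{(m)})} \;=\; V_{\bar{\lambda}(\bar{n})},
\]
which is the stated irreducible $S_{\bar{n}}$-representation.

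For the character statement, I would invoke the fact recorded just before Remark \ref{rem:tensor_commutes_with_induction} that the character of an external tensor product is the product of the factors' characters. Combined with the $m=1$ Fact that $V_{\lambda^{(i)}(\bullet)}$ has character $P_{\lambda^{(i)}}$ on $S_{n^{(i)}}$ for $n^{(i)}\geq |\lambda^{(i)}|+\lambda^{(i)}_1$, this yields $\chi_{V_{\bar{\lambda}(\bullet)}} = P_{\lambda^{(1)}}\cdot \ldots \cdot P_{\lambda^{(m)}} = P_{\bar{\lambda}}$ on $S_{\bar{n}}$ for all $\bar{n}\geq |\bar{\lambda}|+\bar{\lambda}_1$. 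The degree in the $\FI^m$-grading is additive under pointwise product, giving $\sum_i |\lambda^{(i)}|\bar{e}^{(i)}=|\bar{\lambda}|$ as claimed. The argument presents no real obstacle — the only subtlety is the observation that $\boxtimes$ preserves submodules, which is immediate over a field. All the genuine content has been absorbed into the $m=1$ Fact and into Remark \ref{rem:tensor_commutes_with_induction}.
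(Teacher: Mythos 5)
Your proposal is correct and matches the paper's intended argument: the paper offers no separate proof beyond the remark that the $m=1$ Fact ``extends to all $m>1$ via the external tensor product,'' which is exactly your construction of $V_{\bar{\lambda}(\bullet)}$ as $V_{\lambda^{(1)}(\bullet)}\boxtimes\cdots\boxtimes V_{\lambda^{(m)}(\bullet)}$, embedded in $\Ind_{|\bar{\lambda}|}(V_{\bar{\lambda}})$ via Remark \ref{rem:tensor_commutes_with_induction}, with characters multiplying. No gaps; your handling of the submodule inclusion and the stable range $\bar{n}\geq|\bar{\lambda}|+\bar{\lambda}_1$ is exactly what the paper leaves implicit.
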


These sequences of irreducible representations allow us to formulate the notion of representation stability for free $\FI^m$-modules.
\begin{thm}[\textbf{Representation stability for $\FI^m$}] \label{thm:rep_stability}
Suppose $F_\bullet$ is a free $\FI^m$-module that is finitely-generated in degree $\leq \bar{n}$. Then there exist $m$-tuples of partitions $\bar{\lambda}_1,\ldots, \bar{\lambda}_k$, satisfying $|\bar{\lambda}_j|\leq \bar{n}$ for all $j=1,\ldots,k$, such that for all $\bar{d}\geq 2\times \bar{n} = (2n^{(1)},\ldots,2n^{(m)})$ the $S_{\bar{d}}$-module $F_{\bar{d}}$ decomposes into irreducibles as
$$
F_{\bar{d}} \cong (V_{\bar{\lambda}_1(\bar{d})})^{r_1}\oplus \ldots \oplus (V_{\bar{\lambda}_k(\bar{d})})^{r_k}
$$
and the multiplicities $r_1,\ldots,r_k$ do not depend on $\bar{d}$.
\end{thm}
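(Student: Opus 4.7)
The plan is to reduce the claim to classical representation stability for free $\FI$-modules, applied in each coordinate separately. First I would invoke Theorem \ref{thm:FIm_to_FI}(1) to express $F_\bullet$ as a finite direct sum of external tensor products $F^{(1)} \boxtimes \cdots \boxtimes F^{(m)}$, where each $F^{(i)}$ is a free $\FI$-module of degree $\leq n^{(i)}$. By additivity of the decomposition into irreducibles, it suffices to treat a single such external tensor product; the general case then follows by summing over the finitely many summands.

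The heart of the argument is the one-variable input. For each coordinate $i$, the Church--Ellenberg--Farb representation stability theorem for free $\FI$-modules would produce partitions $\lambda^{(i)}_1, \ldots, \lambda^{(i)}_{k_i}$ with $|\lambda^{(i)}_j| \leq n^{(i)}$ and nonnegative integers $r^{(i)}_j$ such that for all $d^{(i)} \geq 2 n^{(i)}$,
$$F^{(i)}_{d^{(i)}} \;\cong\; \bigoplus_{j=1}^{k_i} \left(V_{\lambda^{(i)}_j(d^{(i)})}\right)^{r^{(i)}_j},$$
with multiplicities $r^{(i)}_j$ independent of $d^{(i)}$.

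To finish, I would use the fact that the irreducible representations of $S_{\bar{d}}$ are precisely the external tensor products $V_{\bar{\lambda}(\bar{d})} = V_{\lambda^{(1)}(d^{(1)})} \boxtimes \cdots \boxtimes V_{\lambda^{(m)}(d^{(m)})}$, together with distributivity of $\boxtimes$ over direct sums. Taking the $m$-fold external tensor product of the coordinatewise decompositions produces
$$F_{\bar{d}} \;\cong\; \bigoplus_{\bar{j}=(j_1,\ldots,j_m)} \left(V_{\bar{\lambda}_{\bar{j}}(\bar{d})}\right)^{r^{(1)}_{j_1}\cdots r^{(m)}_{j_m}},$$
where $\bar{\lambda}_{\bar{j}} = (\lambda^{(1)}_{j_1}, \ldots, \lambda^{(m)}_{j_m})$ satisfies $|\bar{\lambda}_{\bar{j}}| \leq \bar{n}$ coordinatewise. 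The multiplicities $r^{(1)}_{j_1}\cdots r^{(m)}_{j_m}$ are independent of $\bar{d}$ provided each $d^{(i)} \geq 2 n^{(i)}$, i.e.\ $\bar{d} \geq 2\bar{n}$, as required.

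The main obstacle is the one-variable input: the full stability theorem for free $\FI$-modules over $\C$, together with its stable range $d \geq 2n$. Granting this as an external ingredient from \cite{CEF}, the step from $m=1$ to $m>1$ is essentially formal, resting on Theorem \ref{thm:FIm_to_FI}(1) (which itself uses Remark \ref{rem:tensor_commutes_with_induction}) and on the elementary fact that irreducibles of a product of finite groups are external tensor products of irreducibles of the factors.
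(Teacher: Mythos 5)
Your proposal is correct and follows essentially the same route as the paper: reduce via Theorem \ref{thm:FIm_to_FI}(1) to external tensor products of free $\FI$-modules, invoke the Church--Ellenberg--Farb stability theorem for free $\FI$-modules in each coordinate, and reassemble using the fact that irreducibles of $S_{\bar{d}}$ are external tensor products of irreducibles of the factors. Your write-up merely spells out more explicitly (via the product-of-multiplicities formula) what the paper states in one line, so there is nothing to add.
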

\begin{note}
The original definition of Representation Stability given in \cite{CF} includes additional injectivity and surjectivity conditions on top of the stabilization of irreducible decompositions. We will not discuss these aspects of the definition, although the reader familiar with them will readily notice that they are immediately satisfied by all free $\CCat$-modules.
\end{note}

\begin{proof} [Proof of Theorem \ref{thm:rep_stability}]
The case $m=1$ asserts the representation stability of free finitely-generated $\FI$-modules. This follows directly from the Branching rule for inducing representations of the symmetric group (see \cite{FH}), and is proved in \cite[Theorem 1.13]{CEF}.

For $m>1$ the statement follows from Theorem \ref{thm:FIm_to_FI} using the corresponding statement in the case $m=1$. Since every free $\FI^m$-module $M_\bullet$ is a sum of the external tensor products of free $\FI$-modules, and each of those decomposes as a stabilizing direct sum of irreducibles, the same is true for $M_\bullet$.
\end{proof}

At the level of character polynomials Theorem \ref{thm:rep_stability} translates to the following orthonormality statement.
\begin{cor}[\textbf{Spectral orthonormal basis for character polynomials}] \label{cor:orthonormal_basis}
The character polynomials
$$
\left\{ P_{\bar{\lambda}}:=P_{\lambda_1}\cdot \ldots \cdot P_{\lambda_m} \right\}_{|\bar{\lambda}|\leq \bar{n}}
$$
form an orthonormal basis for all $\FI^m$-character polynomials of degree $\leq \bar{n}$ with respect to the inner product
\begin{equation}
\langle P, Q\rangle = \lim_{\bullet\rightarrow \infty} \langle P, Q\rangle_{\bullet} = \langle P, Q\rangle_{\deg(P)+\deg(Q)}.
\end{equation}
\end{cor}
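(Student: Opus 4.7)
The plan is to reduce the statement to Schur orthogonality for irreducible representations of $S_{\bar{d}}$ applied at a sufficiently large object $\bar{d}$, combined with the representation stability statement of Theorem \ref{thm:rep_stability} and the categorification of character polynomials from Theorem \ref{thm:categorification}.

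First, I would verify that the family $\{P_{\bar{\lambda}}\}_{|\bar{\lambda}|\leq \bar{n}}$ spans the space of $\FI^m$-character polynomials of degree $\leq \bar{n}$. By Theorem \ref{thm:categorification}, any such character polynomial $P$ is the character of some virtual free $\FI^m$-module $F_\bullet$ of degree $\leq \bar{n}$. Theorem \ref{thm:rep_stability} then implies that for all sufficiently large $\bar{d}$, the $S_{\bar{d}}$-representation $F_{\bar{d}}$ decomposes as a $\C$-linear combination of irreducibles $V_{\bar{\lambda}(\bar{d})}$ with $|\bar{\lambda}|\leq \bar{n}$, with multiplicities independent of $\bar{d}$. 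Taking characters, and recalling that the character of $V_{\bar{\lambda}(\bullet)}$ is stably equal to $P_{\bar{\lambda}}$, we conclude that $P$ is a $\C$-linear combination of the $P_{\bar{\lambda}}$ with $|\bar{\lambda}|\leq \bar{n}$.

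Next, I would establish orthonormality. Fix two $m$-tuples of partitions $\bar{\lambda}$ and $\bar{\mu}$ with $|\bar{\lambda}|, |\bar{\mu}|\leq \bar{n}$. Since $\deg(P_{\bar{\lambda}})= |\bar{\lambda}|$ and similarly for $\bar{\mu}$, the stabilization of inner products (Corollary \ref{cor:FI_product_stability}) guarantees that $\langle P_{\bar{\lambda}}, P_{\bar{\mu}}\rangle_{S_{\bar{d}}}$ is independent of $\bar{d}$ once $\bar{d}\geq |\bar{\lambda}|+|\bar{\mu}|$. For any such $\bar{d}$ that also satisfies $\bar{d}\geq |\bar{\lambda}|+\bar{\lambda}_1$ and $\bar{d}\geq |\bar{\mu}|+\bar{\mu}_1$, the values $P_{\bar{\lambda}}(\bar{d})$ and $P_{\bar{\mu}}(\bar{d})$ are the characters of the honest irreducible $S_{\bar{d}}$-representations $V_{\bar{\lambda}(\bar{d})}$ and $V_{\bar{\mu}(\bar{d})}$. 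The padded partition $\bar{\lambda}(\bar{d})$ determines $\bar{\lambda}$ uniquely (by stripping the largest part in each coordinate), so when $\bar{\lambda}\neq \bar{\mu}$ these irreducibles are inequivalent, and Schur orthogonality yields $\langle P_{\bar{\lambda}}, P_{\bar{\mu}}\rangle_{S_{\bar{d}}}=0$, while if $\bar{\lambda}=\bar{\mu}$ the value is $1$. This gives orthonormality in the limit.

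The spanning statement together with orthonormality shows that the $\{P_{\bar{\lambda}}\}_{|\bar{\lambda}|\leq\bar{n}}$ are a linearly independent spanning set, hence an orthonormal basis. The only real subtlety is bookkeeping the different degree bounds: the stable range for $\langle P_{\bar{\lambda}}, P_{\bar{\mu}}\rangle$ provided by Corollary \ref{cor:FI_product_stability} is $\bar{d}\geq |\bar{\lambda}|+|\bar{\mu}|$, which may be smaller than the bound $\bar{d}\geq \max(|\bar{\lambda}|+\bar{\lambda}_1, |\bar{\mu}|+\bar{\mu}_1)$ needed to identify $P_{\bar{\lambda}}(\bar{d})$ with the character of an actual irreducible. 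The argument is only needed at some single sufficiently large $\bar{d}$ dominating both bounds, and once the value stabilizes at such a $\bar{d}$ it equals the stable value for all smaller $\bar{d}\geq |\bar{\lambda}|+|\bar{\mu}|$ as well. I do not anticipate any other obstacle.
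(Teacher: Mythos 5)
Your argument is correct and is essentially the paper's intended derivation: the paper states this corollary as a direct translation of Theorem \ref{thm:rep_stability} (via Theorem \ref{thm:categorification}, the stable identification of the character of $V_{\bar{\lambda}(\bullet)}$ with $P_{\bar{\lambda}}$, Schur orthogonality at a single sufficiently large $\bar{d}$, and the stabilization of inner products), without spelling out the details. Your write-up simply makes explicit the bookkeeping of stable ranges that the paper leaves implicit, so there is nothing to correct.
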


\end{document}